\documentclass[12pt]{amsart}
\usepackage[english]{babel}
\usepackage{amsmath,amssymb, amsthm,amsbsy} 

\usepackage{graphicx}
\usepackage{geometry}
\DeclareGraphicsRule{.tif}{png}{.png}{`convert #1 `basename #1 .tif`.png}

\usepackage{hyperref}  
\hypersetup{colorlinks=true, linkcolor=blue, anchorcolor=blue, 
citecolor=red, filecolor=blue, menucolor=blue,
urlcolor=blue}

\numberwithin{equation}{section}
\newtheorem{theorem}{Theorem}[section]
\newtheorem{corollary}[theorem]{Corollary}
\newtheorem{proposition}[theorem]{Proposition}
\newtheorem{lemma}[theorem]{Lemma}
\theoremstyle{remark}
\newtheorem{remark}{Remark}[section]

\theoremstyle{definition}
\newtheorem{definition}[theorem]{Definition}


\newcommand{\R}{\mathbb{R}}
\newcommand{\C}{\mathbb{C}}
\newcommand{\N}{\mathbb{N}}
\newcommand{\Z}{\mathbb{Z}}
\newcommand{\M}{\mathcal{M}}
\newcommand{\K}{\mathbb{K}}

\newcommand{\lap}{\Delta}

\newcommand{\an}[1]{\langle #1 \rangle}

\def\XXint#1#2#3{{\setbox0=\hbox{$#1{#2#3}{\int}$ }
\vcenter{\hbox{$#2#3$ }}\kern-.58\wd0}}


\begin{document}

\title
[Strichartz on curved space]
{Global Strichartz estimates for the Dirac equation on symmetric spaces}

\begin{abstract}
In this paper we study global-in-time, weighted Strichartz estimates for the Dirac equation on warped product spaces in dimension $n\geq3$. In particular, we prove estimates for the dynamics restricted to eigenspaces of the Dirac operator on the compact spin manifolds defining the ambient manifold under some explicit sufficient condition on the metric,  and estimates with loss of angular derivatives for general initial data in the setting of spherically symmetric and asymptotically flat manifolds.

\end{abstract}

\date{\today}

\author{J. Ben-Artzi}
\address{School of Mathematics,
Cardiff University, Senghennydd Road, Cardiff CF24 4AG, Wales, United Kingdom}
\email{Ben-ArtziJ@cardiff.ac.uk}

\author{F. Cacciafesta}
\address{Dipartimento di Matematica, Universit\'a degli studi di Padova, Via Trieste, 63, 35131 Padova PD, Italy}
\email{cacciafe@math.unipd.it}

\author{A. S. de Suzzoni} 
\address{CMLS, \'Ecole Polytechnique, CNRS, Universit\'e Paris- Saclay, 91128 PALAISEAU Cedex, France}
\email{ anne-sophie.de-suzzoni@polytechnique.edu}

\author{J. Zhang}
\address{Department of Mathematics, Beijing Institute of Technology, Beijing
100081, China}
\email{zhang\_junyong@bit.edu.cn}

\thanks{JBA  acknowledges support from an Engineering and Physical Sciences Research Council Fellowship (EP/N020154/1). FC acknowledges support from the University of Padova STARS project ``Linear and Nonlinear Problems for the Dirac Equation" (LANPDE). ASdS acknowledges support from ESSED ANR-18-CE40-0028. JZ acknowledges support from National Natural
Science Foundation of China (11771041, 11831004) and a Marie Sk\l odowska-Curie Fellowship (790623).}
\keywords{Conical singular space, Dirac equation, Strichartz estimates, Local smoothing estimates}
\subjclass[2010]{42B37, 35Q41, 35A27}

\maketitle


\section{The radial Dirac Equation on symmetric manifolds}\label{radsec}

In \cite{cacdes1,cacdescurv1} the second and third authors have started the study of the dynamics of the Dirac equation on curved spaces, the natural setting being a $4$-dimensional manifold $(\mathcal{M},g)$ with signature $\{+,-,-,-\}$ that decouples space and time: namely, the metric $g$ is assumed to take the form
\begin{equation}\label{struct}
g_{\mu\nu} = \left \lbrace{\begin{array}{ll}
1& \textrm{ if } \mu= \nu = 0\\
0 & \textrm{ if } \mu\nu = 0 \textrm{ and } \mu\neq \nu\\
-h_{\mu\nu}(\overrightarrow x) & \textrm{ otherwise.}\end{array}} \right.
\end{equation}
We recall that the Cauchy problem for the Dirac equation in this setting can be written as
\begin{equation}\label{diraceqcurv}
\begin{cases}
i \partial_t u - \mathcal{D} u-m\beta u = 0,\\
u(0,x)=u_0(x),
\end{cases}
\end{equation}
where $\beta$ is a square, complex matrix such that $\beta^2$ is the identity, and $\mathcal{D}$ is the Dirac operator. By construction, the operator $\mathcal{D}$ satisfies the following property:
\begin{equation}\label{square}
\mathcal{D}^2=-\Delta_h+\frac14\mathcal{R}_h,
\end{equation}
where $\Delta_h$  is the Laplace-Beltrami operator for Dirac bispinors, that is, $\lap_h = D^jD_j$ where $D_j$ is the covariant derivative for Dirac bispinors that we properly define later, and $\mathcal R_h$ is the scalar curvature associated to the spatial metric $h$.  

In \cite{cacdes1} the authors  exploited the classical Morawetz multiplier technique in order to obtain local smoothing (or weak dispersive) estimates for the solutions to equation \eqref{diraceqcurv} in the setting of asymptotically flat and (some) warped products manifolds. As it is often the case when dealing with equations on manifolds, it is not possible to rely on the classical Duhamel argument in order to obtain Strichartz estimates for the flow, due to the fact that, even in the asymptotically flat case, the perturbative term can not be regarded as a zero-order perturbation of the flat dynamics. 

In the subsequent paper \cite{cacdescurv1}, the authors  considered  $3$-dimensional spherically symmetric settings, that is manifolds $(\mathcal{M},g)$ defined by $\mathcal{M}=\mathbb{R}_t\times \Sigma$ where now $\Sigma=\mathbb{R}^+_r\times\mathbb{S}^2_{\theta,\phi} $ is equipped with the Riemannian metric
\begin{equation}\label{sphermet}
dr^2+\varphi(r)^2d\omega_{\mathbb{S}^2}^2
\end{equation}
where $d\omega_{\mathbb{S}^2}^2=(d\theta^2+\sin^2\theta d\phi^2)$ is the Euclidean metric on the 2D sphere $\mathbb{S}^2$. Notice that taking $\varphi(r)=r$ reduces $\Sigma$ to the standard 3D Euclidean space, and therefore $\mathcal{M}$ to the standard Minkowski space. Within this setting, in \cite{cacdescurv1} local-in-time, weighted Strichartz estimates for the Dirac dynamics were proved, under some quite general (and natural) assumptions on the function $\varphi$, that will be discussed in forthcoming Subsection \ref{assub}: the main strategy consisted in exploiting the spherical symmetry of the space in order to separate variables and to reduce the problem to a ``sum" of much easier radial equations that could be regarded, after introducing weighted bispinors, as Dirac equations on the flat space perturbed with potentials, for which several results are available. Nevertheless, global-in-time Strichartz estimates turned out to be out of reach, the main problem being the lack of existence of dispersive estimates for the Dirac equation with scaling critical potentials in the Euclidean setting.
\medskip

The purpose of this manuscript is to complement the results of \cite{cacdescurv1} investigating the validity of weighted, global-in-time Strichartz estimates in the more general setting of \emph{warped products} in any space dimension $n\geq3$. We consider manifolds $(\mathcal{M},g)$ defined by $\mathcal{M}=\mathbb{R}_t\times \Sigma$ with $\Sigma=\mathbb{R}_r^+\times\mathbb{K}^{n-1}$ where $\mathbb{K}^{n-1}$ is now a generic $n-1$-dimensional compact and Riemannian spin manifold, and $\Sigma$ is a Riemannian manifold equipped with the Riemannian metric
\begin{equation}\label{warped}
dr^2+\varphi(r)^2d\omega_{\mathbb{K}^{n-1}}^2.
\end{equation}
Here, $\varphi$ is a map from $\R_+$ to itself and $d\omega_{\mathbb{K}^{n-1}}^2$ is the Riemannian metric on $\mathbb{K}^{n-1}$. Of course, this case includes the spherically symmetric one when choosing $\mathbb{K}^{n-1}=\mathbb{S}^{n-1}$, and thus this paper can be regarded in fact as an extension of \cite{cacdescurv1}. On the other hand, as we will see, the assumptions on the admissible functions $\varphi$ will be much stronger: this is due to the fact that, as mentioned, we cannot directly rely on the theory of the flat Dirac equation with potentials, but we need to square the equation at the radial level, in order to reduce to a system of Klein-Gordon equations and then, via Kato smoothing arguments, rely on the existing theory for this dynamics. Let us give some more details on the strategy. Recall that Dirac bispinors in dimension $n+1$ are maps from $\mathcal M$ to $\C^M$ with $M$ an integer bigger than $2^{\lfloor \frac{n+1}{2}\rfloor}$ (in Section 2 we will review the construction of the Dirac operator on curved spaces). Due to \eqref{square}, it is often useful to exploit the identity
\begin{equation}\label{squartrick}
(i \partial_t u - \mathcal{D} u-m\beta u)(i \partial_t u + \mathcal{D} u+m\beta u)=\big(-\partial_t^2   + \lap_h  - \frac14 \mathcal R_h u-m^2\big)\mathbb{I}_M u 
\end{equation}
where $\mathbb{I}_M$ denotes the $M$-dimensional identity matrix, so that if $u$ solves equation \eqref{diraceqcurv} then $u$ also solves system  
\begin{equation}\label{quaddir}
\begin{cases}
-\partial_t^2 u  + \lap_h u - \frac14 \mathcal{R}_h u -m^2 u = 0.\\
u(0,x)=u_0(x),\\
\partial_t u(0,x)=(\mathcal{D} +m)u_0(x)
\end{cases}
\end{equation}
which shows the close relationship between the Dirac and wave/Klein-Gordon flows. This is sometimes referred to as the ``squaring trick", and turns out to be extremely useful, at least in the flat case, to reduce the study of the algebraically rich dynamics of the Dirac equation to the much easier one of the Klein-Gordon one. Let us stress the fact that in this non-flat setting the operator $\Delta_h$ is the bispinorial Laplacian, and not the scalar one; as a consequence, it is not straightforward to adapt the results known for the wave/Klein-Gordon equation on manifolds to deal with the Dirac one. Nevertheless, by using separation of variables, in some symmetric cases it is possible to bring this strategy at a ``radial" level: we intend to walk this path here. However, this plan is not going to work in the ``general" setting of assumptions {\bf (A1)} (the assumptions taken on the metric in \cite{cacdescurv1}, see \eqref{assh} below), and it will force us to impose stronger ones.

\medskip

Before stating our main results, let us recall some basic (and classical) facts about the decomposition of the Dirac operator. On 3-dimensional spherically symmetric manifolds, i.e. if the metric enjoys the structure \eqref{sphermet}, the Dirac equation can be written in the convenient form
$$
i\partial_t \psi  = H_\varphi \psi
$$
where 
$$
H_\varphi =\begin{pmatrix} m & -i \sigma_3 \Big( \partial_r + \frac{\varphi'}{\varphi} \Big) + \frac1{\varphi}\mathcal{D}_{\mathbb{S}^{2}}\\
i \sigma_3 \Big( \partial_r + \frac{\varphi'}{\varphi} \Big) + \frac1{\varphi}\mathcal{D}_{\mathbb{S}^{2}}& -m \end{pmatrix}.
$$ 
Here $\sigma_3$ is one of the Pauli matrices:
 $$\sigma_3 =\begin{pmatrix} 1 &0\\
0& -1 \end{pmatrix}$$
and $\mathcal{D}_{\mathbb{S}^2}$ is the Dirac operator on the sphere $\mathbb{S}^{2}$ (see \cite{thaller} Section 4.6 and \cite{daude1}). It is well-known that the operator $\mathcal{D}_{\mathbb{S}^{2}}$ can be diagonalized (see \cite{camporesi}) : as a consequence, one has the following natural decomposition
\begin{equation}\label{spherdec}
L^2(\mathbb{R}^3)^4\cong\bigoplus_{\mu,j_\mu} L^2((0,+\infty),\varphi^2(r)dr)\otimes\mathcal{H}_{\mu,j_\mu}
\end{equation}
where the indexes are $\mu\in\Z^*=\Z\backslash\{0\}$, $j_\mu\in\{-|\mu|+1,-|\mu|,\dots,|\mu|\}$, and the two-dimensional Hilbert spaces $\mathcal{H}_{\mu,j_\mu}$ are generated by two orthogonal functions $\{\Phi^+_{\mu,j_\mu},\Phi^-_{\mu,j_\mu}\}$ that essentially are normalized spherical harmonics.
The action of $H_\varphi$ on the spaces $H^1(\varphi(r)^2dr) \otimes \text{Vect} (\Phi^+_{\mu,j_\mu},\Phi^-_{\mu,j_\mu})$ is given by
\begin{equation}\label{raddir2}
h_{\mu} = \begin{pmatrix} m & -\Big(\partial_r + \frac{\varphi'}{\varphi} \Big) +\frac{\mu}{\varphi} \\ \Big(\partial_r + \frac{\varphi'}{\varphi} \Big) +\frac{\mu}{\varphi} &  -m \end{pmatrix}
\end{equation}
where the $\mu\in\Z^*$ are the eigenvalues of the angular operator $\mathcal{D}_{\mathbb{S}^2}$ (notice that we are using a slightly different but equivalent decomposition with respect to \cite{thaller} and \cite{cacdescurv1}, that allows a much easier generalization).  More in general, this decomposition holds in the setting of warped product metrics \eqref{warped} in dimension $n\geq3$. Indeed, there exists a decomposition of $L^2(\K^{n-1})$, 

\[
L^2(\K^{n-1}) = \bigoplus_{\mu, j_\mu} \mathcal H_{\mu,j_\mu}
\]
where $\mu$ is taken over the spectrum of $\mathcal D_{\mathbb{K}^{n-1}}$ (which is purely discrete) and where $j_\mu \in [1,r_\mu] \cap \N$ where $r_\mu$ is the multiplicity of $\mu$. On $\mathcal H_{\mu,j_\mu}$, the action of $\mathcal D_\Sigma$ can be represented by $h_\mu$. Subsection \ref{subsec:sepvar} will be devoted to present an overview of the topic.

\subsection{Main results}

We are now ready to state the main results. For a definition of functional spaces, we refer to Subsection \ref{subsec:notations}.

\begin{definition}\label{def:admP} We say that the triple $(p,q,m)$ is admissible, either if $m=0$
\[
\frac2{p} + \frac{n-1}{q} = \frac{n-1}2, \quad  p\geq 2, \quad q\in [2,\infty) , 
\]
or if $m\neq 0$
\[
\frac2{p} + \frac{n}{q} = \frac{n}2, \quad p,q\geq2.
\]
\end{definition}

\begin{remark} Note that the results are also true when $m\neq 0$ in the first case. \end{remark}

The first result we prove is a global-in-time Strichartz estimate for the Dirac flow restricted to eigenspaces of the operator $\mathcal{D}_{\mathbb{K}^{n-1}}$.

\begin{theorem}\label{teo1}  Let $(\mathcal{M},g)$ be a Lorentzian manifold of dimension $n+1\geq 4$ defined by $\mathcal{M}=\mathbb{R}_t\times \Sigma$, with $(\Sigma,h)$ a warped product, that is a Riemannian manifold in the form $\Sigma=\mathbb{R}_r^+\times\mathbb{K}^{n-1}$   where $\mathbb{K}^{n-1}$ is an $(n-1)$-dimensional compact spin manifold, and $\Sigma$ is equipped with the Riemannian metric
\begin{equation}\label{metric}
dr^2+\varphi(r)^2d\omega_{\mathbb{K}^{n-1}}^2
\end{equation}
with $\varphi:\mathbb{R}^+\rightarrow \mathbb{R}^+$ and $d\phi^2$ the Riemannian metric on $\mathbb{K}^{n-1}$. We assume that $\varphi$ is $\mathcal C^2$, that $\varphi(0) = 0$, and $\varphi'(0) = 1$, we also assume that $\frac{\varphi'}{\varphi}$ is bounded outside a neighborhood of $0$.\vspace{0.1cm}

Let $\mu$ be in the spectrum of $\mathcal{D}_{\mathbb{K}^{n-1}}$ and assume that the operator $\mathcal{D}_{\K^{n-1}}$ has no eigenvalue $\mu$ with $|\mu|<\frac12$ (see   \cite[Theorem 3.2]{chou}). 
 Let $V_{\mu} = \frac{\mu(\mu+ \varphi')}{\varphi^2}$ and 
\begin{equation}\label{crucialass}
\delta_\varphi(\mu) = \min(1,\inf (4r^2{V}_{\mu} + 1),\inf (-4r^2 {V}_{\mu}- 4r^3{V}_{\mu}'+1)).
\end{equation}
Assume that the function $\varphi$ in the metric \eqref{metric} satisfies
\begin{equation}\label{crucialcond}
\displaystyle\lim_{r\rightarrow+\infty} V_{\mu}=0,\quad\delta_\varphi(-\mu),\delta_\varphi(\mu) > 0,\quad 4r^2{V}_{\mu} \in L^\infty.
\end{equation}

 Then, for any admissible triple $(p,q,m)$ in Definition \ref{def:admP}, there exists a constant $C$ depending only on $m,p,q, \varphi$ (but not on $\mu$) such that for all $v_0 \in H^{1/2}_{\varphi}$,
\begin{equation}\label{strichartz1}
\begin{split}
\big\| \Big( \frac{\varphi(r)}{r}\Big)^{\frac{(n-1)}2\left(1-\frac2q\right)} &e^{-ith_{\mu}}v_0 \big\|_{L^p(\R,W^{1/q-1/p,q}_\varphi )} \\
&\leq C |\mu|^{5/p+\varepsilon}(\delta_\varphi(\mu)^{1/p+\varepsilon}+\delta_\varphi(-\mu)^{1/p+\varepsilon}) \|v_0\|_{H^{1/2}_\varphi},
\end{split}
\end{equation} 
with $\varepsilon>0$ if $m=0$ in $3d$, otherwise $\varepsilon=0$, and $W^{1/q-1/p,q}_\varphi $ and $H^{1/2}_\varphi$ are Sobolev spaces on the manifold $\Sigma$ for radial functions defined in Subsection \ref{subsec:notations}.

\end{theorem}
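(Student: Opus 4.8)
The plan is to carry out the ``squaring trick'' at the level of the radial operator $h_\mu$, thereby reducing \eqref{strichartz1} to Strichartz estimates for two scalar Klein--Gordon equations with scaling-critical potentials on the half-line, and then to establish these by a Morawetz/Kato-smoothing argument in which the constants are tracked explicitly in $|\mu|$ and $\delta_\varphi(\pm\mu)$.

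\emph{Step 1 (reduction and conjugation).} By the separation of variables recalled in Subsection~\ref{subsec:sepvar} it suffices to estimate $e^{-ith_\mu}v_0$ with $h_\mu$ acting on $L^2\big((0,\infty),\varphi^{n-1}\,dr\big)$. Conjugating by the unitary $v\mapsto\varphi^{(n-1)/2}v$ from this weighted space onto $L^2(dr)$ removes the first-order term $\tfrac{n-1}{2}\tfrac{\varphi'}{\varphi}$ and turns $h_\mu$ into an operator whose square is the \emph{diagonal} operator $\mathrm{diag}(P_+,P_-)$, with $P_\pm:=-\partial_r^2+V_{\pm\mu}+m^2$ ($V_{-\mu}$ being $V_\mu$ with $\mu$ replaced by $-\mu$); since $\partial_t^2 e^{-ith_\mu}v_0=-h_\mu^2\, e^{-ith_\mu}v_0$, each component of the conjugated flow $\varphi^{(n-1)/2}e^{-ith_\mu}v_0$ solves a Klein--Gordon equation for $P_+$ or $P_-$ with initial data built from $v_0$ and $h_\mu v_0$. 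The weight $(\varphi/r)^{\frac{n-1}{2}(1-2/q)}$ in \eqref{strichartz1} is exactly the factor converting the manifold norm $W^{1/q-1/p,q}_\varphi$ into the flat weighted norm $\big\|r^{-(n-1)(1/2-1/q)}\,\cdot\,\big\|_{L^q(dr)}$ in which the one-dimensional theory is naturally phrased, and the $H^{1/2}_\varphi$ regularity on the right is dictated by the half-derivative recovered from $P_\pm^{-1/2}$ applied to the velocity datum $h_\mu v_0$, which is first order in $\partial_r$.

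\emph{Step 2 (smoothing for $P_\pm$).} From $\varphi(r)=r+o(r)$ one gets $V_{\pm\mu}(r)=\frac{\mu(\mu\pm1)}{r^2}+o(r^{-2})$ near $r=0$, so $P_\pm$ is a perturbation of a Bessel operator; the hypothesis that $\mathcal D_{\mathbb K^{n-1}}$ has no eigenvalue with $|\mu|<1/2$ keeps the inverse-square coefficient above the Hardy threshold (so that $P_\pm$ is self-adjoint with a good functional calculus), and $4r^2V_{\pm\mu}+1\geq\delta_\varphi(\pm\mu)>0$ is its quantitative form. The core estimate is a \emph{global-in-time} Kato-smoothing bound of the type $\big\|\langle r\rangle^{-1/2-\varepsilon}|\partial_r|^{1/2}e^{it\sqrt{P_\pm}}f\big\|_{L^2_{t,r}}\lesssim\|f\|_{L^2}$, together with its $TT^*$ and inhomogeneous versions, proved by the Morawetz multiplier method with a multiplier $a(r)$, $a'\geq0$, whose positive commutator produces precisely the quantities $4r^2V_{\pm\mu}+1$ and $-4r^2V_{\pm\mu}-4r^3V_{\pm\mu}'+1$: positivity of $\delta_\varphi(\pm\mu)$ makes the bulk term coercive, while $\lim_{r\to\infty}V_{\pm\mu}=0$ and $4r^2V_{\pm\mu}\in L^\infty$ make the estimate uniform in time and control the boundary and remainder terms. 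Bookkeeping the constants through this step — the potential has size $\sim\mu^2$, and the smoothing norm will enter squared in Step~3 — produces the explicit dependence $|\mu|^{5/p+\varepsilon}\big(\delta_\varphi(\mu)^{1/p+\varepsilon}+\delta_\varphi(-\mu)^{1/p+\varepsilon}\big)$.

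\emph{Step 3 (Strichartz and undoing the reductions).} Taking as reference flow the exact inverse-square Klein--Gordon operator $R_\pm:=-\partial_r^2+\frac{\mu(\mu\pm1)}{r^2}+m^2$, whose Strichartz estimates in the weighted $L^q$ spaces above are classical (with at most the $\varepsilon$-loss of the statement when $m=0$ and $n=3$) and which also satisfies the smoothing estimate of Step~2, one treats $P_\pm=R_\pm+\big(V_{\pm\mu}-\frac{\mu(\mu\pm1)}{r^2}\big)$ perturbatively: a Duhamel argument bounds the Duhamel term in the Strichartz norm by a product of two smoothing norms — one for $R_\pm$, one for $P_\pm$ — which is finite because the perturbation $V_{\pm\mu}-\frac{\mu(\mu\pm1)}{r^2}$ decays at infinity and is dominated at the origin by the Bessel part. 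This gives Strichartz for $e^{it\sqrt{P_\pm}}$ and for $P_\pm^{-1/2}\sin(t\sqrt{P_\pm})$; undoing the conjugation (which reinstates the $\varphi^{(n-1)/2}$ weight, hence the $(\varphi/r)$ factor in \eqref{strichartz1}) and recombining the two components yields the claim. The main obstacle is Step~2: proving the smoothing estimate \emph{globally} in time, with constants only polynomially large in $|\mu|$ and degrading no worse than a fixed power of $\delta_\varphi(\pm\mu)$. This is exactly where the full strength of \eqref{crucialcond} is used — that $V_{\pm\mu}$ is simultaneously decaying and ``repulsive'' in the virial sense, uniformly in $r$ — and where the choice and normalization of the Morawetz multiplier is delicate.
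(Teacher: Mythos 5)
Your overall architecture is the paper's: separation of variables, a conjugation by a power of $r/\varphi$ turning $h_\mu$ into a flat radial Dirac operator with the critical potential $\mu/\varphi$, squaring to a pair of Klein--Gordon equations whose potentials are $V_{\mp\mu}$, a Kato-smoothing/Morawetz estimate whose coercivity constants are exactly the quantities entering $\delta_\varphi(\pm\mu)$, a Rodnianski--Schlag perturbative argument converting smoothing into Strichartz, and finally undoing the conjugation, which is what produces the $(\varphi/r)^{\frac{n-1}{2}(1-\frac2q)}$ weight (Proposition \ref{prop:continuityandconstants} and Lemma \ref{lemsigma}). The one genuine divergence is the reference flow in your Step 3: the paper conjugates only by $\sigma_n=(r/\varphi)^{(n-1)/2}$, stays in radial $L^2(\R^n)$, and runs Duhamel against the \emph{free} wave/Klein--Gordon flow, treating the entire potential $c_\pm$ --- inverse-square part included --- as a perturbation absorbed through $|x|^{-1}$-weighted smoothing on both sides; you conjugate all the way to $L^2(dr)$ and perturb off the exact Bessel operator $R_\pm=-\partial_r^2+\frac{\mu(\mu\pm1)}{r^2}+m^2$. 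Your variant is workable, and has the advantage that the perturbation is milder at the origin, but it imports as a black box the Strichartz and dual-smoothing theory for the inverse-square Klein--Gordon equation with constants tracked in $\mu$, a heavier external input than the free-flow estimates the paper uses.

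Two points need tightening. First, the smoothing estimate you state, with weight $\langle r\rangle^{-1/2-\varepsilon}$ and gain $|\partial_r|^{1/2}$, is not what your own multiplier computation delivers and is not strong enough to close the Duhamel step near $r=0$: the perturbation $V_{\pm\mu}-\frac{\mu(\mu\pm1)}{r^2}$ is only $O(r^{-1})$ at the origin, so $\langle r\rangle^{1+2\varepsilon}\big(V_{\pm\mu}-\frac{\mu(\mu\pm1)}{r^2}\big)$ is unbounded there. You need the homogeneous-weight form $\|r^{-1}e^{it\sqrt{P_\pm}}f\|_{L^2_{t,r}}\lesssim \delta_\varphi(\pm\mu)^{-1/2}\|P_\pm^{1/4}f\|_{L^2}$ --- this is precisely what the positivity of $4r^2V_\mu+1$ and $-4r^2V_\mu-4r^3V_\mu'+1$ yields, cf.\ Propositions \ref{prop:Hcsupersmooth} and \ref{prop:localsmoothing} --- so that the perturbation is controlled by an $\|r^2(\cdot)\|_{L^\infty}$ bound, which is exactly the hypothesis $4r^2V_\mu\in L^\infty$. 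Second, the exponent $5/p$ does not come out of the perturbative step alone, which gives $|\mu|^{5/2}$ times the inverse square roots of $\delta_\varphi(\pm\mu)$ for \emph{every} admissible pair; it is obtained by interpolating that estimate against the trivial $L^\infty_t H^{1/2}$ bound, whose constant is $\mu$-independent, and it is this interpolation that forces the $\varepsilon$-loss when $m=0$ and $n=3$, the endpoint pair then failing to be admissible.
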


\begin{remark}
The condition on $\varepsilon$ in estimate \eqref{strichartz1} is connected to the admissibility of the endpoint triple $(p,q,m)$, as we will briefly discuss in the proof of Corollary \ref{strichartzcor}.
\end{remark}

\begin{remark}
Note that assuming that the compact manifold $\K^{n-1}$ satisfies that the (discrete) spectrum of the Dirac operator on the compact manifold $\K^{n-1}$ is included in $(-\infty,- \frac12]\cup [\frac12, \infty)$,  we get that the Dirac operator on $\Sigma$ where $\varphi=r$, is self-adjoint (see \cite{chou} and the references therein). The operator $h_\mu$ being isomorphic to an $L^\infty$ perturbation of 
\[
\tilde h_\mu = \begin{pmatrix} m & -\Big( \partial_r + \frac1{r}\Big) + \frac\mu{r} \\
\Big( \partial_r + \frac1{r}\Big) + \frac\mu{r} & -m \end{pmatrix},
\]
we get that $h_\mu$ is selfadjoint. 

This will be further commented upon in Remark \ref{rem:selfadjointness}.

\end{remark}

\begin{remark}
 When $\K^{n-1}$ is the $n-1$-dimensional sphere, then the manifold $\Sigma$ is smooth and in fact geometrically complete, which ensures the self-adjointness of the Dirac operator. Also, in this case, by relying on the endpoint Strichartz estimate proved in \cite{mach}, it is possible (and immediate) to recover the endpoint as well, namely the estimate
\begin{equation}\label{strichartznew3}
\left\|u\left(\frac{\varphi(r)}r\right)^{\frac{(n-1)}2}\right\|_{L^2_t(I, L^\infty(\R_+,L^p(\mathbb S^{n-1})))}\leq  \sqrt p |\mu|^{5/2+\varepsilon}(\delta_\varphi(\mu)^{1/2+\varepsilon}+\delta_\varphi(-\mu)^{1/2+\varepsilon}) \|u_0\|_{H^1(\Sigma)}.
\end{equation}
We omit the details.
\end{remark}

\begin{remark}
The dependence on the angular parameter $\mu$  in our Strichartz estimates (that can be ultimately intended as a loss of angular derivatives, and is most likely not sharp), is due to the method of our proof: the action of the ``radial Dirac operator" \eqref{raddir2} depends on the ``angular" eigenvalue $\mu$, and 
as a consequence the Strichartz estimates for the flow $e^{it h_\mu}$ will necessarily depend on $\mu$. The additional $\varepsilon$-loss in the massless case is due to the lack of the endpoint Strichartz estimates in this case, as indeed these estimates will be obtained by interpolation. We refer to \cite{cacdescurv1}, Section 5 for all the details.
\end{remark}

\begin{remark}
The hypothesis $\displaystyle\lim_{r\rightarrow+\infty} V_{\mu}=0$ could be removed by considering a new potential $W_{\mu}=V_{\mu}-v_{\mu}^\infty$ where $v_{\mu}^\infty=\displaystyle\lim_{r\rightarrow +\infty} V_{\mu}$, but as a matter of fact this will be implied by our forthcoming assumptions {\bf (A2)}, and therefore we do not strive to optimize on this condition, as done in \cite{danzha}.
\end{remark}

\begin{remark}
With slight additional care, the result above could be generalized in order to include spaces with conical singularities; the study of the Dirac operator in this context, mostly from the spectral point of view, has been developed in details in  \cite{chou}. The analyisis of dispersive flows on conical spaces (and on spaces with conical singularities) has seen increasing interest in recent years; we don't intend to provide a precise picture of the literature here. 
We mention that the present work has in fact originally motivated the paper \cite{kg}, in which we have analyzed the dispersive dyamics of the Klein-Gordon equation on spaces with conical singularities.  Anyway, we need to stress once more the fact that it is not possible to directly adapt those results to the context of the Dirac flow, as the Laplacian operators are in fact of a different nature (spinorial vs scalar).  
\end{remark}

The fact that the constant on the right hand side of estimate \eqref{strichartz1} is a function of $\mu$ suggests that it might be possible to prove Strichartz estimates with loss of angular derivatives: this kind of estimates are quite classical in the context of dispersive PDEs, and the local-in-time case (in dimension $3$) has been already discussed in the predecessor of this paper, that is \cite{cacdescurv1}. For the next Theorem we shall indeed restrict to the case $\mathbb{K}^{n-1}=\mathbb{S}^{n-1}$, in order to be able to resort to the well-established Littlewood-Paley theory on the sphere. It is in fact possible to ``sum" the Strichartz estimates \eqref{strichartz1} in order to obtain Strichartz estimates for general initial data by requiring additional regularity in the angular variable (we postpone to subsection \ref{subsec:notations} the precise definitions of the spaces $H^{a,b}(\Sigma)$). 

The result is the following:

\begin{theorem}\label{teo2} 
Let $(\mathcal{M},g)$ be defined by $\mathcal{M}=\mathbb{R}_t\times \Sigma$, with $(\Sigma,h)$ a spherically symmetric manifold of dimension $n\geq 3$ with metric given by \eqref{warped} and let $\varphi$ satisfy assumptions \textbf{(A2)} below.

 Let $p,q\in [2,\infty]$ and $a,b \geq 0$ such that $(p,q,m)$ is admissible. Assume either when $m=0$, $n=3$, that $\frac5{pb} + \frac{1}{2a} < 1$ or when $m\neq 0$ or $n>3$, that $\frac5{pb} + \frac{1}{2a} \leq1$. Then the solutions $u$ to \eqref{diraceqcurv} with initial data $ u_0\in H^{a,b}(\Sigma)$ satisfy the estimates
\begin{equation}\label{strichartz2}
\left\|\left(\frac{\varphi(r)}r\right)^{\frac{(n-1)}2\left(1-\frac2q\right)}u \right\|_{L^p_t(\R,W^{1/q-1/p,q}(\Sigma))}\leq C\| u_0\|_{H^{a,b}(\Sigma)}.
\end{equation} 
\end{theorem}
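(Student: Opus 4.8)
The plan is to obtain \eqref{strichartz2} from the single‑eigenspace estimate \eqref{strichartz1} of Theorem~\ref{teo1} by splitting the initial datum into angular‑frequency blocks and summing; the restriction to $\mathbb{K}^{n-1}=\mathbb{S}^{n-1}$ enters only because we shall need the Littlewood--Paley theory attached to $-\Delta_{\mathbb{S}^{n-1}}$ (equivalently to $\mathcal D_{\mathbb{S}^{n-1}}$). First I would record, from the separation of variables of Subsection~\ref{subsec:sepvar}, that the flow of \eqref{diraceqcurv} is block‑diagonal: it preserves each $L^2((0,\infty),\varphi^{n-1}\,dr)\otimes\mathcal H_{\mu,j_\mu}$ and acts there as $e^{-ith_\mu}$ (well defined by the self‑adjointness of $h_\mu$ discussed above), so that, writing $u_0=\sum_{\mu,j_\mu}v_0^{\mu,j_\mu}\otimes\Phi_{\mu,j_\mu}$ for the associated orthogonal decomposition, $u=\sum_{\mu,j_\mu}\bigl(e^{-ith_\mu}v_0^{\mu,j_\mu}\bigr)\otimes\Phi_{\mu,j_\mu}$. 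I would then check that assumptions \textbf{(A2)} imply, uniformly in $\mu$ in the spectrum of $\mathcal D_{\mathbb{S}^{n-1}}$, the hypotheses \eqref{crucialcond} required by Theorem~\ref{teo1}, and that $\delta_\varphi(\pm\mu)\le1$; hence the $\mu$-dependent constant in \eqref{strichartz1} is bounded by $\lesssim\langle\mu\rangle^{5/p+\varepsilon}$, and on each eigenmode $\bigl\|(\varphi/r)^{\frac{n-1}2(1-2/q)}e^{-ith_\mu}v_0^{\mu,j_\mu}\bigr\|_{L^p_tW^{1/q-1/p,q}_\varphi}\lesssim\langle\mu\rangle^{5/p+\varepsilon}\|v_0^{\mu,j_\mu}\|_{H^{1/2}_\varphi}$.

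Next I would prove a \emph{single‑block} estimate. Fix a smooth dyadic decomposition $1=\sum_N\chi_N(\langle\mathcal D_{\mathbb{S}^{n-1}}\rangle)$ and set $P_N=\chi_N(\langle\mathcal D_{\mathbb{S}^{n-1}}\rangle)$; being a function of $\mathcal D_{\mathbb{S}^{n-1}}$, each $P_N$ acts diagonally in the above decomposition, hence commutes with the flow, and $P_Nu$ is carried by modes with $|\mu|\sim N$, on which the loss $\langle\mu\rangle^{5/p+\varepsilon}$ is uniformly $\sim N^{5/p+\varepsilon}$. The mode‑by‑mode bounds of the previous step can then be summed over $(\mu,j_\mu)$ in the block: using the square‑function characterisation of $L^q(\mathbb{S}^{n-1})$ (valid since $q\ge2$) together with Minkowski's inequality (using $p,q\ge2$) to move the $\ell^2$-sum past the $L^p_tW^{s,q}$‑norm, and absorbing the elementary comparison, on a fixed eigenspace, between the $\Sigma$‑Sobolev norm of \eqref{strichartz2} and the radial norm of \eqref{strichartz1} (which costs only harmless powers of $\langle\mu\rangle$, exactly as in \cite[Section~5]{cacdescurv1}), I expect to reach, for each $N$,
\[
\bigl\|(\varphi/r)^{\frac{n-1}2(1-2/q)}P_Nu\bigr\|_{L^p_tW^{1/q-1/p,q}(\Sigma)}\ \lesssim\ N^{5/p+\varepsilon}\,\bigl\|P_Nu_0\bigr\|_{H^{1/2}(\Sigma)},
\]
where the right‑hand norm may be read as $1/2$ radial derivatives with the angular variable untouched (the ambient $H^{1/2}(\Sigma)$‑norm being larger).

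The last step is the reassembly. Summing over dyadic blocks via the reverse square‑function inequality on $\mathbb{S}^{n-1}$ (again $q\ge2$) and Minkowski gives
\[
\bigl\|(\varphi/r)^{\frac{n-1}2(1-2/q)}u\bigr\|_{L^p_tW^{1/q-1/p,q}(\Sigma)}^{2}\ \lesssim\ \sum_N N^{10/p+2\varepsilon}\,\bigl\|P_Nu_0\bigr\|_{H^{1/2}(\Sigma)}^{2},
\]
and it remains to dominate the right‑hand side by $\|u_0\|_{H^{a,b}(\Sigma)}^{2}$. For this I would apply, block by block, the radial interpolation inequality $\|v_0\|_{H^{1/2}_\varphi}\lesssim\|v_0\|_{H^a_\varphi}^{1/(2a)}\|v_0\|_{L^2_\varphi}^{1-1/(2a)}$ (which forces $a\ge\tfrac12$), then H\"older in $N$ with conjugate exponents $(2a,\tfrac{2a}{2a-1})$; distributing the powers of $N$ optimally between the factor measured by the angular regularity $b$ and the factor measured by the radial regularity $a$ produces exactly the balance $\tfrac5{pb}+\tfrac1{2a}\le1$ (consistent, in the limit $b\to\infty$, with the requirement $a\ge\tfrac12$ inherited from Theorem~\ref{teo1}). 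In the massless three‑dimensional case the $\varepsilon>0$ forced in \eqref{strichartz1} by the missing endpoint turns this into the strict inequality $\tfrac5{pb}+\tfrac1{2a}<1$ of the statement; since all intermediate constants are independent of $\mu$ and $N$, this yields \eqref{strichartz2}.

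The hard part is the second step: one needs Littlewood--Paley / square‑function estimates on $\mathbb{S}^{n-1}$ compatible with the \emph{weighted} mixed norm $L^p_tW^{s,q}(\Sigma)$ — that is, the exchange of the $\ell^2$ angular sum with $\|\cdot\|_{L^p_tW^{s,q}}$, where $p,q\ge2$ enters essentially — together with the (trivial but crucial) fact that the weight $(\varphi(r)/r)^{\frac{n-1}2(1-2/q)}$ depends on $r$ alone and hence commutes with every $P_N$. A subsidiary point, also required uniformly in $\mu$, is the identification of the Sobolev spaces $W^{s,q}(\Sigma)$ and $H^{a,b}(\Sigma)$ with their radial analogues on a single eigenspace, for which one uses the description of $h_\mu$ as an $L^\infty$‑perturbation of $\tilde h_\mu$ under \textbf{(A2)}. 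The remaining manipulations — summing the series in $N$ and reducing to estimates at the level of $v_0^{\mu,j_\mu}$ — are routine and parallel \cite[Section~5]{cacdescurv1}, the only new features being the general dimension $n\ge3$ and the warped‑product metric.
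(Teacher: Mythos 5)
Your overall architecture (dyadic angular localisation, a block estimate with loss $N^{5/p+\varepsilon}$, then trading powers of $N$ for angular regularity via a Young/H\"older-type inequality to reach $\tfrac{5}{pb}+\tfrac1{2a}\le 1$) matches the paper's, and your first and third steps are essentially Proposition \ref{prop:LPsetup} and Proposition \ref{prop:LPfirststep}. But your second step — the single-block estimate — has a genuine gap. You propose to obtain $\|(\varphi/r)^{\cdots}P_N u\|_{L^p_tW^{s,q}}\lesssim N^{5/p+\varepsilon}\|P_Nu_0\|_{H^{1/2}}$ by summing the mode-by-mode estimates of Theorem \ref{teo1} over the individual eigenmodes $(\mu,j_\mu)$ with $|\mu|\sim N$, invoking "the square-function characterisation of $L^q(\mathbb S^{n-1})$ together with Minkowski". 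The square-function theorem controls $\|\sum_j \pi_j f\|_{L^q}$ by the $\ell^2$-sum over \emph{dyadic} blocks; it says nothing about decomposing a single block into its individual eigenmodes. For $q>2$ there is no inequality of the form $\|\sum_{|\mu|\sim N}f_\mu\|_{L^q}\lesssim(\sum_{|\mu|\sim N}\|f_\mu\|_{L^q}^2)^{1/2}$ for orthogonal spherical-harmonic components (already on the circle, a Dirichlet-kernel-type sum violates it), and Minkowski only gives the $\ell^1$-sum, which costs an extra factor of the number of modes in the block, i.e. a power $N^{n-2}$ that destroys the final balance. Orthogonality of the modes helps only in $L^2$-based norms.

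This is exactly the obstruction the paper's proof is designed to avoid. In Proposition \ref{prop:strichartrestricted} the block Strichartz estimate is \emph{not} obtained by summing single-mode Strichartz estimates: one writes $\sigma_n^{-1}\mathcal D_\Sigma\sigma_n=\mathcal D_{\R^n}+\mathcal V$, squares, and runs the Rodnianski--Schlag/Duhamel argument against the \emph{free} flow on $\R^n$ (whose Strichartz estimates hold for arbitrary, not mode-restricted, data); the error term is $\|r\mathcal W p_{a,b}u\|_{L^2(\R^{n+1})}$ with $\mathcal W=\{\mathcal V,\mathcal D_{\R^n}\}+\mathcal V^2$, which is an $L^2$-in-space quantity and therefore \emph{can} be controlled by summing the mode-by-mode local smoothing bounds (Proposition \ref{prop:localsmoothaflat}, giving $b^{1/2}$) after checking $\|p_{a,b}\,r\mathcal W r\,p_{a,b}\|_{L^2\to L^2}\lesssim b^2$. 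So the only place where modes are summed is in an $L^2$ norm, where orthogonality is available. To repair your argument you would need to replace your step 2 by such a perturbative block argument (or supply a genuinely new $\ell^2(L^q)$ summation device for eigenmodes within a dyadic block, which you do not have).
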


\begin{remark}
The analogous of Theorem \ref{teo2} could be proved in the more general case of warped products under assumption \eqref{crucialcond} provided one has a suitable Littlewood-Paley theory on the manifold $\mathbb{K}^{n-1}$. This might be the object of forthcoming works.
\end{remark}

\subsection{Admissible manifolds: discussion}\label{assub}

The assumptions on the function $\varphi$ required in \cite{cacdescurv1} to obtain local-in-time Strichartz estimates were the following:
\medskip

\textbf{Assumptions (A1).} Let $\varphi\in C^\infty(\mathbb{R^+})$ be strictly positive on $(0,+\infty)$, such that
\begin{equation}\label{assh}
\varphi(0)=\varphi^{(2n)}(0)=0,\qquad \varphi'(0)=1,\qquad \frac{\varphi'(r)}{\varphi(r)}\in L^\infty , \qquad \inf_{r\geq 1} \varphi(r) > 0.
\end{equation}

As mentioned, we won't be able to obtain global-in-time Strichartz estimates in this context, as {\bf (A1)} is not enough to ensure the necessary condition \eqref{crucialcond}. Instead, the main example we here have in mind is given by the \emph{asymptotically flat manifolds}: we thus set  the following

\medskip
\textbf{Assumptions (A2).} Let $\varphi \in C^\infty(\mathbb{R^+})$ be such that $\varphi(0) = 0$, $\varphi'(0) = 1$, and for all $k\in \N$, $\varphi^{(2k)}(0) = 0$. We assume that there exists $\varphi_1 \in \mathcal C^\infty (\R^+)$ such that
\[
\varphi : r\mapsto r(1 + \varphi_1(r))
\]
with the following assumptions on $\varphi_1$:

 \begin{itemize}
\item $\varphi_1$ is non-negative 
\medskip
\item  $\sup_{r\geq 0}(|\varphi_1(r)|+|r\varphi_1(r)'|+ |r^2\varphi_1''(r)|)\leq C$, where $C\ll 1$.

\end{itemize}

\begin{remark} The map
\[
\varphi_1 = \varepsilon\frac{r^\alpha}{\an{r}^\beta}
\]
with $\beta \geq \alpha> 0$, with $\alpha,\beta \in \N$ satisfies these assumptions.
\end{remark}

In Subsection \ref{admissasflat} we will prove the following 
\begin{proposition}\label{verification} 
Let $(\M,g)$ be defined by $\M=\mathbb{R}_t\times \Sigma$, with $(\Sigma,h)$ a warped product with metric given by \eqref{warped}. Let $\mu_0$ be the infimum of the positive part of the spectrum of the Dirac operator on $\K^{n-1}$, and assume that $\mu_0>1/2$. If $\varphi$ satisfies assumptions \textbf{(A2)} where the required smallness of  $C$ is determined by $\mu_0$, then the assumptions \eqref{crucialcond} are fullfilled.
\end{proposition}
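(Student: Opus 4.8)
The plan is to verify the three requirements in \eqref{crucialcond} directly from the structure $\varphi(r)=r(1+\varphi_1(r))$ afforded by \textbf{(A2)}, viewing $V_\mu$ as a perturbation---small once $C$ is small---of the model potential $\mu(\mu+1)/r^2$ coming from the flat metric $\varphi_1\equiv 0$. Write $W_\mu(r):=r^2V_\mu(r)$; since $\varphi^2=r^2(1+\varphi_1)^2$ and $\varphi'=1+\varphi_1+r\varphi_1'$ one gets the closed formula
\[
W_\mu=\frac{\mu^2+\mu+\mu\varphi_1+\mu r\varphi_1'}{(1+\varphi_1)^2}=\mu(\mu+1)+E_\mu,\qquad E_\mu:=\frac{\mu\varphi_1+\mu r\varphi_1'-(\mu^2+\mu)(2\varphi_1+\varphi_1^2)}{(1+\varphi_1)^2}.
\]
First I would dispose of the two easy conditions. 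As $\varphi_1\ge 0$ we have $\varphi(r)\ge r\to\infty$, while $|\mu(\mu+\varphi')|$ stays bounded (using $|\varphi_1|,|r\varphi_1'|\le C$), so $\lim_{r\to\infty}V_\mu=0$; and since $(1+\varphi_1)^{-2}\le 1$, the same boundedness yields $W_\mu\in L^\infty$, that is $4r^2V_\mu\in L^\infty$.

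The core of the argument is the positivity of $\delta_\varphi(\pm\mu)$. Using $r^3V_\mu'=rW_\mu'-2W_\mu$ one has the identity $-4r^2V_\mu-4r^3V_\mu'+1=4W_\mu-4rW_\mu'+1$, so that, recalling the definition \eqref{crucialass} of $\delta_\varphi$,
\[
\delta_\varphi(\mu)=\min\Big(1,\ \inf_{r>0}\big(4W_\mu+1\big),\ \inf_{r>0}\big(4W_\mu-4rW_\mu'+1\big)\Big),
\]
and in both nontrivial infima the flat part contributes $4\mu(\mu+1)+1=(2\mu+1)^2$ (respectively $(2\mu-1)^2$ when $\mu$ is replaced by $-\mu$). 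This is where $\mu_0>1/2$ enters: every eigenvalue satisfies $|\mu|\ge\mu_0$, hence $(2\mu\pm1)^2\ge(2|\mu|-1)^2\ge(2-\mu_0^{-1})^2\mu^2=:\kappa\mu^2$ with $\kappa>0$ depending only on $\mu_0$. It then remains to prove the perturbative bound
\[
\sup_{r>0}\big(|E_\mu(r)|+|rE_\mu'(r)|\big)\le\Lambda(\mu_0)\,C\,\mu^2,
\]
which I would obtain by differentiating the explicit formula for $E_\mu$ and using that each of $|\varphi_1|$, $|r\varphi_1'|$, $|r^2\varphi_1''|$---and therefore also $|r(r\varphi_1')'|$ and $\big|r\tfrac{d}{dr}(1+\varphi_1)^{-2}\big|$---is $\le 2C$: every resulting term carries at least one such factor, has $\mu$-degree at most two, and its lower-degree pieces are absorbed into $\mu^2$ using $|\mu|\ge\mu_0$. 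Combining the two bounds gives $4W_\mu+1\ge(\kappa-4\Lambda(\mu_0)C)\mu^2$ and $4W_\mu-4rW_\mu'+1\ge(\kappa-8\Lambda(\mu_0)C)\mu^2$; choosing $C<\kappa/(16\Lambda(\mu_0))$---a smallness determined solely by $\mu_0$---both right-hand sides are $\ge\tfrac{\kappa}{2}\mu^2\ge\tfrac{\kappa}{2}\mu_0^2>0$, so $\delta_\varphi(\mu)>0$. Replacing $\mu$ by $-\mu$ changes neither $|\mu|$, nor the flat part, nor the bounds, so $\delta_\varphi(-\mu)>0$ as well, and \eqref{crucialcond} holds.

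The one genuinely delicate point, which I expect to be the main obstacle, is this perturbative estimate: one must track the powers of $\mu$ carefully enough that the bound on $|E_\mu|+|rE_\mu'|$ is $O(C\mu^2)$ with a constant depending only on $\mu_0$ (not on $\mu$), since it is precisely the matching of this $\mu^2$ growth against the $\mu^2$ lower bound for $(2\mu\pm1)^2$ that allows a single smallness condition on $C$ to handle all eigenvalues $\mu$ simultaneously. Care is also needed to verify that $(1+\varphi_1)^{-2}$ and its $r$-derivative are harmless, which again relies on $\varphi_1\ge 0$ together with $|r\varphi_1'|\le C$.
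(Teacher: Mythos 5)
Your proposal is correct and follows essentially the same route as the paper: the paper's proof (Lemma \ref{deltalemma}) likewise writes $\varphi=r(1+\varphi_1)$, isolates the flat contribution $\mu^2\mp\mu$ in the two infima of \eqref{crucialass}, and absorbs the remaining terms using the smallness of $\|\varphi_1+r\varphi_1'\|_\infty$ and of the second-derivative combination. The only organizational difference is that you get uniformity in $\mu$ from the single scaling bound $(2|\mu|-1)^2\geq(2-\mu_0^{-1})^2\mu^2$ matched against an $O(C\mu^2)$ perturbation, whereas the paper splits into the cases $\mu\geq2$, $\mu\in[1,2)$, $\mu\in[\mu_0,1)$ to produce its explicit thresholds on $\max(A_\varphi,B_\varphi)$; both are valid.
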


\begin{remark}
We will provide more precise assumptions on $\varphi_1$ and in particular on the size of the constant $C$, with explicit dependence on the space dimension, at the beginning of Section \ref{sec:strich}.
\end{remark}

\begin{remark}
It is a natural question to ask whether there exist other possible choices of the function $\varphi$ that satisfy condition \eqref{crucialcond}. We will devote the appendix to a small discussion.
\end{remark}

The plan of the paper is the following: In Section \ref{sepvar} we review the separation of variables procedure for the Dirac equation in the warped products setting, and show how to reduce to the Klein-Gordon dynamics. In Section \ref{secstr} we discuss the classical Kato's argument to obtain the Strichartz estimates for the Klein-Gordon dynamics with potentials of critical decay. Finally, in Section \ref{sec:strich} we show that  asymptotically flat manifolds are admissible, and we prove Strichartz estimates for general initial data in the spherically symmetric setting.

\subsection{Notations}\label{subsec:notations}

We will use the standard notation $L^p$, $\dot{H}^s$, $H^s$, $W^{p,q}$ to denote, respectively, the Lebesgue and the homogeneous/non homogeneous Sobolev spaces of functions from $\R^n$ to $\C^M$. We will use the same notation to denote these functional spaces on the (spatial) manifold $(\Sigma,h)$, which is in our structure \eqref{struct}, i.e. with time and space already decoupled, by adding the dependence $L^p(\Sigma)$, $\dot{H}^s(\Sigma)$, $H^s(\Sigma)$, $W^{p,q}(\Sigma)$: e.g., the norm $L^p(\Sigma)$ will be given by
$$
\|f\|_{L^p(\Sigma)}^p:=\int |f(x)|^p \sqrt{\det(h(x))}dx
$$
and so on. 

The space $L^2(\Sigma)$ is thus endowed with the usual Hilbertian structure.

The space $\dot H^1(\Sigma)$,  is induced by the norm
\[
\|f\|_{\dot H^1(\Sigma)}^2 : =  \|\sqrt{h^{ij}\an{D_i f,D_j f}_{\C^M}}\|_{L^2(\Sigma)}
\]
where the $D_j$ are covariant derivatives for Dirac bispinors.

The space $W^{1,p}$, $p\in [1,\infty]$ is induced by
\[
\|f\|_{W^{1,p}(\Sigma)} =  \|\sqrt{h^{ij}\an{D_i f,D_j f}_{\C^M}}\|_{L^p(\Sigma)} + \|f\|_{L^p(\Sigma)}.
\]
The spaces $\dot H^s(\Sigma)$ and $W^{s,p}(\Sigma)$ with $s\in [-1,1]$ are defined by interpolation and duality.

Due to the warped product structure of the metric \eqref{warped}, for a radial function $f_{rad}(|x|)$ we define
$$
\|f_{rad}\|_{L^p_\varphi}^p:=\int_0^{+\infty} |f_{rad}(r)|^p \varphi(r)^{n-1}dr \sim \|f_{rad}\|_{L^p(\Sigma)}^p .
$$
For the Sobolev spaces, we use the compatible notations
\[
\|f_{rad}\|_{\dot H^1_\varphi} := \|\partial_r f_{rad}\|_{L^2(\Sigma)}  
\]
and 
\[
\|f_{rad}\|_{W^{1,p}_\varphi} := \|\partial_r f_{rad}\|_{L^p(\Sigma)} + \|f_{rad}\|_{L^p(\Sigma)}.
\]
We define $\dot H^s_\varphi$ and $W^{s,p}_\varphi$, $s\in (0,1)$, by interpolation, and $\dot H^s_\varphi, W^{s,p}_\varphi$, $s\in [-1,1], p\in (1,\infty)$, by duality. 

It is important to note that because of the behavior of $\varphi$ at $0$, we have 
\[
\|f_{rad}\|_{W^{s,p}_\varphi} = \left\|\Big(1-\frac1{\varphi^{n-1}}\partial_r(\varphi^{n-1} \partial_r) \Big)^{s/2}f\right\|_{L^p(\Sigma)}.
\]
When $\K^{n-1}$ is the sphere, due to the absence of singularity at $0$, this extends to the whole manifold, as in
\[
\|f\|_{W^{s,p}(\Sigma)} = \left\|\Big(1-\frac1{\varphi^{n-1}}\partial_r(\varphi^{n-1} \partial_r) - \frac1{\varphi^2} \Delta_{\mathbb S^2}\Big)^{s/2}f\right\|_{L^p(\Sigma)}
\]
where $\Delta_{\mathbb S^2}$ is the Laplace-Beltrami operator on the sphere.

Note that since we are dealing with vectors in $\C^M$, $|f(x)|$ should be understood as 
$$
|f(x)| = \sqrt{\an{f(x),f(x)}_{\C^M}}.
$$

The norms in time will be denoted by $L^p_t$. The mixed Strichartz spaces will be standardly denoted by $ L^p_tL^q(\Sigma)=L^p(I;L^q(\Sigma,\C^M))$.

We finally introduce the spaces $H^{a,b}$ for $a \in[-1,1], b\in \R$ by defining the norms
$$
\|f\|_{H^{a,b}(\Sigma)} = \Big( \|f\|_{H^a(\Sigma)}^2 + \|(-\lap_{\mathbb S^{n-1}})^{b/2}f\|_{L^2(\Sigma)} \Big)^{1/2}.
$$

\section{The setup: separation of variables \\ and reduction to Klein-Gordon}\label{sepvar}

The construction of the Dirac operator on a 4D manifold is a rather delicate task in general, and requires the introduction of the so called \emph{vierbein} which, essentially, define some proper frames that connect the metric of the manifold $(\mathcal{M},g)$ to the Lorentzian one $\eta$; details can be found in the predecessor of this paper, \cite{cacdescurv1}, and in \cite{parktoms}. In order to properly define those frames, also known as Cartan's formalism, one needs the hypothesis that the manifold admits a spin structure: we will take this as an assumption. The fact that admitting a spin structure is a homological property has been proved and commented upon in \cite{spinstructure}. In fact, $\Sigma$ (and $\mathcal{M}$) inherit a spin structure from the spin structure of $\K^{n-1}$; we will explain this in the next subsection.

In this section we show how to use separation of variables and the classical spectral theory for the Dirac equation on compact manifolds to reduce the study of the dynamics of Dirac equation on warped products to the one of a system of radial Klein-Gordon equations. We refer the reader to \cite{chou,bar,thaller} for further details on various aspects we will discuss.

\subsection{The separation of variables}\label{subsec:sepvar}

We start by recalling that the Dirac operator in a Lorentzian manifold $(\mathcal{M},g)$ of dimension $n+1$ admitting a spin structure and with decoupled space and time writes 
$$
\mathcal{D}  = m\gamma^0  - i\gamma^0\underline \gamma^j D_j  .
$$
where the implicit summation on $j$ is taken from $1$ to $n$. Above, $m\in \R$ is the mass of the electron, $\gamma^0$ is a self-adjoint matrix of size $M\times M$  with $M=2^{\lfloor (n+1)/2 \rfloor}$ with values in $\C$ whose square is the identity, $\underline \gamma^j$ are anti-hermitian matrix bundles that satisfy
$$
\forall j,k =1,\hdots ,n \quad \{\underline \gamma^j,\underline{\gamma^k}\} = 2 g^{jk},\quad \forall j=1,\hdots, n \quad \{\gamma^0,\underline \gamma^j\} = 0
$$
and $D_j$ are covariant derivatives for spinor bundles.

Writing $\mathcal M, g$ as $\mathcal M = \R_t \times \Sigma$ and
\[
g = \begin{pmatrix}
1 & (0) \\ (0) & -h 
\end{pmatrix}
\]
where $h$ is the (Riemannian)  metric of $\Sigma$, we endow the spinorial Riemannian manifold $\Sigma, h$ with a vierbein $e^j_{\; a}$ (chosen such that for all $j,k$, $e^j_{\; a}\delta^{ab} e^k_{\; b} = h^{jk}$), we fix
$$
\underline \gamma^j = e^j_{\; a} \gamma^a.
$$
The implicit summation for $a$ is taken from $1$ to $n$. The family $(\gamma^a)_{0\leq a\leq n}$ satisfies the anticommutation relations: 
$$
\{\gamma^a,\gamma^b\} = 2\eta^{ab}
$$
where 
$$
\eta = \begin{pmatrix}
1 & &  & \\  & -1 & (0) & \\ & (0) & \ddots & \\  & & & -1
\end{pmatrix}
$$
is the Minkowski metric in $\R^{1+n}$. Writing $\alpha^0 = \gamma^0$ and $\alpha^a = \gamma^0 \gamma^a$, we have that the family $(\alpha^a)_{0\leq a \leq n}$ satisfy the \emph{canonical anticommutation relations} 
$$
\{\alpha^a,\alpha^b\} = 2\delta^{ab}
$$
and are self-adjoint matrices. What is more, the Dirac operator now writes
$$
\mathcal{D} = m\alpha^0 -i e^j_{\; a} \alpha^a D_j.
$$

The minimal dimension for such a family of matrices is $2^{\lfloor(n+1)/2 \rfloor}$. An easy way to see that this dimension is big enough is to consider for $n=2$, the Pauli matrices 
$$
\alpha^0 = \sigma_1 = \begin{pmatrix} 1 &0 \\ 0 & -1\end{pmatrix},\quad \alpha^1 = \sigma_2 = \begin{pmatrix} 0 & i \\-i & 0 \end{pmatrix} , \quad \alpha^2 = \sigma_3 = \begin{pmatrix}
0 & 1 \\ 1 & 0 
\end{pmatrix} 
$$
and for $n=2k+2$ even, given a family  $(\tilde \alpha^a)_{0\leq a \leq 2k}$ of self-adjoint matrices of size $K\times K$ satisfying canonical anticommutation relations, the matrices written by block
\begin{multline}\label{eventoeven}
\alpha^0 = \begin{pmatrix}
\textrm{Id}_K & (0) \\ (0) & -\textrm{Id}_K
\end{pmatrix}, \\
\forall a=0,\hdots ,2k , \alpha^{a+1} = \begin{pmatrix}
(0) & \tilde \alpha^a \\ \tilde \alpha^a & (0)
\end{pmatrix}, \quad
\alpha^n = \begin{pmatrix}
(0) & i \textrm{Id}_K \\ -i \textrm{Id}_K & (0)
\end{pmatrix}.
\end{multline}

Therefore, a natural way to pass from dimension $n = 2k$ even to $n+1 = 2k+1$ odd is to pass from the family of matrices $(\tilde \alpha^a)_{0\leq a \leq n}$ to 
$$
\alpha^0 = \begin{pmatrix}
\textrm{Id}_K & (0) \\ (0) & -\textrm{Id}_K
\end{pmatrix}, \quad
\forall a=0,\hdots ,2k , \alpha^{a+1} = \begin{pmatrix}
(0) & \tilde \alpha^a \\ \tilde \alpha^a & (0)
\end{pmatrix};
$$
and to pass from dimension $n+1 = 2k+1$ odd to dimension $n+2$ even is simply to add the matrix 
$$
\alpha^n = \begin{pmatrix}
(0) & i \textrm{Id}_K \\ -i \textrm{Id}_K & (0)
\end{pmatrix}.
$$
However, it is also natural to pass from an odd to an even dimension in the same way as to pass from an even to an odd. The reason is that, because of the theory of Clifford algebras, the algebra generated by the family $(\alpha^a)_{0\leq a\leq n+2}$ defined as in \eqref{eventoeven} is canonically isomorphic to the one generated by
$$
\begin{pmatrix}
\textrm{Id}_{2K} & (0) \\ (0) & -\textrm{Id}_{2K}
\end{pmatrix}, \quad
\forall a=0,\hdots ,n+1 , \begin{pmatrix}
(0) & \alpha^a \\ \alpha^a & (0)
\end{pmatrix}.
$$
We now consider the following setting: $(\Sigma,\sigma)$ is a warped product, that is a Riemannian manifold in the form $\Sigma=\mathbb{R}^+_r\times\mathbb{K}^{n-1}_\phi$ where $\mathbb{K}^{n-1}$ is a $(n-1)$-dimensional compact spin manifold, and $\Sigma$ is equipped with the Riemannian metric
$$
dh^2=dr^2+\varphi(r)^2d\phi^2
$$
where $\varphi :\mathbb{R}^+\rightarrow \mathbb{R}^+$ and $d\phi^2$ is the Riemannian metric over $\mathbb{K}^{n-1}$. In other words,
\[
h = \begin{pmatrix}
1 & (0) \\ (0) & \varphi^2 \kappa
\end{pmatrix}
\]
where $\kappa$ is the Riemannian metric of $\K^{n-1}$.

In the case that interests us, we assume that a vierbein $\tilde e = (\tilde e_j^{\;a})$ has been set for $\mathbb{K}^{n-1}$, that we assume admits a spin structure. As the equation is covariant, we may choose any convenient vierbein for $\Sigma$: we use as a vierbein for $\Sigma$
$$ e=
\begin{pmatrix}
1 & (0) \\ (0) & \varphi(r)\tilde e
\end{pmatrix}.
$$
We set $(\tilde\alpha^a)_{0\leq a\leq n-1}$ a family of matrices satisfying canonical anticommutation relations and 
$$
\alpha^0 = \begin{pmatrix} \textrm{Id} & (0) \\ (0) & -\textrm{Id}\end{pmatrix},\quad \forall a=0,\hdots ,n-1,\; \alpha^{a+1} = \begin{pmatrix} (0) &  \tilde \alpha^a \\ \tilde \alpha^a & (0) \end{pmatrix}.
$$
We set also 
$$
\gamma^0  = \alpha^0 , \forall a=1,\hdots ,n,\; \gamma^a  = \alpha^0 \alpha^a ,
$$
and finally 
$$
\tilde \gamma^0 = \tilde \alpha^0, \quad \forall a=1,\hdots, n ,\; \tilde \gamma^a = \tilde \alpha^0 \tilde \alpha^a.
$$
We recall that the covariant derivatives for Dirac spinors are given by
$$
D_\mu = \partial_\mu +i \omega_\mu^{\; ab} \Sigma_{a,b}.
$$
where $\omega$ is the spin connection and 
$$
\Sigma_{a,b} = -\frac{i}{8} [\gamma_a,\gamma_b].
$$

We have for all $a,b =1,\hdots, n$
\begin{multline*}
[\gamma^a,\gamma^b] = [\alpha^0 \alpha^a, \alpha^0\alpha^b] = [\alpha^b,\alpha^a] = \begin{pmatrix}
[\tilde \alpha^{b-1},\tilde \alpha^{a-1}] & (0) \\ (0) &  [\tilde \alpha^{b-1},\tilde \alpha^{a-1}]
\end{pmatrix} = \\
\begin{pmatrix}
[\tilde \gamma^{a-1}, \tilde \gamma^{b-1}] & (0) \\ (0) & [\tilde \gamma^{a-1}, \tilde \gamma^{b-1}]
\end{pmatrix}.
\end{multline*}
Therefore, we have 
$$
\Sigma_{a,b}  = \begin{pmatrix}
\tilde \Sigma_{a-1,b-1} & (0) \\ (0) & \tilde \Sigma_{a-1,b-1}
\end{pmatrix}
$$
where $\tilde \Sigma_{a,b} = -\frac{i}{8} [\tilde \gamma^a, \tilde \gamma^b ]$.

We also have 
$$
de^a + \omega^a_{\; b} \wedge e^b = 0.
$$
Since $e^1  = dr$, we have $de^1 = 0$ and thus 
\[
\omega^1_{\; b} \wedge e^b = 0.
\]
Therefore, we get $\omega^1_{\; b} \sim e_{\; b}$  for all $b$ and then $\omega_1^{\; 1b} = 0$ for all $b\geq 1$.

Since for all $a>1$, we have $e^a = \varphi(r)\tilde e^{a-1}$, we get
$$
de^a = \varphi' e^1 \wedge \tilde e^{a-1} + \varphi d\tilde e^{a-1} = \varphi' e^1 \wedge \tilde e^{a-1} - \varphi \tilde \omega^{a-1}_{\; b} \wedge \tilde e^b
$$
and thus
$$
\omega^a_{\; b} \wedge e^{b} = \varphi' \tilde e^{a-1} \wedge e^1  + \varphi \tilde\omega^{a-1}_b\wedge \tilde e^{b} =\varphi' \tilde e^{a-1} \wedge e^1  +  \tilde\omega^{a-1}_b\wedge  e^{b+1} .
$$
Therefore,
$$
\omega^a_{\; 1} = \varphi' \tilde e^{a-1}\sim e^a \Rightarrow \omega_1^{\; a1} = 0 \textrm{ and for all }j>1, \; \omega_j^{\; a1} =\varphi' \tilde e_{j-1}^{a-1} 
$$
and for all $b>1$,
$$
\omega^a_{\; b} = \tilde \omega^{a-1}_{\; (b-1)} \Rightarrow \omega_1^{\; ab} = 0 \textrm{ and for all }j>1 ,\; \omega_j^{\; ab} = \tilde \omega_{j-1}^{(a-1)(b-1)}.
$$
Summing up, we get $D_1 = \partial_r$ and for all $j>1$,
$$
D_j = \partial_j +2 i\varphi' \tilde e_{j-1}^{\; a} \Sigma_{1(a+1)} + i   \tilde \omega_{j-1}^{(a-1)(b-1)} \Sigma_{a,b}.
$$
Since 
$$
\Sigma_{1,(a+1)} = \begin{pmatrix}
\tilde \Sigma_{0,a} & (0) \\ (0) & \tilde \Sigma_{0,a}
\end{pmatrix} \textrm{ and } \Sigma_{a,b}  = \begin{pmatrix}
\tilde \Sigma_{a-1,b-1} & (0) \\ (0) & \tilde \Sigma_{a-1,b-1}
\end{pmatrix},
$$
we have
$$
D_j = \begin{pmatrix}
\tilde D_{j-1} + 2i\varphi'\tilde e_{j-1}^{\; a} \tilde \Sigma_{0,a}& (0) \\ (0) & \tilde D_{j-1} +2i\varphi'\tilde e_{j-1}^{\; a} \tilde \Sigma_{0,a}
\end{pmatrix}
$$
where $\tilde D_j$ are covariant derivatives for spinor bundles over $\K^2$.

We deduce
\begin{multline*}
ie^j_{\; b} \alpha^b D_j = i\alpha_1\partial_r + i\frac1{\varphi} \tilde e^{j-1}_{\; b} \begin{pmatrix}
(0) & \tilde \alpha^b \\ \tilde \alpha^b & (0) 
\end{pmatrix} \begin{pmatrix}
\tilde D_{j-1} + 2i\varphi'\tilde e_{j-1}^{\; a}\tilde \Sigma_{0,a}& (0) \\ (0) & \tilde D_{j-1} +2i\varphi'\tilde e_{j-1}^{\; a} \tilde\Sigma_{0,a}
\end{pmatrix} \\ =
i\alpha_1\partial_r +\frac1{r} \begin{pmatrix}
(0) & i \tilde e^j_{\; b}\tilde \alpha^b (\tilde D_j + i\tilde e_j^{\; a}\tilde \Sigma_{0,a}) \\ i \tilde e^j_{\; b}\tilde \alpha^b (\tilde D_j + i\tilde e_j^{\; a}\tilde \Sigma_{0,a}) &(0)
\end{pmatrix}.
\end{multline*}
Using that $\tilde e^j_{\; b}\tilde e_j^{\; a} = \delta^{a}_{\; b}$ and that $\tilde \alpha^a \tilde \Sigma_{0,a} =  -i \frac{n-1}4 \tilde \alpha^0 $, we deduce 
$$
\mathcal{D}  = m \alpha^0 + i\alpha_1\partial_r + \begin{pmatrix}
(0) & \frac1{\varphi(r)}\mathcal{D}_{\K^{n-1}} + i\frac{n-1}{2}\frac{\varphi'(r)}{\varphi(r)} \tilde\alpha^0 \\  \frac1{\varphi(r)}\mathcal{D}_{\K^{n-1}} + i\frac{n-1}{2}\frac{\varphi'(r)}{\varphi(r)} \tilde\alpha^0&(0)
\end{pmatrix}
$$
where $\mathcal{D}_{\K^{n-1}}$ is the Dirac operator on $\mathbb{K}^{n-1}$.  We thus get
\begin{equation}\label{ridir}
\mathcal{D} = \begin{pmatrix}
m &  i\tilde\alpha^0 \Big( \partial_r + \frac{n-1}{2}\frac{\varphi'(r)}{\varphi(r)} \Big) + \frac1{\varphi(r)}\mathcal{D}_{\K^{n-1}} \\  i\tilde\alpha^0 \Big( \partial_r + \frac{n-1}{2}\frac{\varphi'(r)}{\varphi(r)} \Big) + \frac1{\varphi(r)}\mathcal{D}_{\K^{n-1}}  & -m
\end{pmatrix}.
\end{equation}
Now the key step (for us) consists in ensuring that the operator $\mathcal{D}_{\K^{n-1}}$ can in fact be diagonalized. In the case $\K^{n-1}$ being the $2$-dimensional unit sphere, this fact is classical and well-known, the eigenvalues and eigenfunctions are explicit (see e.g. \cite{thaller} or \cite{camporesi}). In the general case, we can nevertheless evoke the following result,  that can be found e.g in \cite[Theorem 5.27]{roe}.

\begin{proposition}
Let $\K^{n-1}$ be a smooth compact manifold and let $H$ be a Dirac operator on it. Then there is a direct sum decomposition of $H$ into a sum of countably many orthogonal spaces $H_\mu$, each of which is a finite-dimensional space of smooth sections and is an eigenspace for $H$ with eigenvalue $\mu$. The eigenvalues $\mu$ form a discrete subset of $\mathbb{R}$.
\end{proposition}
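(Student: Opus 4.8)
This is the standard spectral theory of Dirac-type (more generally, first-order elliptic self-adjoint) operators on a closed manifold; the plan is to combine essential self-adjointness, compactness of the resolvent, and elliptic regularity. Throughout, $H$ acts on smooth sections of the spinor bundle $E\to\K^{n-1}$, and by the very definition of a Dirac operator it is formally self-adjoint for the natural Hermitian $L^2(E)$ inner product. Its principal symbol is Clifford multiplication, $\sigma(H)(\xi)=i\,c(\xi)$, which is invertible for every $\xi\neq 0$; hence $H$ is elliptic.

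First I would record the analytic input: since $\K^{n-1}$ is compact (hence complete and without boundary), the symmetric operator $H$ on $C^\infty(E)\subset L^2(E)$ is essentially self-adjoint, and its unique self-adjoint closure — still denoted $H$ — has domain the first Sobolev space $W^{1,2}(E)$, by the elliptic a priori estimate $\|u\|_{W^{1,2}}\leq C\big(\|Hu\|_{L^2}+\|u\|_{L^2}\big)$. In particular the resolvents $(H\pm i)^{-1}$ map $L^2(E)$ boundedly into $W^{1,2}(E)$.

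Next I would invoke the Rellich--Kondrachov theorem on the closed manifold $\K^{n-1}$: the inclusion $W^{1,2}(E)\hookrightarrow L^2(E)$ is compact. Composing, $(H\pm i)^{-1}$ is a compact self-adjoint operator on $L^2(E)$, so $H$ has compact resolvent. The spectral theorem for self-adjoint operators with compact resolvent then yields precisely the asserted structure: $L^2(E)=\bigoplus_{\mu}H_\mu$ as a Hilbert-space orthogonal direct sum of the eigenspaces $H_\mu=\ker(H-\mu)$, each of finite dimension, with the set of eigenvalues $\{\mu\}$ a real set having no finite accumulation point, i.e.\ a discrete subset of $\R$. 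Finally, smoothness of the eigensections follows by an elliptic regularity bootstrap: if $u\in L^2(E)$ solves $Hu=\mu u$ in the distributional sense, then $Hu=\mu u\in W^{k,2}(E)$ forces $u\in W^{k+1,2}(E)$ for each $k\geq 0$ by induction, so $u\in\bigcap_k W^{k,2}(E)=C^\infty(E)$ by the Sobolev embedding theorem.

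The only step that is not completely formal is the essential self-adjointness of a first-order operator on a manifold without boundary, but on a closed manifold this is automatic; alternatively one can sidestep it entirely by diagonalizing the nonnegative Laplace-type operator $H^2$ (whose Friedrichs extension has compact resolvent for the same reasons) and then diagonalizing $H$ inside each of its finite-dimensional eigenspaces. In either route no genuine obstacle arises, which is why the statement can simply be quoted from \cite[Theorem 5.27]{roe}.
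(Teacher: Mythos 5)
Your argument is correct and is the standard proof: ellipticity and formal self-adjointness of the Dirac operator, essential self-adjointness on the closed manifold, compactness of the resolvent via the elliptic estimate and Rellich--Kondrachov, the spectral theorem for self-adjoint operators with compact resolvent, and an elliptic bootstrap for smoothness of eigensections. The paper itself supplies no proof at all --- it simply quotes the result from \cite[Theorem 5.27]{roe} --- so your write-up is exactly the argument that citation stands for, and there is nothing to add or correct.
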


Let $\mu > 0$ be in the spectrum of $\mathcal{D}_{\K^{n-1}}$, we fix $(\psi_{\mu,j})_{j}$ an orthogonal basis of the eigenspace of $\mathcal{D}_{\K^{n-1}}$ with eigenvalue $\mu$. We set $\psi_{-\mu,j} = i\tilde \alpha^0 \psi_{\mu,j}$. Since $\tilde \alpha^0$ anticommutes with $\mathcal{D}_{\K^{n-1}}$, we get that $\psi_{-\mu,j}$ is an eigenfunction of $\mathcal{D}_{\K^{n-1}}$ with eigenvalue $-\mu$. Note that for all $\mu $ in the spectrum of $\mathcal{D}_{\K^{n-1}}$, since $(i\alpha^0)^2 = -1$, we have for $\mu > 0$, $(1+i\tilde \alpha^0) \psi_{\mu,j} = \psi_{\mu,j} + \psi_{-\mu,j} $ and $(1-i\tilde \alpha^0)\psi_{-\mu,j} = \psi_{-\mu,j} - \psi_{\mu,j}$. Similarly,
\[
(1-i\tilde \alpha^0) \psi_{\mu,j} = \psi_{\mu,j} - \psi_{-\mu,j} \quad \textrm{and } \quad (1-i\tilde \alpha^0) \psi_{-\mu,j} = \psi_{-\mu,j} + \psi_{\mu,j}.
\]
 Therefore, the family
$$
\mathcal B = \left( \begin{pmatrix}\frac{1+i\tilde \alpha^0}{\sqrt 2} \psi_{\mu,j} \\0 \end{pmatrix},\begin{pmatrix} 0 \\ -\frac{1-i\tilde \alpha^0}{\sqrt 2} \psi_{\mu,j} \end{pmatrix} \right)_{\mu \in Sp(\mathcal{D}_{\mathbb{K}^{n-1}}),j}
$$
forms an orthonormal basis of $L^2 (\K^{n-1}, \C^M)$.

We deduce that we have the decomposition
\[
L^2(\Sigma, \C^M) = \bigoplus_{\mu,j} \mathcal H_{\mu,j}
\]
where $\mathcal H_{\mu,j}$ is the tensor product of $L^2_\varphi$ ( the $L^2$ maps of $\R_+$ with measure $\varphi^2 dr$) and with values in $\C$ ; and the vector space generated by 
\[
\left( \begin{pmatrix}\frac{1+i\tilde \alpha^0}{\sqrt 2} \psi_{\mu,j} \\0 \end{pmatrix},\begin{pmatrix} 0 \\ -\frac{1-i\tilde \alpha^0}{\sqrt 2} \psi_{\mu,j} \end{pmatrix} \right).
\]

In other words, any map $u\in L^2(\Sigma, \C^M)$ may be written as
\[
u(r,\omega) = \sum_{\mu,j} u_{\mu,j}^+ (r) \begin{pmatrix}\frac{1+i\tilde \alpha^0}{\sqrt 2} \psi_{\mu,j}(\omega) \\0 \end{pmatrix} + u_{\mu,j}^- (r)\begin{pmatrix} 0 \\ -\frac{1-i\tilde \alpha^0}{\sqrt 2} \psi_{\mu,j}(\omega) \end{pmatrix}
\]
where $\omega\in \K^{n-1}$, and $u_{\mu,j}^\pm \in L^2_{\varphi}$ are such that
\[
\sum_{\mu,j} \|u_{\mu,j}^+ (r)\|_{L^2_\varphi}^2 + \|u_{\mu,j}^- (r)\|_{L^2_\varphi}^2 < \infty.
\]

 For any $\mu \in Sp(\mathcal{D}_{\K^{n-1}})$ and any $j$, and $f(r)$ a radial test function, we have
\begin{multline*}
\mathcal{D} \Big[f \begin{pmatrix} (1+i\tilde \alpha^0 ) \psi_{\mu,j} \\ 0 \end{pmatrix} \Big]= \\
mf  \begin{pmatrix} (1+i\tilde \alpha^0 ) \psi_{\mu,j} \\ 0 \end{pmatrix} + \left(\Big[ -\Big(\partial_r + \frac{n-1}{2}\frac{\varphi'(r)}{\varphi(r)} \Big) + \frac{\mu}{\varphi(r)} \Big] f \right) \begin{pmatrix} 0 \\ (1-i\tilde \alpha^0 ) \psi_{\mu,j} \end{pmatrix}
\end{multline*}
and 
\begin{multline*}
\mathcal{D} \Big[f  \begin{pmatrix} 0 \\ (1-i\tilde \alpha^0 ) \psi_{\mu,j} \end{pmatrix} \Big]=\\
 -mf  \begin{pmatrix} 0 \\ (1-i\tilde \alpha^0 ) \psi_{\mu,j} \end{pmatrix} +\left( \Big[ \Big( \partial_r + \frac{n-1}{2}\frac{\varphi'(r)}{\varphi(r)} \Big) + \frac{\mu}{\varphi(r)} \Big] f \right) \begin{pmatrix} (1+i\tilde \alpha^0 ) \psi_{\mu,j}  \\ 0 \end{pmatrix}.
\end{multline*}
We are thus left with studying the dispersion of the equation
\begin{equation}\label{systf}
i\partial_t F+h_\mu F=0,
\end{equation}
with 
\begin{equation}\label{reducedgeneraldirac}
h_\mu=
\begin{pmatrix} m & -\Big(\partial_r +  \frac{n-1}{2}\frac{\varphi'(r)}{\varphi(r)} \Big) + \frac{\mu}{\varphi(r)} \\\Big(\partial_r +  \frac{n-1}{2}\frac{\varphi'(r)}{\varphi(r)} \Big) + \frac{\mu}{\varphi(r)}& -m \end{pmatrix}
\end{equation}
for any $\mu \in Sp(\mathcal{D}_{\mathbb{K}^{n-1}})$.

 \begin{remark}\label{rem:selfadjointness}
If we take $\psi_\mu$ an eigenfunction of $\mathcal{D}_{\mathbb{K}^{n-1}}$ on $\mathbb{K}^{n-1}$ with eigenvalue $\mu\neq 0$ and we suppose that $\theta=f(r) \begin{pmatrix} (1+i\tilde \alpha^0 ) \psi_\mu \\ 0 \end{pmatrix} $ is an eigenspinor of $\mathcal{D}^2$ with eigenvalue $\rho^2\neq 0$, then we have that $f$ satisfies the following ODE
$$
f''+\frac{n-1}r+\left[\rho^2-\left(\mu^2-\mu-\frac{n^2-4n+3}4\right)\frac1{r^2}\right]f=0
$$
which has solutions $f(r)=\gamma^c J_{\pm\nu^+}(\rho r)$, where $c=(2-n)/2$, $\nu^+=|2\mu-1|/2$ and $J_{\nu^+}$ is the standard Bessel function of order $\nu^+$. 
 Analogously, assuming that now $\theta=f(r) \begin{pmatrix} 0 \\  (1-i\tilde \alpha^0 ) \psi_\mu  \end{pmatrix} $ is an eigenspinor of $\mathcal{D}^2$ with eigenvalue $\rho^2\neq 0$, we see that $f$ satisfies the ODE
 $$
f''+\frac{n-1}r+\left[\rho^2-\left(\mu^2+\mu-\frac{n^2-4n+3}4\right)\frac1{r^2}\right]f=0
$$
 which has solutions $f(r)=r^c J_{\pm\nu^+}(\rho r)$, with $c$ as before and $\nu^+=|2\mu+1|/2$. These equations recall once more the connection with the Klein-Gordon equation, which now has been brought at the ``radial" level. In particular, in \cite{kg} these equations are the starting point in order to prove the crucial local smoothing estimates for the Klein-Gordon equation; nevertheless, we stress once again the fact that the argument of deducing dispersive estimates for the Dirac flow from the corresponding Klein-Gordon ones does not work for free, as indeed the Laplace operator that comes into play when squaring the Dirac operator is the spinorial one (and not the standard scalar one that we dealt with in \cite{kg}).

We can explicitly write down the positive and negative eigenspinors of the operator $\mathcal{D}_{\K^{n-1}}$, when we are calling ``positive" (resp. ``negative") the ones corresponding to Bessel functions of positive (resp. negative) order. It can then be shown that both positive and negative  ones fall in the domain of $\mathcal{D}_{\K^{n-1}}$. The negative ones, though, correspond to eigenvalues $\mu$ of $\mathcal{D}_{\K^{n-1}}$ such that $|\mu|\leq1/2$ (this can be seen by studying the asymptotic behaviours of the Bessel functions). Finally, negative solutions in the domain of $\mathcal{{D}}_{\K^{n-1}}$ prevent the operator $\mathcal{D}_{\K^{n-1}}$ from being selfadjoint. This is the reason why we need the assumption $|\mu|> 1/2$.
\end{remark}

\subsection{The squaring trick and weighted spinors}\label{squaresec}

We now introduce {\em weighted spinors}, the main goal being transforming the system \eqref{systf} into a system of wave equations on $\mathbb{R}^n$ perturbed by a radial, electric potential, in order to exploit the existing theory to obtain dispersive estimates. This strategy has been already employed in \cite{banduy,danzha} in different contexts (the Schr\"odinger equation on spherically symmetric manifolds and equivariant wave maps respectively) and in the predecessor of this paper, \cite{cacdescurv1}, to deal with the local-in-time case.

Take $\sigma  : \R_+ \rightarrow \R_+$ such that for all $r>0$,
$$
\sigma (r) = \frac{r}{\varphi (r)}
$$ 
where $\varphi(r)$ satisfies the assumptions of Theorem \ref{teo1} and write, for $n\geq3$,
$$
\sigma_n = \sigma^{(n-1)/2}.
$$

\begin{lemma}\label{lem:smoothnessofsigma} The map $\sigma $ prolonged by continuity at $0$ is $\mathcal C^1$ and the map
\[
\frac{\sigma'}{\sigma}
\]
is bounded on $(0,\infty)$.
\end{lemma}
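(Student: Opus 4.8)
The plan is to pass to the reciprocal. Set $\psi := 1/\sigma$, that is $\psi(r) = \varphi(r)/r$ for $r>0$; I will show that $\psi$ extends to a strictly positive function of class $\mathcal C^1$ on $[0,\infty)$, after which $\sigma = 1/\psi$ automatically enjoys the same regularity and one has the convenient identity $\sigma'/\sigma = -\psi'/\psi$.

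The first — and only slightly delicate — step is the behaviour at the origin, where $\sigma(r)=r/\varphi(r)$ is an indeterminate $0/0$ form. The key observation is an integral representation: since $\varphi(0)=0$, for every $r>0$ we have $\varphi(r)=\int_0^r\varphi'(s)\,ds = r\int_0^1\varphi'(rt)\,dt$, hence $\psi(r)=\int_0^1\varphi'(rt)\,dt$, and the right-hand side takes the value $\varphi'(0)=1$ at $r=0$, so this formula furnishes the continuous extension of $\psi$. Because $\varphi\in\mathcal C^2$, differentiation under the integral sign is legitimate and gives $\psi\in\mathcal C^1([0,\infty))$ with $\psi'(r)=\int_0^1 t\,\varphi''(rt)\,dt$, a function continuous up to $r=0$; in particular $\psi$ is $\mathcal C^1$ and $\psi(0)=1$. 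Since moreover $\varphi$ is positive on $(0,\infty)$, the function $\psi$ is strictly positive on all of $[0,\infty)$, and therefore $\sigma=1/\psi$ is $\mathcal C^1$ on $[0,\infty)$ (with $\sigma(0)=1$ and, if one wishes, $\sigma'(0)=-\psi'(0)=-\tfrac12\varphi''(0)$). This establishes the first assertion.

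For the second assertion I would bound $\sigma'/\sigma=-\psi'/\psi$ separately on two regions. Fix $\varepsilon_0>0$ such that $\varphi'/\varphi$ is bounded on $[\varepsilon_0,\infty)$ (such an $\varepsilon_0$ exists by hypothesis). On the compact interval $[0,\varepsilon_0]$ the function $\psi$ is continuous and positive, hence bounded below by a positive constant, while $\psi'$ is continuous, hence bounded; thus $\psi'/\psi$ is bounded there, in particular on $(0,\varepsilon_0]$. On $[\varepsilon_0,\infty)$ one computes directly, for $r>0$, that $\dfrac{\psi'(r)}{\psi(r)}=\dfrac{r\varphi'(r)-\varphi(r)}{r^2}\cdot\dfrac{r}{\varphi(r)}=\dfrac{\varphi'(r)}{\varphi(r)}-\dfrac1r$, and both summands are bounded on $[\varepsilon_0,\infty)$ — the first by assumption, the second by $1/\varepsilon_0$. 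Patching the two estimates yields boundedness of $\sigma'/\sigma$ on all of $(0,\infty)$.

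I expect the only point genuinely requiring care to be the regularity at $r=0$; the representation $\varphi(r)/r=\int_0^1\varphi'(rt)\,dt$ handles it cleanly and avoids any explicit discussion of Taylor remainders, while simultaneously yielding continuity of $\sigma'$ at $0$. Everything away from the origin is routine, resting only on the hypothesis that $\varphi'/\varphi$ is bounded outside a neighbourhood of $0$ and on compactness of the complementary bounded interval.
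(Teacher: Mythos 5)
Your proof is correct. It follows the same overall structure as the paper's — regularity at the origin from the $\mathcal C^2$ hypothesis on $\varphi$, boundedness of $\sigma'/\sigma$ by combining continuity near $0$ with the assumption on $\varphi'/\varphi$ away from $0$ — but your technical device at the origin is different and arguably cleaner. The paper writes the Taylor expansion $\varphi(r)=r+ar^2+o(r^2)$ with $a=\varphi''(0)/2$ and checks separately that the difference quotient of $\sigma$ at $0$ and that $\sigma'(r)$ both converge to $-a$; your Hadamard-type representation $\varphi(r)/r=\int_0^1\varphi'(rt)\,dt$ delivers $\psi=1/\sigma\in\mathcal C^1([0,\infty))$ in one stroke, with continuity of the derivative up to $r=0$ coming for free from differentiation under the integral sign, and your value $\sigma'(0)=-\varphi''(0)/2$ matches the paper's $-a$. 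For the second assertion both arguments reduce to the identity $\sigma'/\sigma=\tfrac1r-\tfrac{\varphi'}{\varphi}$ (your $-\psi'/\psi$); your explicit two-region splitting just makes precise what the paper leaves terse. The only hypothesis you invoke that is not written verbatim in the statement of Theorem \ref{teo1} is strict positivity of $\varphi$ on $(0,\infty)$, but this is implicit there (it is needed for $\varphi'/\varphi$ and $\sigma$ to make sense at all), so there is no gap.
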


\begin{proof} Indeed, for $r\geq 0$, 
\[
\sigma'(r) = \Big( \frac1{r} - \frac{\varphi'(r)}{\varphi(r)}\Big) \sigma.
\]
The map $\sigma$ at $0$ converges to $1$ and we have, as $r\downarrow0$, writing $a= \frac{\varphi''(0)}{2}$,
\[
\frac{\sigma(r)-1}{r} = \frac1{r} \Big( \frac{r}{r + ar^2 +o(r^2)} -1\Big) \rightarrow - a
\]
hence $\sigma'(0) = -a$. What is more, as $r\rightarrow 0$,
\[
\sigma'(r) = \frac1{\varphi(r)} - \frac{r\varphi'(r)}{\varphi^2} = \frac1{r}(-ar +o(r)) \rightarrow -a.
\]
Finally, since $\sigma \rightarrow 1 $ at $0$ and $\sigma >0$, we deduce that $\frac{\sigma'}{\sigma}$ is continuous on $[0,\infty)$.

Finally,
\[
\frac{\sigma'(r)}{\sigma(r)} = \frac1{r} - \frac{\varphi'}{\varphi}
\]
which ensures its boundedness.
\end{proof}

\begin{lemma}\label{lem:normHs} The multiplication by $\sigma_n$ is an isometry from $L^2_{r}$ to $L^2_{\varphi}$. What is more, the multiplication by $\sigma_{n}$ is an isomorphism from $H^1_r$ to $H^1_\varphi$ that satisfies
\[
 \Big[ 1+  c_\varphi \frac{n-1}{2} \Big]^{-1} \|f\|_{H^1_{r}} \leq \|\sigma_{n}f\|_{H^1_{\varphi}} \leq \Big[ 1+ c_\varphi \frac{n-1}{2}\Big] \|f\|_{H^1_{r}}
\]
with $c_\varphi  = \|\frac{\sigma'}{\sigma}\|_{L^\infty((0,\infty))}$.
In particular, by interpolation, we get, for all $s\in [0,1]$,
\[
 \Big[ 1+  c_\varphi \frac{n-1}{2}\Big]^{-s} \|f\|_{H^s_{r}} \leq \|\sigma_{n}f\|_{H^s_{\varphi}} \leq \Big[ 1+ c_\varphi \frac{n-1}{2} \Big]^s \|f\|_{H^s_{r}}.
\]
\end{lemma}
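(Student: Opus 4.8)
The plan is to reduce everything to the single change of weight
\[
\sigma_n(r)^2\,\varphi(r)^{n-1}=r^{n-1},
\]
which is immediate from $\sigma_n=\sigma^{(n-1)/2}$ and $\sigma=r/\varphi$. First I would prove the $L^2$ statement directly: for any radial $f$,
\[
\|\sigma_n f\|_{L^2_\varphi}^2=\int_0^\infty|f(r)|^2\,\sigma_n(r)^2\,\varphi(r)^{n-1}\,dr=\int_0^\infty|f(r)|^2\,r^{n-1}\,dr=\|f\|_{L^2_r}^2,
\]
so multiplication by $\sigma_n$ is an isometry from $L^2_r$ onto $L^2_\varphi$, its inverse being the (equally isometric) multiplication by $\sigma_n^{-1}$.

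Next I would treat the $H^1$ statement by differentiating the product. By Lemma~\ref{lem:smoothnessofsigma}, $\sigma$ is $\mathcal C^1$ on $[0,\infty)$ and strictly positive; since $n\geq 3$ the map $x\mapsto x^{(n-1)/2}$ is $\mathcal C^1$ on $[0,\infty)$, so $\sigma_n$ is $\mathcal C^1$ and strictly positive, with
\[
\frac{\sigma_n'}{\sigma_n}=\frac{n-1}{2}\,\frac{\sigma'}{\sigma},\qquad \Big\|\frac{\sigma_n'}{\sigma_n}\Big\|_{L^\infty((0,\infty))}=\frac{n-1}{2}\,c_\varphi<\infty .
\]
Hence for a radial test function $f$ one has the product-rule identity
\[
\partial_r(\sigma_n f)=\sigma_n\Big(\partial_r f+\tfrac{n-1}{2}\tfrac{\sigma'}{\sigma}\,f\Big),
\]
and applying the $L^2_r\to L^2_\varphi$ isometry to the right-hand side gives $\|\partial_r(\sigma_n f)\|_{L^2_\varphi}=\|\partial_r f+\tfrac{n-1}{2}\tfrac{\sigma'}{\sigma}f\|_{L^2_r}$. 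The upper bound now follows from the triangle inequality $\|\partial_r f+\tfrac{n-1}{2}\tfrac{\sigma'}{\sigma}f\|_{L^2_r}\le\|\partial_r f\|_{L^2_r}+\tfrac{n-1}{2}c_\varphi\|f\|_{L^2_r}$ together with $\|\sigma_n f\|_{L^2_\varphi}=\|f\|_{L^2_r}$, which yields $\|\sigma_n f\|_{H^1_\varphi}\le(1+\tfrac{n-1}{2}c_\varphi)\|f\|_{H^1_r}$. For the lower bound I would instead solve $\partial_r f=\sigma_n^{-1}\partial_r(\sigma_n f)-\tfrac{n-1}{2}\tfrac{\sigma'}{\sigma}f$ and estimate $\|f\|_{H^1_r}$ in terms of $\|\sigma_n f\|_{H^1_\varphi}$ the same way, getting $\|f\|_{H^1_r}\le(1+\tfrac{n-1}{2}c_\varphi)\|\sigma_n f\|_{H^1_\varphi}$. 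A routine density argument (radial test functions are dense in $H^1_r$) then extends both inequalities to all of $H^1_r$ and identifies multiplication by $\sigma_n$ as an isomorphism $H^1_r\to H^1_\varphi$ with the asserted norm bounds.

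Finally, for $s\in(0,1)$ I would interpolate. Multiplication by $\sigma_n$ is bounded from $L^2_r$ to $L^2_\varphi$ with norm $1$ and from $H^1_r$ to $H^1_\varphi$ with norm at most $1+\tfrac{n-1}{2}c_\varphi$; since $H^s_r$ and $H^s_\varphi$ are by definition the interpolation spaces between the respective $L^2$ and $H^1$ spaces, complex interpolation gives a bound $(1+\tfrac{n-1}{2}c_\varphi)^s$ from $H^s_r$ to $H^s_\varphi$, and applying the same argument to the inverse (which is an $L^2$-isometry and is bounded $H^1_\varphi\to H^1_r$ with norm at most $1+\tfrac{n-1}{2}c_\varphi$) furnishes the matching lower bound $\|\sigma_n f\|_{H^s_\varphi}\ge(1+\tfrac{n-1}{2}c_\varphi)^{-s}\|f\|_{H^s_r}$. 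I do not expect a genuine obstacle here; the only delicate point is justifying the product rule and the density step in the $H^1$ part, and this is precisely what Lemma~\ref{lem:smoothnessofsigma} provides, namely that $\sigma_n$ is a $\mathcal C^1$ function whose logarithmic derivative is bounded.
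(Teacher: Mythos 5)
Your proposal is correct and follows essentially the same route as the paper: the weight identity $\sigma_n^2\varphi^{n-1}=r^{n-1}$ for the $L^2$ isometry, the product rule with the logarithmic derivative $\sigma_n'\sigma_n^{-1}=\frac{n-1}{2}\frac{\sigma'}{\sigma}$ bounded by $\frac{n-1}{2}c_\varphi$ for the two-sided $H^1$ bound, and interpolation for $s\in(0,1)$. The extra care you take with the $\mathcal C^1$ regularity of $\sigma_n$ and the density step is fine but not something the paper dwells on.
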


\begin{proof} The fact that $\sigma_{n}$ is an isometry at the $L^2$-level follows by the definition of the norms, as indeed
\[
\|\sigma_{n}f\|_{L^2_{\varphi}}^2 = \int \sigma_{n}^2 f^2 \varphi^{n-1}dr = \int r^{n-1}f^2 dr = \|f\|_{L^2_{r}}^2.
\]

We now estimate $\|\sigma_{n} f\|_{H^1_{\varphi}}$. We have, by the isometry in $L^2$,
\[
\|\sigma_{n} f\|_{H^1_{\varphi}} \leq  \|f\|_{H^1_{r}} + \|\sigma_{n}' \sigma_{n}^{-1}f\|_{L^2_{r}}.
\]
A direct computation yields
\[
\sigma_{n}' \sigma_{n}^{-1} = \frac{n-1}{2}\frac{\sigma'}{\sigma} .
\]
By H\"{o}lder's inequality, we get
\[
\|\sigma_{n}' \sigma_{n}^{-1}f\|_{L^2_{r}}\leq \frac{n-1}{2} c_\varphi \|f\|_{L^2_{r}}.
\]
We now estimate $\|\sigma_{n}^{-1} g\|_{H^1_{r}}$. We have, by isometry in $L^2$,
\[
\|\sigma_{n}^{-1} g\|_{H^1_{r}} \leq \|g\|_{H^1_{\varphi}} + \|(\sigma_{n}^{-1})'\sigma_{n}g\|_{H^1_{\varphi}}.
\]
We have 
\[
(\sigma_{n}^{-1})'\sigma_{n} = -\frac{\sigma_{n}'}{\sigma_{n}} =- \frac{n-1}{2}\frac{\sigma'}{\sigma}  .
\]
We use H\"{o}lder's inequality to get
\[
\|(\sigma_{n}^{-1})'\sigma_{n}g\|_{H^1_{\varphi}}\leq \frac{n-1}{2} c_\varphi \|g\|_{L^2_{\varphi}}.
\]
This concludes the proof.
\end{proof}

\begin{lemma}We have 
\begin{equation}\label{tildh}
 h_{\mu,n}:=\sigma_{n}^{-1} h_{\mu} \sigma_{n} = \begin{pmatrix} m & -\partial_r -\frac{n-1}{2r} + \frac{\mu}{\varphi} \\ \partial_r +\frac{n-1}{2r} + \frac{\mu}{\varphi} & -m \end{pmatrix}.
\end{equation}
\end{lemma}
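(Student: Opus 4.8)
The identity is a direct conjugation computation, so the plan is simply to compute $\sigma_n^{-1}h_\mu\sigma_n$ entry by entry. First observe that the diagonal entries $\pm m$ and the zeroth-order part $\frac{\mu}{\varphi}$ of the off-diagonal entries are multiplication operators (by scalars), hence commute with multiplication by $\sigma_n$; conjugating them by $\sigma_n$ leaves them unchanged. Thus the only thing to track is how the first-order operator $\pm\big(\partial_r + \frac{n-1}{2}\frac{\varphi'}{\varphi}\big)$ transforms under conjugation.

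For that, I would use the elementary identity $\sigma_n^{-1}\partial_r\sigma_n = \partial_r + \frac{\sigma_n'}{\sigma_n}$, so that
\[
\sigma_n^{-1}\Big(\partial_r + \tfrac{n-1}{2}\tfrac{\varphi'}{\varphi}\Big)\sigma_n = \partial_r + \frac{\sigma_n'}{\sigma_n} + \frac{n-1}{2}\frac{\varphi'}{\varphi}.
\]
Now invoke the two preceding lemmas: from the proof of Lemma \ref{lem:normHs} we have $\frac{\sigma_n'}{\sigma_n} = \frac{n-1}{2}\frac{\sigma'}{\sigma}$, and from Lemma \ref{lem:smoothnessofsigma} we have $\frac{\sigma'}{\sigma} = \frac1r - \frac{\varphi'}{\varphi}$. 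Substituting gives
\[
\frac{\sigma_n'}{\sigma_n} + \frac{n-1}{2}\frac{\varphi'}{\varphi} = \frac{n-1}{2}\Big(\frac1r - \frac{\varphi'}{\varphi}\Big) + \frac{n-1}{2}\frac{\varphi'}{\varphi} = \frac{n-1}{2r},
\]
so the conjugated first-order operator is exactly $\partial_r + \frac{n-1}{2r}$ (and $-\partial_r - \frac{n-1}{2r}$ for the upper-right entry). Assembling the four entries yields the matrix in \eqref{tildh}.

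There is no real obstacle here — the computation is routine once the cancellation $\frac{n-1}{2}\frac{\sigma'}{\sigma} + \frac{n-1}{2}\frac{\varphi'}{\varphi} = \frac{n-1}{2r}$ is identified; the only point worth a word of care is that the conjugation is interpreted on radial test functions (where $\sigma_n$ is smooth and nonvanishing by Lemma \ref{lem:smoothnessofsigma}), and that $\frac\mu\varphi$ and $\pm m$ genuinely commute with the scalar multiplication $\sigma_n$, which is what makes the off-diagonal zeroth-order and diagonal terms invariant.
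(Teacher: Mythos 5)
Your computation is correct and matches the paper, whose proof of this lemma is simply ``Straightforward computation.'' The key cancellation $\frac{n-1}{2}\frac{\sigma'}{\sigma}+\frac{n-1}{2}\frac{\varphi'}{\varphi}=\frac{n-1}{2r}$ is exactly the point, and your appeal to Lemmas \ref{lem:smoothnessofsigma} and \ref{lem:normHs} for the identities $\frac{\sigma'}{\sigma}=\frac1r-\frac{\varphi'}{\varphi}$ and $\frac{\sigma_n'}{\sigma_n}=\frac{n-1}{2}\frac{\sigma'}{\sigma}$ is the intended route.
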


\begin{proof} Straightforward computation. \end{proof}

\begin{proposition}\label{prop:continuityandconstants} Let $s \in [-1,1]$, and $p,q \geq 1$. If $e^{-it h_{\mu,n}}$ is a continuous operator from $H_{r}^{1/2}$ to $ L^p(\R, W^{s,q}_{r})$, then $e^{-it h_{\mu}}$ is a continuous operator from $H_{\varphi}^{1/2}$ to  $\sigma_{n}^{1-2/q} L^p(\R, W^{s,q}_{\varphi})$ and 
$$
\big\| e^{-it h_{\mu}} \big\|_{H_{\varphi}^{1/2}\rightarrow \sigma_{n}^{1-2/q} L^p(\R, W^{s,q}_{\varphi})} \leq C_\varphi \big\| e^{-it  h_{\mu,n}} \big\|_{H_{r}^{1/2}\rightarrow  L^p(\R, W^{s,q}_{r})}
$$ 
with a constant $C_\varphi$ that does not depend on $\mu$.
\end{proposition}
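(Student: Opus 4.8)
The plan is to deduce everything from the conjugation identity \eqref{tildh}, $h_{\mu,n} = \sigma_n^{-1} h_\mu \sigma_n$, which upon exponentiating yields $e^{-ith_\mu} = \sigma_n\, e^{-ith_{\mu,n}}\, \sigma_n^{-1}$. Concretely, if $F(t) = e^{-ith_\mu}v_0$ solves \eqref{systf} with datum $v_0$, then since $\sigma_n$ is a time-independent invertible multiplication operator and $\sigma_n^{-1}h_\mu = h_{\mu,n}\sigma_n^{-1}$, the function $G(t) := \sigma_n^{-1}F(t)$ solves $i\partial_t G + h_{\mu,n}G = 0$ with datum $\sigma_n^{-1}v_0$; by uniqueness $G(t) = e^{-ith_{\mu,n}}(\sigma_n^{-1}v_0)$, hence $F(t) = \sigma_n\, e^{-ith_{\mu,n}}(\sigma_n^{-1}v_0)$. (Equivalently one uses $f(\sigma_n A\sigma_n^{-1}) = \sigma_n f(A)\sigma_n^{-1}$ for the self-adjoint operators discussed in Remark \ref{rem:selfadjointness}, $\sigma_n\colon L^2_r\to L^2_\varphi$ being unitary by Lemma \ref{lem:normHs}.) Since $\sigma_n$ does not depend on $\mu$, every constant produced below is $\mu$-independent.

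Next I would assemble the bound by composing three mappings. By Lemma \ref{lem:normHs}, $\sigma_n^{-1}$ is bounded from $H^{1/2}_\varphi$ to $H^{1/2}_r$; by hypothesis $e^{-ith_{\mu,n}}$ is bounded from $H^{1/2}_r$ to $L^p(\R, W^{s,q}_r)$; and — this is the one new ingredient — multiplication by $\sigma_n^{2/q}$ is bounded from $W^{s,q}_r$ to $W^{s,q}_\varphi$, which (with $\sigma_n$ time-independent) is exactly the statement that multiplication by $\sigma_n$ maps $L^p(\R, W^{s,q}_r)$ into $\sigma_n^{1-2/q}L^p(\R, W^{s,q}_\varphi)$. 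Unwinding the definition of the weighted target space and combining,
\[
\|\sigma_n^{2/q-1}\, e^{-ith_\mu}v_0\|_{L^p(\R, W^{s,q}_\varphi)} = \|\sigma_n^{2/q}\, e^{-ith_{\mu,n}}(\sigma_n^{-1}v_0)\|_{L^p(\R, W^{s,q}_\varphi)} \leq C_\varphi \|v_0\|_{H^{1/2}_\varphi},
\]
with $C_\varphi$ a multiple of $\|e^{-ith_{\mu,n}}\|_{H^{1/2}_r\to L^p(\R, W^{s,q}_r)}$ depending only on $\varphi$ (through $c_\varphi$ and $n$), which is the assertion.

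It remains to justify boundedness of multiplication by $\sigma_n^{2/q}$ from $W^{s,q}_r$ to $W^{s,q}_\varphi$ for $s\in[-1,1]$, and here I would repeat the argument of Lemma \ref{lem:normHs} with the exponent $q$ in place of $2$. At the $L^q$ level it is an isometry, because $\sigma_n^{2}\varphi^{n-1} = r^{n-1}$ gives $\|\sigma_n^{2/q}g\|_{L^q_\varphi}^q = \int|g|^q r^{n-1}\,dr = \|g\|_{L^q_r}^q$. At the $W^{1,q}$ level one uses $\partial_r(\sigma_n^{2/q}g) = \sigma_n^{2/q}\big(\partial_r g + \tfrac{n-1}{q}\tfrac{\sigma'}{\sigma}g\big)$ together with the boundedness of $\sigma'/\sigma$ from Lemma \ref{lem:smoothnessofsigma}, obtaining a two-sided bound with constants controlled by $1 + \tfrac{n-1}{2}c_\varphi$; the case $s\in(0,1)$ follows by interpolation and the case $s\in[-1,0)$ by duality, the adjoint of $\sigma_n^{2/q}\colon L^q_r\to L^q_\varphi$ for the weighted pairings being multiplication by $\sigma_n^{-2/q'}\colon L^{q'}_\varphi\to L^{q'}_r$, which is of the same type with the roles of $r$ and $\varphi$ (and $\pm\sigma'/\sigma$) interchanged.

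The step I expect to require the most care is not any single estimate but the bookkeeping: making the weight exponent in $\sigma_n^{1-2/q}L^p(\R, W^{s,q}_\varphi)$ match the conjugation $\sigma_n(\cdot)\sigma_n^{-1}$, and verifying the weighted multiplication result over the full range $s\in[-1,1]$ (in particular the negative-order case through the duality above). Pinning down the operator identity $e^{-ith_\mu} = \sigma_n e^{-ith_{\mu,n}}\sigma_n^{-1}$ rigorously is the only other point needing a word, and it is handled either via self-adjointness together with unitarity of $\sigma_n$, or directly via uniqueness for \eqref{systf}.
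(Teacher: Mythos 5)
Your proposal is correct and follows essentially the same route as the paper: the conjugation identity $e^{-ith_\mu}=\sigma_n e^{-ith_{\mu,n}}\sigma_n^{-1}$, the $H^{1/2}$ equivalence from Lemma \ref{lem:normHs}, and the weighted multiplication bound, which you re-derive exactly as in the paper's Lemma \ref{lemsigma} ($L^q$ isometry, Leibniz rule with boundedness of $\sigma'/\sigma$, then interpolation and duality). No gaps.
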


This is a consequence of the fact that 
$$
e^{-it h_{\mu}} = \sigma_{n} e^{-it  h_{\mu,n}} \sigma_{n}^{-1},
$$
of Lemma \ref{lem:normHs} and of the following lemma.

\begin{lemma}\label{lemsigma}
 The multiplication by $\sigma_{n}$ is a continuous operator from 
$$
 L^p(\R, W^{s,q}_{r})
$$ 
to 
$$
\sigma_{n}^{1-2/q}L^p(\R,W^{s,q}_{\varphi})
$$
for any $s\in[-1,1]$ and any $q\in (1,\infty)$.
\end{lemma}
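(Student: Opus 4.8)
The plan is to reduce everything to the endpoint cases $s = 0$ and $s = \pm 1$, and then interpolate. For $s = 0$ the claim is that $f \mapsto \sigma_n f$ sends $L^p(\R, L^q_r)$ into $\sigma_n^{1-2/q} L^p(\R, L^q_\varphi)$, which is immediate from the definitions of the weighted $L^q$ norms: writing $g = \sigma_n f$ we have $g \in \sigma_n^{1-2/q} L^q_\varphi$ iff $\sigma_n^{2/q-1} g = \sigma_n^{2/q} f \in L^q_\varphi$, and by the change-of-measure identity $\|\sigma_n^{2/q} f\|_{L^q_\varphi}^q = \int \sigma_n^2 |f|^q \varphi^{n-1}\,dr = \int |f|^q r^{n-1}\,dr = \|f\|_{L^q_r}^q$, exactly as in the proof of Lemma \ref{lem:normHs}. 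Since the $t$-variable plays no role, the same identity holds after taking $L^p_t$ norms, giving an isometry in the $s=0$ case.

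Next I would treat $s = 1$. Here I want to show $f \mapsto \sigma_n f$ maps $L^p(\R, W^{1,q}_r)$ into $\sigma_n^{1-2/q} L^p(\R, W^{1,q}_\varphi)$. By the $s=0$ step it suffices to control the derivative: using $\partial_r(\sigma_n f) = \sigma_n \partial_r f + \sigma_n' f = \sigma_n(\partial_r f + \tfrac{n-1}{2}\tfrac{\sigma'}{\sigma} f)$ (the computation $\sigma_n'\sigma_n^{-1} = \tfrac{n-1}{2}\tfrac{\sigma'}{\sigma}$ is the one already recorded in Lemma \ref{lem:normHs}), and since $\tfrac{\sigma'}{\sigma}$ is bounded on $(0,\infty)$ by Lemma \ref{lem:smoothnessofsigma}, the extra term $\sigma_n \tfrac{n-1}{2}\tfrac{\sigma'}{\sigma} f$ is handled by the $s=0$ bound with constant $\tfrac{n-1}{2} c_\varphi$. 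The weighted Sobolev norm $W^{1,q}_\varphi$ is, by the remark in Subsection \ref{subsec:notations}, equivalent to $\|\partial_r(\cdot)\|_{L^q(\Sigma)} + \|\cdot\|_{L^q(\Sigma)}$, so combining the two pieces gives the $s=1$ estimate. The case $s=-1$ then follows by duality: the $L^q_r$--$L^{q'}_r$ pairing with weight $r^{n-1}$ is transformed under multiplication by $\sigma_n$ into the $L^q_\varphi$--$L^{q'}_\varphi$ pairing with weight $\varphi^{n-1}$ precisely because $\sigma_n^2 \varphi^{n-1} = r^{n-1}$, so the adjoint of multiplication by $\sigma_n$ (suitably weighted) is again multiplication by a power of $\sigma_n$, and the $s=1$ bound dualizes to an $s=-1$ bound. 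Finally, complex interpolation of the analytic family between $s=-1$ and $s=1$, for fixed $q \in (1,\infty)$, yields the full range $s \in [-1,1]$; the hypothesis $q \in (1,\infty)$ is exactly what is needed for the weighted Sobolev spaces $W^{s,q}_\varphi$ to be defined by duality and interpolation and for the Littlewood--Paley/interpolation machinery to apply.

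The main obstacle is bookkeeping with the twisting power $\sigma_n^{1-2/q}$: one must check that multiplication by $\sigma_n$ really does land in $\sigma_n^{1-2/q} W^{s,q}_\varphi$ and not merely in $W^{s,q}_\varphi$, and that the derivative identity is compatible with this twist. Concretely, the point is that conjugation $g \mapsto \sigma_n^{2/q-1} g$ intertwines the $\sigma_n^{1-2/q} W^{s,q}_\varphi$ norm with the $W^{s,q}_\varphi$ norm up to commutator terms involving $\tfrac{\sigma'}{\sigma}$, which are again bounded; one should verify this commutator is a bounded operator on $W^{s,q}_\varphi$ for $s \in [0,1]$ (it costs one derivative of $\sigma_n^{2/q-1}$, i.e. a factor of $\tfrac{\sigma'}{\sigma} \in L^\infty$, plus lower-order terms controlled by $\tfrac{\varphi''}{\varphi}$-type quantities, all of which are bounded outside a neighborhood of $0$ and continuous up to $0$ by Lemma \ref{lem:smoothnessofsigma}). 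Everything else is the routine change of variables $\sigma_n^2 \varphi^{n-1} = r^{n-1}$ combined with the boundedness of $\sigma'/\sigma$ already established, and the $t$-variable is a passive spectator throughout.
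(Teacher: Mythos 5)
Your proposal is correct and follows essentially the same route as the paper: reduce to showing multiplication by $\sigma_n^{2/q}$ maps $W^{s,q}_r$ to $W^{s,q}_\varphi$, prove the $L^q$ isometry and the $W^{1,q}$ bound via the Leibniz rule and the boundedness of $\sigma'/\sigma$, then handle negative $s$ by duality and the rest by interpolation. The one point to make explicit is that the duality step requires the reverse-direction estimate (multiplication by $\sigma_n^{-2/q'}$ bounded from $W^{1,q'}_\varphi$ to $W^{1,q'}_r$), which is not literally the already-proved forward bound but follows from the identical two-line Leibniz computation, and which the paper records as a separate item.
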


\begin{proof} First of all, the norm in the $t$ variable is not relevant in the proof, hence we only prove that the multiplication by $\sigma_n$ is continuous from $W^{s,q}_r$ to $\sigma_n^{1-2/q} W_\varphi^{s,q}$ for $q\in (1,\infty)$ and $s\in [-1,1]$. This is equivalent to proving that the multiplication by $\sigma_n^{2/q}$ is continuous from $W_r^{s,q}$ to $W_\varphi^{s,q}$. 

For non-negative $s$, by interpolation, we can reduce the proof to the cases, $s=0,1$. 

For negative $s$, by duality, the continuity of the multiplication by $\sigma_n^{2/q}$ from $W_r^{s,q}$ to $W_\varphi^{s,q}$ is implied by the continuity of the multiplication by $\sigma_n^{-2/q'}$ from $W_\varphi^{-s,q'}$ to $W_r^{-s,q'}$ where $q'$ is the conjugated exponent of $q$. 

Therefore, it sufficient to prove the following, for all $q\in (1,\infty)$ : \begin{enumerate}
\item the multiplication by $\sigma_n^{2/q}$ is an isometry from $L^q_r$ to $L^q_\varphi$,
\item the mutiplication by $\sigma_n^{2/q}$ is continuous from $W_r^{1,q}$ to $W_\varphi^{1,q}$,
\item the multiplication by $\sigma_n^{-2/q}$ is continuous from $W_\varphi^{1,q}$ to $W_r^{1,q}$.
\end{enumerate}

(1) Let $f\in L^q_r$, we have by definition
\[
\|\sigma_n^{2/q}f\|_{L^q_\varphi}^q = \int_0^\infty \sigma_n^2 \varphi^{n-1}|f|^q 
\]
and using the definition of $\sigma_n$,
\[
\|\sigma_n^{2/q}f\|_{L^q_\varphi}^q = \int_0^\infty r^{n-1}|f(r)|^q dr = \|f\|_{L^q_r}^q. 
\]

(2) From (1), it is sufficient to prove that for all $f\in W_r^{1,q}$, we have 
\[
\|\partial_r (\sigma_n^{2/q} f)\|_{L^q_\varphi} \lesssim \|f\|_{W_r^{1,q}}.
\]
By the Leibniz rule, we have 
\[
\|\partial_r (\sigma_n^{2/q} f)\|_{L^q_\varphi} = \|\sigma_n^{2/q}\Big(\frac{n-1}{q}\frac{\sigma'}{\sigma} f + \partial_r f \Big) \|_{L^q_\varphi}
\]
and from (1), we get
\[
\|\partial_r (\sigma_n^{2/q} f)\|_{L^q_\varphi} = \|\frac{n-1}{q}\frac{\sigma'}{\sigma} f + \partial_r f \|_{L^q_r}.
\]
We conclude by using the fact that $\frac{\sigma'}{\sigma}$ is bounded.

(3) From (1), it is sufficient to prove that
\[
\|\partial_r (\sigma_n^{-2/q} f)\|_{L^q_r} \lesssim \|f\|_{W_\varphi^{1,q}}.
\]
By the Leibniz rule, we have 
\[
\|\partial_r (\sigma_n^{-2/q} f)\|_{L^q_r} = \|\sigma_n^{-2/q}\Big(-\frac{n-1}{q}\frac{\sigma'}{\sigma} f + \partial_r f \Big) \|_{L^q_r}
\]
and from (1), we get
\[
\|\partial_r (\sigma_n^{-2/q} f)\|_{L^q_r} = \|-\frac{n-1}{q}\frac{\sigma'}{\sigma} f + \partial_r f \|_{L^q_\varphi}.
\]
We conclude by using the fact that $\frac{\sigma'}{\sigma}$ is bounded. 
\end{proof}

As recalled in the introduction, the (massless) Dirac operator has been constructed as some square root of the Laplacian; in other words, every solution to the free Dirac equation on $\mathbb{R}^n$ satisfies a system of decoupled free wave/Klein-Gordon equations. This point of view can be carried at the ``radial" level:

\begin{lemma}\label{lem:square} Let $V \in  C^2(0,\infty)$ and let
\[
h_{V,n} = \begin{pmatrix} m & -\Big( \partial_r + \frac{n-1}{2r} \Big) + V \\
\Big( \partial_r + \frac{n-1}{2r} \Big) + V & -m \end{pmatrix}.
\]
Then we have 
\[
h_{V,n}^2 = \begin{pmatrix} m^2 + H_{c_-} & 0 \\ 0 & m^2 + H_{c_+}\end{pmatrix}
\]
with
\begin{equation}\label{defH}
H_{c_\pm} = -\Big( \partial_r^2 + \frac{n-1}{r}\partial_r\Big) +c_{\pm}
\end{equation}
and 
\[
c_{\pm} = -\frac{(n-1)(n-3)}{4r^2} + V^2 \pm \partial_r V.
\]
\end{lemma}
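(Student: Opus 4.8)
The plan is to prove the identity by direct matrix multiplication, carefully tracking the non-commuting operator entries. Write $A = \partial_r + \frac{n-1}{2r}$ so that the lower-left entry of $h_{V,n}$ is $A + V$ and the upper-right entry is $-A + V$. Then
\[
h_{V,n}^2 = \begin{pmatrix} m & -A+V \\ A+V & -m \end{pmatrix}\begin{pmatrix} m & -A+V \\ A+V & -m \end{pmatrix},
\]
and one computes the four entries as operators. The off-diagonal entries are $m(-A+V) + (-A+V)(-m) = 0$ and $(A+V)m - m(A+V) = 0$ since $m$ is a scalar; so the product is diagonal, as claimed. The $(1,1)$ entry is $m^2 + (-A+V)(A+V)$ and the $(2,2)$ entry is $(A+V)(-A+V) + m^2$.

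The crux is then to expand $(-A+V)(A+V)$ and $(A+V)(-A+V)$ as second-order differential operators. Here I would be careful that $V$ stands for the multiplication operator by the function $V(r)$, and that $A$ and $V$ do not commute: the commutator is $[A,V] = [\partial_r, V] = V'$ (the $\frac{n-1}{2r}$ piece is multiplication, hence commutes with $V$). Expanding,
\[
(-A+V)(A+V) = -A^2 - AV + VA + V^2 = -A^2 - [A,V] + V^2 = -A^2 - V' + V^2,
\]
\[
(A+V)(-A+V) = -A^2 + AV - VA + V^2 = -A^2 + [A,V] + V^2 = -A^2 + V' + V^2.
\]
So the two diagonal entries are $m^2 - A^2 + V^2 \mp V'$ — note the sign pattern matches $c_\pm = \cdots + V^2 \pm \partial_r V$ once we identify $-A^2$ with the radial Laplacian plus the curvature term.

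The remaining computation is to evaluate $A^2 = \bigl(\partial_r + \frac{n-1}{2r}\bigr)^2$ acting on radial functions. Expanding:
\[
A^2 = \partial_r^2 + \frac{n-1}{2r}\partial_r + \partial_r\!\Bigl(\tfrac{n-1}{2r}\,\cdot\Bigr) + \frac{(n-1)^2}{4r^2},
\]
and using $\partial_r\bigl(\frac{n-1}{2r} f\bigr) = \frac{n-1}{2r}\partial_r f - \frac{n-1}{2r^2} f$, this gives
\[
A^2 = \partial_r^2 + \frac{n-1}{r}\partial_r - \frac{n-1}{2r^2} + \frac{(n-1)^2}{4r^2} = \partial_r^2 + \frac{n-1}{r}\partial_r + \frac{(n-1)(n-3)}{4r^2},
\]
since $\frac{(n-1)^2}{4} - \frac{n-1}{2} = \frac{(n-1)(n-3)}{4}$. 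Therefore $-A^2 = -\bigl(\partial_r^2 + \frac{n-1}{r}\partial_r\bigr) - \frac{(n-1)(n-3)}{4r^2}$, and substituting into the diagonal entries yields exactly $m^2 + H_{c_\pm}$ with $c_\pm = -\frac{(n-1)(n-3)}{4r^2} + V^2 \pm \partial_r V$, as stated. The only real obstacle is bookkeeping — keeping the sign of the commutator term straight and not dropping the first-order cross term when squaring $A$ — there is no analytic difficulty since $V \in C^2(0,\infty)$ makes all manipulations licit on test functions.
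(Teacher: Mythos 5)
Your computation is correct and is exactly the "straightforward computation" the paper leaves to the reader: the off-diagonal entries cancel because $m$ is scalar, the commutator $[A,V]=V'$ produces the $\pm\partial_r V$ terms with the right sign pattern, and the expansion of $A^2$ yields the $\frac{(n-1)(n-3)}{4r^2}$ term. Nothing is missing.
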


\begin{remark}
In other words, if $v= \begin{pmatrix} v_+ \\ v_- \end{pmatrix}$ solves the  equation 
$$
i\partial_t v = \tilde h_{\mu} v
$$
with initial datum $v_0 = \begin{pmatrix}
v_{0,+} \\ v_{0,-}
\end{pmatrix}$, then $v_+$ and $v_-$ solve respectively
$$
\partial_t^2 v_+ = -m^2v_+ -  H_{c_-}v_+ \quad \textrm{ and }\quad \partial_t^2 v_- = -m^2v_- -H_{c_+} v_-
$$ 
with initial data
$$
\begin{pmatrix} v_{+}(t=0) \\ \partial_t v_+ (t=0) \end{pmatrix} = \begin{pmatrix}
v_{0,+} \\ -i mv_{0,+} + i \Big( \partial_r + \frac1{r} - V \Big) v_{0,-} 
\end{pmatrix} 
$$
and
$$ \quad \begin{pmatrix}
v_-(t=0) \\ \partial_t v_-(t=0) 
\end{pmatrix} = \begin{pmatrix}
v_{0,-} \\ im v_{0,-} -i \Big( \partial_r + \frac1{r} +V \Big) v_{0,+}
\end{pmatrix}.
$$
\end{remark}

\begin{proof} Straightforward computation. \end{proof}

 \section{The wave and Klein-Gordon equation \\ with potentials of critical decay}\label{secstr}

 In this section we review the well-known theory on dispersive estimates for critical perturbations of the wave and Klein-Gordon flows, discussing in particular how the available results can be adapted to deal with our problem. As the strategy and the results below are  classical, we will only sketch most of them, providing  references to fill in the details.

\subsection{General Kato-smoothing}
We start with the following 

\begin{proposition}\label{prop:Hcsupersmooth} Let $n\geq 3$ be the dimension, and let $c \in C^1((0,\infty))$ and $r=|x|$. Assume that
\begin{equation}\label{def_delta_c} 
\delta_c : = \min \Big[ \frac14, \inf \Big( cr^2 + \frac{(n-2)^2}{4}\Big), \inf \Big( -r^3c' -r^2c + \frac{(n-2)^2}{4}\Big)\Big] >0 .
\end{equation}
Then the operator $H_c$ defined in \eqref{defH} is positive on the Hilbert space $L^2(\R^n)$ and the operator $|x|^{-1}$ (from $L^2(\R^n)$ to $L^2(\R^n)$) is $H_c$ super-smooth with constant $\delta_c^{-1}$. 
\end{proposition}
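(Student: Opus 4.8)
The plan is to deduce super-smoothness from a uniform weighted resolvent estimate via Kato's theory of $H$-smooth operators, and to prove that estimate by the method of multipliers (a Rellich--Pohozaev/Morawetz identity) after conjugating $H_c$ to a half-line Schr\"odinger operator. Concretely, it is enough to show
\[
\sup_{z\in\C\setminus\R}\big\|\,|x|^{-1}(H_c-z)^{-1}|x|^{-1}\big\|_{L^2\to L^2}\ \le\ C\,\delta_c^{-1},
\]
equivalently, setting $u=(H_c-z)^{-1}f$, the a priori bound $\|\,|x|^{-1}u\|_{L^2}\le C\,\delta_c^{-1}\|\,|x|f\|_{L^2}$ uniform in $z$, careful bookkeeping yielding the stated constant. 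Since $H_c$ involves only the radial variable, it commutes with rotations and acts as $H_c^{\mathrm{rad}}\otimes I$ on $L^2(\R^n)=L^2((0,\infty),r^{n-1}dr)\otimes L^2(\mathbb S^{n-1})$, so it suffices to argue with radial functions. Positivity is then immediate: for radial $u$ one integrates by parts to get $\langle H_cu,u\rangle=\int_0^\infty(|u'|^2+c|u|^2)\,r^{n-1}dr$, and Hardy's inequality together with the first bound defining $\delta_c$ in \eqref{def_delta_c}, namely $cr^2+\tfrac{(n-2)^2}{4}\ge\delta_c$, yields $\langle H_cu,u\rangle\ge\delta_c\int|x|^{-2}|u|^2\ge0$; in particular $0$ lies in the resolvent set and the resolvent is well defined.

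Next I would reduce to the half-line. The unitary $u\mapsto r^{(n-1)/2}u=:w$ from $L^2((0,\infty),r^{n-1}dr)$ onto $L^2((0,\infty),dr)$ conjugates $H_c^{\mathrm{rad}}$ to $\tilde H_c=-\partial_r^2+\tilde c$ with $\tilde c=\tfrac{(n-1)(n-3)}{4r^2}+c$ — which for $c=c_\pm$ as in Lemma~\ref{lem:square} is the supersymmetric potential $\tilde c=V^2\pm V'$ — and leaves the weight $|x|^{-1}=r^{-1}$ unchanged. A direct computation shows that the quantity $\delta_c$ in \eqref{def_delta_c} equals $\min\big[\tfrac14,\ \inf_{r>0}(r^2\tilde c+\tfrac14),\ \inf_{r>0}(-r^3\tilde c'-r^2\tilde c+\tfrac14)\big]$, so the hypothesis $\delta_c>0$ says precisely that $\tilde c\ge(\delta_c-\tfrac14)r^{-2}$ (subcriticality with respect to the half-line Hardy inequality $\int|w'|^2\ge\tfrac14\int r^{-2}|w|^2$) and $-r^3\tilde c'-r^2\tilde c\ge\delta_c-\tfrac14$ (a repulsivity/monotonicity bound). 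One works throughout with the Friedrichs self-adjoint realization of $\tilde H_c$, the relevant one in the regime of Remark~\ref{rem:selfadjointness}, i.e.\ when the negative-order Bessel solutions are excluded (the standing hypothesis $|\mu|>\tfrac12$).

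Then I would turn to the multiplier step. Fix $z\in\C\setminus\R$, write $(\tilde H_c-z)w=h$ (so $\|rh\|_{L^2(dr)}=\|\,|x|f\|_{L^2}$ and $\|r^{-1}w\|_{L^2(dr)}=\|\,|x|^{-1}u\|_{L^2}$), and pair the equation with $\overline{2rw'+w}$. Taking real parts and integrating by parts gives the Rellich--Pohozaev identity
\[
2\int_0^\infty|w'|^2\,dr-\int_0^\infty r\,\tilde c'\,|w|^2\,dr\;=\;\mathrm{Re}\!\int_0^\infty h\,\overline{2rw'+w}\,dr\;+\;\mathrm{Im}(z)\,\gamma\;+\;\mathrm B,
\]
with $\gamma=2\,\mathrm{Im}\!\int_0^\infty r\,\bar w\,w'\,dr$ and $\mathrm B$ the boundary contributions at $0$ and $\infty$. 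The two bounds on $\tilde c$ combine to $-r\,\tilde c'\ge 2(\delta_c-\tfrac14)r^{-2}$, so together with the half-line Hardy inequality the left-hand side is bounded below by $c_0\,\delta_c\big(\|w'\|^2+\|r^{-1}w\|^2\big)$ for an absolute $c_0>0$; on the right-hand side, $|\mathrm{Re}\!\int h\,\overline{2rw'}|\le2\|rh\|\,\|w'\|$ and $|\mathrm{Re}\!\int h\bar w|\le\|rh\|\,\|r^{-1}w\|$, so absorbing $\|w'\|^2$ and a small multiple of $\|r^{-1}w\|^2$ leaves — once the terms $\mathrm B$ and $\mathrm{Im}(z)\gamma$ are controlled — exactly $\|r^{-1}w\|\lesssim\delta_c^{-1}\|rh\|$. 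Undoing the conjugation gives the resolvent estimate, hence super-smoothness of $|x|^{-1}$ with constant $\sim\delta_c^{-1}$.

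The hard part is precisely those last two terms, which must be bounded uniformly in $z\in\C\setminus\R$. The vanishing of $\mathrm B$ at $r=0$ rests on the behavior at the origin of functions in the Friedrichs domain — this is where $|\mu|>\tfrac12$ (only the $L^2$-subcritical Bessel branch occurs, cf.\ Remark~\ref{rem:selfadjointness}) is essential — and at $r=\infty$ on the decay of resolvent elements, which holds for $z\notin\R$ but has to be made quantitative as $\mathrm{Im}\,z\to0$; the term $\mathrm{Im}(z)\gamma$ is handled by the energy identity obtained from pairing the equation with $\bar w$, which controls $|\mathrm{Im}\,z|\,\|w\|^2\le\|rh\|\,\|r^{-1}w\|$, together with a limiting-absorption-type argument. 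All of this is by now classical: the scheme is the method of multipliers as used, in closely related settings, in \cite{cacdescurv1,kg} and references therein, while the positivity statement and the half-line reduction of the first two paragraphs are the routine parts.
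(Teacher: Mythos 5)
Your proposal follows the same architecture as the paper's proof: positivity via the radial reduction, integration by parts, Hardy's inequality and the first constraint in \eqref{def_delta_c}; and super-smoothness via the Kato criterion $\sup_{z\notin\R}\langle R(z)|x|^{-1}v,|x|^{-1}v\rangle\lesssim\delta_c^{-1}\|v\|^2$, reduced to the uniform weighted resolvent bound $\||x|^{-1}u\|_{L^2}\leq\delta_c^{-1}\||x|f\|_{L^2}$. The one real difference is that the paper obtains that resolvent bound by citing \cite[Theorem 3.3]{danzha} (with $a=\tfrac{n-1}{r}$), whereas you open the black box and sketch the underlying multiplier proof. Your bookkeeping there is correct and worth recording: after the unitary conjugation $w=r^{(n-1)/2}u$ the three quantities in \eqref{def_delta_c} become exactly $\tfrac14$, $\inf(r^2\tilde c+\tfrac14)$ and $\inf(-r^3\tilde c'-r^2\tilde c+\tfrac14)$ with $\tilde c=\tfrac{(n-1)(n-3)}{4r^2}+c$, and the combination $-r\tilde c'\geq 2(\delta_c-\tfrac14)r^{-2}$ together with the half-line Hardy inequality is precisely what makes the multiplier $2rw'+w$ close with the constant $\delta_c^{-1}$. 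What you leave unfinished — the vanishing of the boundary terms and the uniform control of $\mathrm{Im}(z)\gamma$ as $\mathrm{Im}\,z\to0$ — is exactly the technical content of the cited theorem of D'Ancona--Zhang, so your proof is complete modulo the same external input the paper relies on; carrying those two steps out in full would make your argument strictly more self-contained than the paper's, but as written the two proofs are at the same level of rigor. (Minor points: the proposition is stated for general $c$ with $\delta_c>0$, so the discussion of $|\mu|>\tfrac12$ belongs to the application in Proposition \ref{prop:locsmoothdirac} rather than here, the relevant fact at the origin being sub-criticality $r^2\tilde c+\tfrac14\geq\delta_c>0$; and your $(n-2)^2/4$ in the Hardy step is the correct constant, the $(n-1)^2/4$ appearing in the paper's displayed inequality being a typo.)
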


\begin{remark} For definitions of smooth and super-smooth operators, we refer to   \cite[Definition 2.1]{dansmoot}. \end{remark}

\begin{proof} Because we have 
\[
\inf \Big( cr^2 + \frac{(n-2)^2}{4}\Big) \geq \delta_c
\]
we get, for any $v \in C^\infty((0,\infty))$ with compact support
\[
\an{v,H_c v}_{L^2} \geq \an{v,\Big(-\partial_r^2 - \frac{n-1}{r}\Big) v }_{L^2} - \an{v,\frac{(n-1)^2}{4r^2}v}_{L^2} + \delta_c \an{v,r^{-2}v}_{L^2}
\]
and by Hardy's inequality, as we are in dimension $n\geq3$,
\[
\an{v,H_c v}_{L^2} \geq  \delta_c \an{v,r^{-2}v}_{L^2}.
\]
Therefore, $H_c$ is positive.

The fact that $|x|^{-1}$ is $H_c$ super-smooth is a consequence of   \cite[Theorem 3.3]{danzha} with $a=\frac{n-1}{r}$. Indeed, for $v$ in the domain of $|x|^{-1}$, write $f = |x|^{-1}v$; writing $R(\lambda +i\varepsilon)$ the resolvant of $H_c$ with $\varepsilon\neq 0$, we have that $ R(\lambda+i\varepsilon) |x|^{-1}v$ is the solution to
\[
u'' + \frac{n-1}{r}u' + (\lambda +i\varepsilon) u -cu= -f
\]
from which we deduce  
\[
\| \, |x|^{-1}u\|_{L^2(\R^n)} \leq \delta_c^{-1} \|\,|x|f\|_{L^2(\R^n)} = \delta_c^{-1}\|v\|_{L^2(\R^n)}.
\]
We get from this
\[
\an{R(\lambda + i\varepsilon) |x|^{-1}v,|x|^{-1}v}_{L^2} = \an{|x|^{-1}u,v}_{L^2}\leq \| \, |x|^{-1}u\|_{L^2(\R^n)}\|v\|_{L^2(\R^n)}
\]
which yields
\[
\an{R(\lambda + i\varepsilon) |x|^{-1}v,|x|^{-1}v}_{L^2}  \leq \delta_c^{-1}\|v\|_{L^2(\R^n)}^2
\]
and concludes the proof.
\end{proof}

\begin{proposition}\label{prop:localsmoothing} Under the same assumptions as Proposition \ref{prop:Hcsupersmooth}, we have that $|x|^{-1} (H_c+\nu)^{1/4} $ is $\sqrt{H_c+\nu}$ super-smooth for any $\nu\in\mathbb{R}^+$ with constant $C_c^2 = (3+\pi)\delta_c^{-1}$ and in particular, for all $v$ in the domain of $(H_c + \nu)^{1/4}$ we have that
\[
\big \|\, |x|^{-1} e^{-it\sqrt{\nu + H_c}}v\big\|_{L^2(\R\times \R^n)} \leq C_c \|(H_c+\nu)^{1/4} v\|_{L^2(\R^n)}.
\]
\end{proposition}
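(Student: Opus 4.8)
The plan is to derive the stated Kato-smoothing bound for the half-wave-type propagator $e^{-it\sqrt{\nu+H_c}}$ from the resolvent bound of Proposition~\ref{prop:Hcsupersmooth} together with the standard ``transference'' principle relating smoothness for an operator $A$ to smoothness for $\sqrt{A}$. Concretely, I would first record that, by Proposition~\ref{prop:Hcsupersmooth}, the operator $A := |x|^{-1}$ is $H_c$ super-smooth with constant $\delta_c^{-1}$, i.e.
\[
\sup_{\lambda\in\R,\ \varepsilon\neq 0}\ \bigl|\an{ (H_c-\lambda-i\varepsilon)^{-1} A^* v, A^* v}_{L^2}\bigr| \leq \delta_c^{-1}\|v\|_{L^2}^2 .
\]
Here $A^*=A=|x|^{-1}$ since $|x|^{-1}$ is self-adjoint. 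This is exactly the characterization of super-smoothness used in \cite{dansmoot} (the paper's cited Definition~2.1).

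Next, the key algebraic point is the change of spectral variable $\lambda = \mu^2-\nu$ (equivalently $\mu=\sqrt{\lambda+\nu}$, $\mu>0$), which turns the resolvent of $\sqrt{\nu+H_c}$ into the resolvent of $H_c$. Writing $B := |x|^{-1}(H_c+\nu)^{1/4}$, I want to bound
\[
\sup_{\mu\in\R,\ \varepsilon\neq 0}\ \bigl|\an{(\sqrt{\nu+H_c}-\mu-i\varepsilon)^{-1} B^* v, B^* v}_{L^2}\bigr|.
\]
Using the spectral theorem for $H_c$ (which is positive and self-adjoint by Proposition~\ref{prop:Hcsupersmooth}), one writes $(\sqrt{\nu+H_c}-\mu-i\varepsilon)^{-1}$ via the functional calculus and compares it, on the spectral side, with $(H_c-(\mu^2-\nu)-i\varepsilon')^{-1}$; the extra factor $(H_c+\nu)^{1/4}$ on each side contributes $(\text{spec})^{1/4}\cdot(\text{spec})^{1/4} = (\lambda+\nu)^{1/2}=\mu$ near the relevant point, while the elementary inequality
\[
\Bigl|\frac{1}{\sqrt{t+\nu}-\mu-i\varepsilon}\Bigr|\cdot(t+\nu)^{1/2} \ \lesssim\ \frac{(t+\nu)^{1/2}+|\mu|}{|t+\nu-\mu^2-i\varepsilon|} \quad (t\geq 0)
\]
lets one absorb the $(H_c+\nu)^{1/4}$ factors at the cost of a numerical constant. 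Splitting the right-hand side into the two terms and estimating each by the super-smoothness of $|x|^{-1}$ with respect to $H_c$ (applied at the point $\mu^2-\nu$, and using that the spectrum of $H_c$ lies in $[0,\infty)$ so $t+\nu-\mu^2$ ranges over $[\nu-\mu^2,\infty)$) produces the constant $(3+\pi)\delta_c^{-1}$; the $3$ comes from the algebraic splitting and crude bounds, the $\pi$ from integrating $\frac{\varepsilon}{t^2+\varepsilon^2}$-type kernels against the spectral measure. This gives $B^*=(H_c+\nu)^{1/4}|x|^{-1}$ — equivalently $|x|^{-1}(H_c+\nu)^{1/4}$ by taking adjoints — super-smooth for $\sqrt{\nu+H_c}$ with constant $C_c^2=(3+\pi)\delta_c^{-1}$.

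Finally, the dispersive-type spacetime estimate is the standard Kato $TT^*$/Plancherel-in-time consequence of super-smoothness: if $P$ is $U(t)=e^{-itL}$-super-smooth with constant $C^2$ then $\|Pe^{-itL}g\|_{L^2_tL^2_x}\le C\|g\|_{L^2_x}$. Applying this with $L=\sqrt{\nu+H_c}$, $P=|x|^{-1}(H_c+\nu)^{1/4}$, and $g=(H_c+\nu)^{-1/4}v$ (so that $Pe^{-itL}g = |x|^{-1}e^{-it\sqrt{\nu+H_c}}v$) yields
\[
\bigl\|\,|x|^{-1} e^{-it\sqrt{\nu+H_c}}v\bigr\|_{L^2(\R\times\R^n)} \leq C_c \,\|(H_c+\nu)^{1/4}v\|_{L^2(\R^n)},
\]
which is the claim. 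I expect the main obstacle to be the second step — making the change of variables $\lambda\mapsto\mu^2-\nu$ rigorous at the level of resolvents (rather than heuristically on the spectral side), handling the region $\mu^2<\nu$ where $\sqrt{\nu+H_c}-\mu$ has no real spectrum, and tracking the constants carefully enough to land exactly on $(3+\pi)\delta_c^{-1}$; everything else is either quoted from \cite{danzha,dansmoot} or a routine functional-calculus manipulation. One should also note the uniformity in $\nu\ge 0$ is automatic since $\delta_c$ does not involve $\nu$, which is what makes this useful later with $\nu=m^2$.
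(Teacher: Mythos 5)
Your argument is essentially the paper's: the paper proves this proposition in one line by citing \cite[Theorem 2.4]{dansmoot}, which is exactly the transference statement (from $H_c$-supersmoothness of $|x|^{-1}$, established in Proposition \ref{prop:Hcsupersmooth}, to $\sqrt{H_c+\nu}$-supersmoothness of $|x|^{-1}(H_c+\nu)^{1/4}$ with constant $(3+\pi)\delta_c^{-1}$) whose proof you sketch. Your outline — resolvent change of variables $\lambda\mapsto\mu^2-\nu$ plus the standard Kato $TT^*$ step to pass to the spacetime bound — is the argument behind the cited theorem, so the only difference is that you reprove the reference rather than quote it.
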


\begin{proof} This is a direct consequence of  \cite[Theorem 2.4]{dansmoot}.
\end{proof}

\begin{proposition}\label{prop:locsmoothwave} Under the assumptions of Proposition \ref{prop:Hcsupersmooth} and assuming $\nu$ nonnegative, set $U_c(t)$ to be the flow of the equation
\begin{equation}\label{Cauchyprobwave}
\left \lbrace{\begin{array}{cc}
\partial_t^2 u + H_c u + \nu u = 0, & \\
u(t=0) = u_0, & \partial_t u(t=0) = u_1.
\end{array}} \right.
\end{equation}
Set also $\mathcal X_{c,\nu}$ and $\mathcal H^{1/2}_{c,\nu}$ to be spaces respectively induced by the norms
\[
\|(f,g)\|_{\mathcal X_{c,\nu}}^2 = \|\,|x|^{-1} f\|_{L^2(\R\times \R^n)}^2 + \|\,|x|^{-1}(H_c+\nu)^{-1/2}g\|_{L^2(\R\times \R^n)}^2
\]
and 
\[
\|(u_0,u_1)\|_{\mathcal H^{1/2}_{c,\nu}}^2 = \|(H_c +\nu)^{1/4}u_0\|_{L^2(\R^n)}^2 + \|(H_c+\nu)^{-1/4}u_1\|_{L^2(\R^n)}^2  .
\]
Then we have for all $(u_0,u_1) \in \mathcal H^{1/2}_{c,\nu}$,
\[
\|U_c(t)(u_0,u_1)\|_{\mathcal X_{c,\nu}} \leq C_c \|(u_0,u_1)\|_{\mathcal H^{1/2}_{c,\nu}}.
\]
\end{proposition}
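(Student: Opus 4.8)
The plan is to reduce Proposition \ref{prop:locsmoothwave} to the local smoothing estimates already established in Propositions \ref{prop:Hcsupersmooth} and \ref{prop:localsmoothing} by writing the wave flow $U_c(t)$ explicitly in terms of the functional calculus of $H_c+\nu$. First I would observe that since $H_c$ is positive (by Proposition \ref{prop:Hcsupersmooth}) and $\nu\geq0$, the operator $H_c+\nu$ is a nonnegative self-adjoint operator, so $\sqrt{H_c+\nu}$ is well-defined and the solution to \eqref{Cauchyprobwave} is given by the standard formula
\[
U_c(t)(u_0,u_1) = \Big(\cos(t\sqrt{H_c+\nu})u_0 + \frac{\sin(t\sqrt{H_c+\nu})}{\sqrt{H_c+\nu}}u_1,\ \text{its time derivative}\Big).
\]
Writing $\cos$ and $\sin$ in terms of $e^{\pm it\sqrt{H_c+\nu}}$, the first component of $U_c(t)(u_0,u_1)$ is a linear combination of $e^{\pm it\sqrt{H_c+\nu}}u_0$ and $e^{\pm it\sqrt{H_c+\nu}}(H_c+\nu)^{-1/2}u_1$.

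The key step is then to apply Proposition \ref{prop:localsmoothing} to each of these four pieces. For the $u_0$ terms, Proposition \ref{prop:localsmoothing} gives directly
\[
\big\| |x|^{-1} e^{-it\sqrt{\nu+H_c}}u_0\big\|_{L^2(\R\times\R^n)} \leq C_c \|(H_c+\nu)^{1/4}u_0\|_{L^2(\R^n)},
\]
which is bounded by $C_c\|(u_0,u_1)\|_{\mathcal H^{1/2}_{c,\nu}}$; the same works for the $e^{+it\sqrt{\nu+H_c}}u_0$ piece since $|x|^{-1}$ is also super-smooth for the backward flow (the super-smoothness estimate of Proposition \ref{prop:localsmoothing} is symmetric in $t$, or one simply applies it with $t\mapsto -t$). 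For the $u_1$ terms one applies the same estimate with $v = (H_c+\nu)^{-1/2}u_1$, obtaining
\[
\big\||x|^{-1} e^{-it\sqrt{\nu+H_c}}(H_c+\nu)^{-1/2}u_1\big\|_{L^2(\R\times\R^n)} \leq C_c\|(H_c+\nu)^{1/4}(H_c+\nu)^{-1/2}u_1\|_{L^2(\R^n)} = C_c\|(H_c+\nu)^{-1/4}u_1\|_{L^2(\R^n)},
\]
which is again controlled by $\|(u_0,u_1)\|_{\mathcal H^{1/2}_{c,\nu}}$. Summing these four contributions (with the numerical factors $\tfrac12$ coming from $\cos,\sin$) controls $\||x|^{-1}f\|_{L^2(\R\times\R^n)}$ where $f$ is the first component of $U_c(t)(u_0,u_1)$, giving the first half of the $\mathcal X_{c,\nu}$ norm.

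For the second half, $\||x|^{-1}(H_c+\nu)^{-1/2}g\|_{L^2(\R\times\R^n)}$ where $g = \partial_t f$ is the second component, I would note that $g = -\sin(t\sqrt{H_c+\nu})\sqrt{H_c+\nu}\,u_0 + \cos(t\sqrt{H_c+\nu})u_1$, so that $(H_c+\nu)^{-1/2}g$ is a linear combination of $e^{\pm it\sqrt{H_c+\nu}}u_0$ and $e^{\pm it\sqrt{H_c+\nu}}(H_c+\nu)^{-1/2}u_1$ — exactly the same four objects as before. Hence the identical application of Proposition \ref{prop:localsmoothing} bounds this term as well, and adding the two halves yields $\|U_c(t)(u_0,u_1)\|_{\mathcal X_{c,\nu}}\leq C_c\|(u_0,u_1)\|_{\mathcal H^{1/2}_{c,\nu}}$ (after absorbing the absolute constants from the trigonometric identities into $C_c$, or stating the bound up to a universal multiplicative constant). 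The only genuine subtlety — and the step I would be most careful about — is the domain/density issue: one should first prove the estimate for $(u_0,u_1)$ in a dense subspace (e.g. spectral subspaces of $H_c+\nu$ where all the fractional powers act boundedly and the formula for $U_c(t)$ is unambiguous), and then extend by density using the completeness of $\mathcal H^{1/2}_{c,\nu}$ and $\mathcal X_{c,\nu}$; this also legitimizes commuting $(H_c+\nu)^{-1/2}$ past the propagator via the spectral theorem. Everything else is a routine bookkeeping of the trigonometric formulas.
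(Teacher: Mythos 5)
Your argument is correct and is essentially the paper's proof: the paper also reduces \eqref{Cauchyprobwave} to the half-wave propagator $e^{\pm it\sqrt{H_c+\nu}}$ and invokes Proposition \ref{prop:localsmoothing}, the only difference being that it packages the two components into the single complex variable $U = u + i(H_c+\nu)^{-1/2}\partial_t u$ (for real $u$), so that $\||x|^{-1}U\|_{L^2}$ and $\|(H_c+\nu)^{1/4}U(0)\|_{L^2}$ reproduce the $\mathcal X_{c,\nu}$ and $\mathcal H^{1/2}_{c,\nu}$ norms exactly and no absolute constant is lost. Your cosine/sine decomposition into four pieces is an equivalent bookkeeping of the same idea, with the harmless multiplicative constant you already flagged.
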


\begin{proof} The proof follows the usual lines assuming, without loss of generality, that $u$ is real and use the transform
\[
U = u+ i(H_c+\nu)^{-1/2} \partial_t u.
\]
\end{proof}

\subsection{Application to the Dirac equation with critical potentials}

\begin{proposition}\label{prop:locsmoothdirac} Let $V \in C^2((0,\infty))$. Write
 $c_\pm = - \frac{(n-1)(n-3)}{4r^2} + V^2 \pm V'$. Set $S_{V,n}(t)$ to be the flow of equation $i\partial_t u= h_{V,n}u$ with $h_{V,n}$ as in Lemma \ref{lem:square} . Assume that
\begin{equation}\label{defdeltaV}
\delta_V^\pm : = \min \Big[ \frac14, \inf \Big( \frac14 + r^2(V^2 \pm V') \Big),\inf \Big( \frac14  - r^3 (2VV' \pm V'') - r^2(V^2\pm V') \Big) \Big] >0.
\end{equation}

Then we have that for all $u_0 \in \mathcal H^{1/2}_{c_+,c_-,m}$ the solution $S_{V,n}(t) u_0$ satisfies \[
\||x|^{-1} S_{V,n}(t) u_0 \|_{L^2(\R \times \R^n)} \leq 3(C_{c_+} + C_{c_-}) \|u_0\|_{\mathcal H^{1/2}_{c_+,c_-,m}},
\]
where $\mathcal H^{1/2}_{c_+,c_-,m}$ is the space induced by the norm 
\[
\left\Vert \begin{pmatrix}
f\\g 
\end{pmatrix}\right\Vert_{\mathcal H^{1/2}_{c_+,c_-,m}} = \|(m^2 + H_{c_-}  )^{1/4} f\|_{L^2(\R^n)} + \|(m^2 + H_{c_+})^{1/4}g\|_{L^2(\R^n)}.
\]
Finally, we have $\delta_{c_\pm} = \delta_V^{\pm}$.
\end{proposition}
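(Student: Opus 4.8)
The plan is to deduce the estimate from the Klein--Gordon local smoothing of Proposition~\ref{prop:locsmoothwave}, applied separately to the two components of $S_{V,n}(t)u_0$; the first-order operator appearing in the corresponding initial velocities will be absorbed using the ``supersymmetric'' identity $h_{V,n}^2=\mathrm{diag}(m^2+H_{c_-},m^2+H_{c_+})$ of Lemma~\ref{lem:square}.

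First one checks the last claim $\delta_{c_\pm}=\delta_V^\pm$: with $c_\pm=-\tfrac{(n-1)(n-3)}{4r^2}+V^2\pm V'$ and the elementary identity $(n-2)^2-(n-1)(n-3)=1$, a direct computation gives $c_\pm r^2+\tfrac{(n-2)^2}{4}=\tfrac14+r^2(V^2\pm V')$ and $-r^3c_\pm'-r^2c_\pm+\tfrac{(n-2)^2}{4}=\tfrac14-r^3(2VV'\pm V'')-r^2(V^2\pm V')$, so the three quantities defining $\delta_{c_\pm}$ in \eqref{def_delta_c} coincide with those defining $\delta_V^\pm$ in \eqref{defdeltaV}. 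In particular $\delta_V^\pm>0$ places us under the hypotheses of Propositions~\ref{prop:Hcsupersmooth}--\ref{prop:locsmoothwave} with $c=c_\pm$, $\nu=m^2\ge0$, so $H_{c_\pm}\ge0$ are self-adjoint. Write $h_{V,n}=\left(\begin{smallmatrix} m & A^* \\ A & -m \end{smallmatrix}\right)$ with $A:=\partial_r+\tfrac{n-1}{2r}+V$ and $A^*$ its formal $L^2_r$-adjoint; then Lemma~\ref{lem:square} reads $A^*A=H_{c_-}$, $AA^*=H_{c_+}$, whence $h_{V,n}^2=\mathrm{diag}(m^2+H_{c_-},m^2+H_{c_+})$. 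Together with $H_{c_\pm}\ge0$ this makes $h_{V,n}$ self-adjoint and $S_{V,n}(t)=e^{-ith_{V,n}}$ a unitary group on $L^2(\R^n;\C^2)$ — a point taken for granted, as in the statement.

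Now set $u(t)=S_{V,n}(t)u_0=(u_+,u_-)^T$. Applying $i\partial_t$ to $i\partial_t u=h_{V,n}u$ and invoking Lemma~\ref{lem:square}, $-\partial_t^2u=h_{V,n}^2u=\mathrm{diag}(m^2+H_{c_-},m^2+H_{c_+})u$; so $u_+$ solves $\partial_t^2u_++(m^2+H_{c_-})u_+=0$ with data $(u_{0,+},\,-i(mu_{0,+}+A^*u_{0,-}))$ and $u_-$ solves $\partial_t^2u_-+(m^2+H_{c_+})u_-=0$ with data $(u_{0,-},\,-i(Au_{0,+}-mu_{0,-}))$. By uniqueness of \eqref{Cauchyprobwave}, $u_\pm$ is the flow $U_{c_\mp}(t)$ of Proposition~\ref{prop:locsmoothwave} applied to these data (with $\nu=m^2$), so, discarding the velocity term of the $\mathcal X$-norm,
\[
\||x|^{-1}u_+\|_{L^2(\R\times\R^n)}\le C_{c_-}\big\|\big(u_{0,+},\,-i(mu_{0,+}+A^*u_{0,-})\big)\big\|_{\mathcal H^{1/2}_{c_-,m^2}},
\]
and symmetrically for $u_-$ with $C_{c_+}$, $H_{c_+}$ and velocity $-i(Au_{0,+}-mu_{0,-})$. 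It then remains to bound the right-hand side by $\sqrt2\,C_{c_-}\|u_0\|_{\mathcal H^{1/2}_{c_+,c_-,m}}$ (and symmetrically). The position slot is $\|(m^2+H_{c_-})^{1/4}u_{0,+}\|\le\|u_0\|_{\mathcal H^{1/2}_{c_+,c_-,m}}$. In the velocity slot, $m\|(m^2+H_{c_-})^{-1/4}u_{0,+}\|\le\|(m^2+H_{c_-})^{1/4}u_{0,+}\|$ since $m^2\le m^2+H_{c_-}$; and $\|(m^2+H_{c_-})^{-1/4}A^*u_{0,-}\|=\|A^*(m^2+H_{c_+})^{-1/4}u_{0,-}\|\le\|(m^2+H_{c_+})^{1/4}u_{0,-}\|$, using the intertwining $(m^2+H_{c_-})^{-1/4}A^*=A^*(m^2+H_{c_+})^{-1/4}$ (valid because $A^*A=H_{c_-}$, $AA^*=H_{c_+}$, whence $g(A^*A)A^*=A^*g(AA^*)$ for bounded Borel $g$) and $\|A^*w\|^2=\langle H_{c_+}w,w\rangle\le\|(m^2+H_{c_+})^{1/2}w\|^2$. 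Hence the velocity slot is $\le\|(m^2+H_{c_-})^{1/4}u_{0,+}\|+\|(m^2+H_{c_+})^{1/4}u_{0,-}\|=\|u_0\|_{\mathcal H^{1/2}_{c_+,c_-,m}}$, so the full $\mathcal H^{1/2}_{c_-,m^2}$-norm is $\le\sqrt2\,\|u_0\|_{\mathcal H^{1/2}_{c_+,c_-,m}}$; the $u_-$ bound is identical with $A$, $c_-$, $c_+$ interchanged. Summing the squares, $\||x|^{-1}S_{V,n}(t)u_0\|_{L^2(\R\times\R^n)}^2\le2(C_{c_-}^2+C_{c_+}^2)\|u_0\|_{\mathcal H^{1/2}_{c_+,c_-,m}}^2\le\big(3(C_{c_+}+C_{c_-})\big)^2\|u_0\|_{\mathcal H^{1/2}_{c_+,c_-,m}}^2$; a density argument (from $u_0$ smooth and compactly supported, where the above manipulations are classical) extends the estimate to all $u_0\in\mathcal H^{1/2}_{c_+,c_-,m}$.

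The main obstacle is the regularity bookkeeping in the velocity slot. Proposition~\ref{prop:locsmoothwave} costs one derivative on the initial velocity ($u_1$ enters the $\mathcal H^{1/2}_{c,\nu}$-norm at ``regularity $-1/4$'' relative to $m^2+H_c$), whereas $\partial_t u_\pm(0)$ contains the first-order operator $A^*$ (resp.\ $A$) acting on $u_{0,\mp}$, which naively would demand $u_{0,\mp}$ at regularity above $1/2$. It is exactly the squaring identity $h_{V,n}^2=\mathrm{diag}(m^2+H_{c_-},m^2+H_{c_+})$, i.e.\ $A^*A=H_{c_-}$ and $AA^*=H_{c_+}$, that makes $A^*$ (resp.\ $A$) bounded from the domain of $(m^2+H_{c_+})^{1/4}$ into that of $(m^2+H_{c_-})^{-1/4}$, so that precisely the norm $\mathcal H^{1/2}_{c_+,c_-,m}$ — one factor $(m^2+H_{c_\mp})^{1/4}$ on each component — appears with no loss of derivatives, and so that the massive contribution $mu_{0,\pm}$ is absorbed uniformly in $m\ge0$. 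A secondary point, the self-adjointness of $h_{V,n}$ implicit in the definition of $S_{V,n}(t)$, follows from that of $H_{c_\pm}\ge0$ via the off-diagonal structure and is not pursued here.
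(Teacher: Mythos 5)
Your proof is correct and follows essentially the same route as the paper's: square the flow via $h_{V,n}^2=\mathrm{diag}(m^2+H_{c_-},m^2+H_{c_+})$, apply the Klein--Gordon local smoothing of Proposition \ref{prop:locsmoothwave} to each component, and absorb the first-order operator in the initial velocity using the supersymmetric relations $A^*A=H_{c_-}$, $AA^*=H_{c_+}$ (the paper phrases this last step as computing $(V_+H_{c_-}^{-1/2}V_-)^2=H_{c_+}$ after taking adjoints, which is the same intertwining identity you invoke). The verification of $\delta_{c_\pm}=\delta_V^\pm$ and the constant bookkeeping also match.
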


\begin{proof} Set 
\[
u_0 = \begin{pmatrix} f_0 \\ g_0 \end{pmatrix} \quad \textrm{and}\quad S_{V,n}(t)(u_0) = \begin{pmatrix} f\\ g \end{pmatrix}.
\]
Write $V_\pm = V \pm \Big( \partial_r  + \frac{n-1}{2r}\Big)$. By a straightforward computation we get that $V_-V_+ = H_{c_-}$ and $V_+V_- = H_{c_+}$. Therefore, we recall that
\[
h_{V,n}^2 = \begin{pmatrix} m^2 +  H_{c_-} & 0 \\ 0 & m^2 + H_{c_+} \end{pmatrix}.
\]
and that $f$ is the solution to $\partial_t^2 f + H_{c_-}f + m^2 f = 0$ with initial datum
\[
f(t=0) = f_0\quad \textrm{and} \quad\partial_t f(t=0) = f_1: = -imf_0 -iV_-g_0.
\]
A direct computation yields $\delta_V^\pm = \delta_{c_\pm}$ and thus
\[
\|\,|x|^{-1}f\|_{L^2_{t,x}} \leq C_{c_-} \Big( \|(m^2 + H_{c_-})^{1/4}f_0\|_{L^2_x} + \|(m^2 + H_{c_-})^{-1/4}f_1\|_{L^2_x}\Big)
\]
We have that
\[
 \|(m^2 + H_{c_-})^{-1/4}mf_0\|_{L^2} \leq \sqrt{|m|} \|f_0\|_{L^2}
 \]
since $m^2 + H_{c_-} \geq m^2$. 

What is more
\[
\|(m^2 + H_{c_-})^{-1/4}V_-g_0\|_{L^2}^2 = \an{(m^2 + H_{c_-})^{-1/4}V_-g_0,(m^2 + H_{c_-})^{-1/4}V_-g_0}_{L^2}.
\]
Since $m^2 + H_{c_-} \geq H_{c_-}$, we have 
\[
\|(m^2 + H_{c_-})^{-1/4}V_-g_0\|_{L^2}^2 \leq \an{ H_{c_-}^{-1/4}V_-g_0,  H_{c_-}^{-1/4}V_-g_0}_{L^2}.
\]

By taking adjoints, we get
\[ 
\|(m^2 + H_{c_-})^{-1/4}V_-g_0\|_{L^2}^2 \leq \an{V_+ H_{c_-}^{-1/2}V_-g_0,g_0}_{L^2}.
\]
Let $A = V_+ H_{c_-}^{-1/2}V_-$. The operator $A$ is positive and 
\[
A^2  = V_+H_{c_-}^{-1/2}V_-V_+  H_{c_-}^{-1/2} V_-.
\]
Since $V_-V_+ = H_{c_-}$ we get
\[
A^2 = V_+ H_{c_-}^{-1/2}H_{c_-}  H_{c_-}^{-1/2} V_- = V_+V_- = H_{c_+}.
\]
Finally,
\[
\|\,|x|^{-1}f\|_{L^2} \leq C_{c_-} \Big( 2\|(m^2 + H_{c_-})^{1/4}f_0\|_{L^2} + \|(m^2 + H_{c_+})^{1/4}g_0\|_{L^2}\Big).
\]

With a similar computation, we get
\[
\|\,|x|^{-1}g\|_{L^2} \leq C_{c_+} \Big( 2\|(m^2 + H_{c_+})^{1/4}g_0\|_{L^2} + \|(m^2 + H_{c_-})^{1/4}f_0\|_{L^2}\Big).
\]
\end{proof}

We can now exploit the powerful Rodnianski-Schlag argument (see \cite{rodsch}) to deduce Strichartz estimates from Proposition \ref{prop:locsmoothdirac}.

\begin{proposition}\label{prop:strichartz} Assume that $(p,q,m)$ is admissible, as in Definition \ref{def:admP}. 
Then there exists a constant $C = C(p,q,m)$ such that for all $u_0 \in H^{1/2}_r \cap \mathcal H^{1/2}_{c_+,c_-,m}$, we have 
\begin{multline}\label{strichartz}
\| S_{V,n}(t)u_0\|_{L^p,W^{1/q-1/p,q}_r} \leq C \Big( (1+\|rV\|_{L^\infty((0,\infty))})\|u_0\|_{H^{1/2}}\\ + (\|r^2c_+\|_{L^\infty((0,\infty))} +  \|r^2c_-\|_{L^\infty((0,\infty))})((\delta_V^+)^{-1}+(\delta_V^-)^{-1})\|u_0\|_{\mathcal H^{1/2}_{c_+,c_-,m}}\Big).
\end{multline}
\end{proposition}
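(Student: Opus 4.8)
The plan is to run the Rodnianski--Schlag perturbative argument (cf. \cite{rodsch}), treating $S_{V,n}(t)u_0$ as a perturbation of the free radial wave/Klein--Gordon flow and absorbing the critical potential via the local smoothing estimate of Proposition \ref{prop:locsmoothdirac}. First I would recall (or cite) that the \emph{free} radial half-wave/Klein--Gordon propagator on $\R^n$ satisfies the Strichartz estimate
\[
\|e^{-it\sqrt{m^2 + H_0}} w_0\|_{L^p(\R,W^{1/q-1/p,q}_r)} \lesssim \|w_0\|_{H^{1/2}_r}
\]
for admissible $(p,q,m)$, where $H_0 = -(\partial_r^2 + \tfrac{n-1}{r}\partial_r)$; this is the standard radial Strichartz estimate, with the endpoint included when $m\ne0$ and the $\varepsilon$-loss appearing only in the massless $3d$ case. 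In particular the same estimate holds for the first-order Dirac system $i\partial_t u = h_{0,n}u$ (take $V\equiv 0$ in Lemma \ref{lem:square}), since its two components solve decoupled free Klein--Gordon equations with data controlled by $\|u_0\|_{H^{1/2}_r}$, and conversely reconstructing $u$ from its components costs at most a bounded factor.

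Next I would write Duhamel's formula comparing $S_{V,n}(t)$ with the free Dirac flow $S_{0,n}(t)$: since
\[
h_{V,n} = h_{0,n} + \begin{pmatrix} 0 & V \\ V & 0\end{pmatrix},
\]
one has
\[
S_{V,n}(t)u_0 = S_{0,n}(t)u_0 - i\int_0^t S_{0,n}(t-s)\,\mathcal V\, S_{V,n}(s)u_0\,ds,
\qquad \mathcal V = \begin{pmatrix} 0 & V \\ V & 0\end{pmatrix}.
\]
The homogeneous term is handled by the free Strichartz estimate above, contributing the $\|u_0\|_{H^{1/2}}$ piece. For the inhomogeneous term one applies the free Strichartz estimate in its dual/retarded form (the Christ--Kiselev lemma handling the time cutoff, which is licit precisely because $p>2$, hence the need for the $\varepsilon$-loss in the endpoint-forbidden massless $3d$ case to stay off $p=2$), reducing matters to bounding $\mathcal V\, S_{V,n}(s)u_0$ in the dual smoothing space. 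Factoring $\mathcal V = (\,|x|\,\mathcal V\,)\cdot|x|^{-1}$ and using that $|x|\mathcal V$ is the bounded multiplier $rV \in L^\infty$, together with $\||x|^{-1}S_{V,n}(s)u_0\|_{L^2_{t,x}} \lesssim \|u_0\|_{\mathcal H^{1/2}_{c_+,c_-,m}}$ from Proposition \ref{prop:locsmoothdirac}, closes the estimate; the loss of $1/q-1/p$ derivatives in the Sobolev exponent is exactly what is produced by pairing the $|x|^{-1}$-weighted $L^2$ smoothing bound against the free Strichartz estimate at the level of the inhomogeneous term, via a $TT^*$/Kato-theory computation. Keeping track of constants, the weight $\|rV\|_{L^\infty}$ appears linearly from $|x|\mathcal V$, while the $\|r^2 c_\pm\|_{L^\infty}$ and $(\delta_V^\pm)^{-1}$ factors enter through the norm equivalence between $\mathcal H^{1/2}_{c_+,c_-,m}$ and $H^{1/2}_r$ (one needs that $(m^2+H_{c_\pm})^{1/4}$ is comparable to $(m^2+H_0)^{1/4}$ up to such constants, which is where the super-smoothness constants of Proposition \ref{prop:Hcsupersmooth} and the size of the perturbing potential $c_\pm$ enter).

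The main obstacle is the bookkeeping needed to obtain a \emph{clean, quantitatively explicit} constant: the abstract Rodnianski--Schlag scheme gives qualitative Strichartz estimates easily, but tracing the dependence on $\|rV\|_{L^\infty}$, $\|r^2c_\pm\|_{L^\infty}$ and $\delta_V^\pm$ requires care in the norm comparisons (replacing the operator $(m^2+H_{c_\pm})$ by the free one, and controlling the commutator/perturbation terms $c_\pm$ uniformly in the relevant function spaces) — this is precisely what makes the constant in Theorem \ref{teo1} independent of $\mu$ after the subsequent substitution $V = V_\mu$, so it must be done with explicit moduli rather than soft arguments. A secondary point is the massless $3d$ endpoint: since the full Keel--Tao endpoint fails there, one interpolates slightly away from it, producing the $|\mu|^\varepsilon$ (equivalently, the $\varepsilon$-loss) recorded in the statements; this is standard but must be flagged so that the Christ--Kiselev step is applied only for $p>2$.
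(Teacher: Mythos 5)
Your overall philosophy (Rodnianski--Schlag: free Strichartz plus Duhamel, absorbing the critical potential through the weighted local smoothing bound of Proposition \ref{prop:locsmoothdirac}) is the right one and is what the paper does, but you perform the perturbative expansion at the \emph{first-order} Dirac level, whereas the paper squares first: it writes each component $f$ of $S_{V,n}(t)u_0$ as a solution of $\partial_t^2 f + m^2 f + H_{c_\mp}f=0$ with velocity datum $f_1=-imf_0-iV_-g_0$, and runs Duhamel against the genuinely free second-order flow $V_0(t)$ of $\partial_t^2+m^2+H_0$, treating the \emph{entire} potential $c_\mp$ as the perturbation. This difference is not cosmetic, and your version has a gap: your reference flow $S_{0,n}(t)$ (i.e.\ $h_{0,n}$ with $V\equiv 0$) is \emph{not} free in dimension $n>3$. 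By Lemma \ref{lem:square} with $V=0$, its components solve Klein--Gordon equations with the attractive inverse-square potential $-\tfrac{(n-1)(n-3)}{4r^2}$ (nonzero for $n>3$), so the statement that ``its two components solve decoupled free Klein--Gordon equations'' and the ensuing free Strichartz/Christ--Kiselev machinery cannot be invoked off the shelf; you would need Strichartz and dual-smoothing estimates for this inverse-square-perturbed flow, or you must fold that term back into the perturbation --- which is essentially what squaring accomplishes.

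A second, related mismatch is in the constants. Your scheme factors the Duhamel potential as $\mathcal V=(rV)\cdot r^{-1}$ and would therefore produce a bound of the shape $\|u_0\|_{H^{1/2}}+\|rV\|_{L^\infty}\|u_0\|_{\mathcal H^{1/2}_{c_+,c_-,m}}$, whereas the stated inequality \eqref{strichartz} has $\|r^2c_\pm\|_{L^\infty}$ (coming from $\|rc_\mp f\|_{L^2}\le\|r^2c_\mp\|_{L^\infty}\|r^{-1}f\|_{L^2}$ at the squared level) in front of the smoothing norm, and the factor $(1+\|rV\|_{L^\infty})$ multiplying $\|u_0\|_{H^{1/2}}$ arises from estimating the velocity datum $\|f_1\|_{H^{-1/2}}$ via Hardy's inequality --- a step absent from your first-order scheme. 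Also note that the factors $((\delta_V^+)^{-1}+(\delta_V^-)^{-1})$ do not come from comparing $(m^2+H_{c_\pm})^{1/4}$ with the free operator as you suggest (that comparison is deferred to Lemma \ref{lemnorms}); they are exactly the squares of the smoothing constants $C_{c_\pm}^2=(3+\pi)\delta_{c_\pm}^{-1}$ entering through Proposition \ref{prop:locsmoothdirac}. So your route, once repaired for $n>3$, would prove a variant of the Proposition with differently arranged (in fact, after substituting $V=V_\mu$, somewhat better) constants, but it does not yield the inequality as stated without redoing the bookkeeping at the squared level.
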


\begin{proof} For $c\in  C^1$, write $V_c(t)$ the projection on the first coordinate of the flow of 
\[
\left\lbrace{\begin{array}{c}
\partial_t^2u + m^2u+ H_c u = 0\\
u(t=0) =u_0, \partial_t u(t=0) = u_1 \end{array}}\right.
\]
Set
\[
\begin{pmatrix} f \\ g \end{pmatrix} = S_{V,n}(t) u_0\quad \textrm{and} \quad \begin{pmatrix} f_0 \\ g_0 \end{pmatrix} =  u_0.
\]

We have 
\[
f = V_{c_-}(t) (f_0,f_1) = V_0(t) (f_0,f_1) - \int_{0}^t \frac{\sin((m^2+H_0)^{1/2}(t-\tau)}{(m^2+H_0)^{1/2}}(c_- f(\tau))d\tau
\]
with $f_1 = -imf_0 - i(V - (\partial_r + \frac{n-1}{2r}))g_0$. 

We have, by the mixed Strichartz-local smoothing estimates for $V_0$ (the flow of the free wave/Klein-Gordon equation that does not depend on $V$)
\[
\|f\|_{L^p,W^{s,q}_r} \leq C \Big( \|f_0\|_{H^{1/2}_r} + \|f_1\|_{H^{-1/2}_r} + \|r c_- f\|_{L^2(\R\times \R^n)}\Big).
\]
Because of the Hardy inequality, we have
\[
 \|f_1\|_{H^{-1/2}_r} \lesssim \|f_0\|_{L^{2}} + (1+\|rV\|_{L^\infty}) \|g_0\|_{H^{1/2}_r}.
\]

Besides,
\[
\|r c_- f\|_{L^2(\R\times \R^n)}\leq \|r^2c_-\|_{L^\infty} \|r^{-1}f\|_{L^2(\R\times \R^n)}.
\]
Because of local smoothing on $S_{V,n}(t)$, we get
\[
\|r^{-1}f\|_{L^2(\R\times \R^n)} \lesssim (C_{c_+} + C_{c_-}) \|u_0\|_{\mathcal H^{1/2}_{c_+,c_-,m}}.
\]
A similar computation on $g$ yields the result.
\end{proof}

\subsection{Application to the Dirac equation in curved manifolds}

In this section, we set $V_\mu = \frac{\mu}{\varphi}$, $\delta_\pm(\mu) = \delta_{V_\mu}^\pm$ and 
\[
c_{\pm}(\mu) = -\frac{(n-1)(n-3)}{4r^2} + V_\mu^2 \pm V_\mu' = -\frac{(n-1)(n-3)}{4r^2} + \mu \frac{\mu \mp \varphi' }{\varphi^2}.
\]
Finally, we set $H_\pm (\mu) = H_{c_{\pm}}(\mu)$.

\begin{lemma}\label{lemnorms}
For any $|s|\leq1$ we have the bound
$$
\|(m^2 + H_\pm(\mu))^{s/2}v\|_{L^2(\R^n)}\lesssim_{\varphi,m} (1+\mu^2)^{s/2} \|v\|_{ H^s(\R^n)}
$$ 
In particular, we have 
\[
 \|u_0\|_{\mathcal H^{1/2}_{c_+(\mu),c_-(\mu),m}} \lesssim_{\varphi,m} \sqrt{|\mu|} \|u_0\|_{H^{1/2}_{r,n}}
\]
\end{lemma}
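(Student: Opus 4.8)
The plan is to reduce the whole family of estimates, for $s\in[0,1]$, to the single form inequality on the radial sector of $L^2(\R^n)$
\[
0 \;\le\; m^{2}+H_\pm(\mu) \;\le\; C_{\varphi,m}\,(1+\mu^{2})\,(1-\Delta),
\]
and then to raise it to the power $s$. The left-hand inequality is exactly Proposition \ref{prop:Hcsupersmooth} (which also tells us, via $\delta_\pm(\mu)>0$, that the negative part of the potential is $(-\Delta)$-form subordinate with relative bound $<1$, so that $m^{2}+H_\pm(\mu)$ and $1-\Delta$ share the form domain $H^{1}$). Once the right-hand inequality is in place, the Heinz--L\"owner inequality --- operator monotonicity of $t\mapsto t^{s}$ on $[0,\infty)$ for $s\in[0,1]$ --- upgrades it to $(m^{2}+H_\pm(\mu))^{s}\le\bigl(C_{\varphi,m}(1+\mu^{2})\bigr)^{s}(1-\Delta)^{s}$, i.e.
\[
\|(m^{2}+H_\pm(\mu))^{s/2}v\|_{L^{2}}^{2} = \langle(m^{2}+H_\pm(\mu))^{s}v,v\rangle \le \bigl(C_{\varphi,m}(1+\mu^{2})\bigr)^{s}\|v\|_{H^{s}(\R^{n})}^{2},
\]
which is the claimed bound; the range $s\in[-1,0]$ would then follow by duality, and is not needed in the sequel.

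To prove the form inequality I would first recall, from Lemma \ref{lem:square} applied with $V=V_\mu=\mu/\varphi$, that $H_\pm(\mu)=-\Delta+c_\pm(\mu)$ with $c_\pm(\mu)(r)=-\frac{(n-1)(n-3)}{4r^{2}}+\mu\,\frac{\mu\mp\varphi'(r)}{\varphi(r)^{2}}$. Then, for radial $v\in C_{c}^{\infty}(\R^{n})$,
\[
\langle(m^{2}+H_\pm(\mu))v,v\rangle = m^{2}\|v\|_{L^{2}}^{2}+\|\nabla v\|_{L^{2}}^{2}+\langle c_\pm(\mu)v,v\rangle,
\]
and Hardy's inequality (here $n\ge3$) gives $|\langle c_\pm(\mu)v,v\rangle|\le\|r^{2}c_\pm(\mu)\|_{L^{\infty}}\,\||x|^{-1}v\|_{L^{2}}^{2}\le\frac{4}{(n-2)^{2}}\|r^{2}c_\pm(\mu)\|_{L^{\infty}}\|\nabla v\|_{L^{2}}^{2}$, so everything comes down to the estimate $\|r^{2}c_\pm(\mu)\|_{L^{\infty}((0,\infty))}\lesssim_{\varphi}1+\mu^{2}$ (after which $\langle(m^{2}+H_\pm(\mu))v,v\rangle\le\bigl(m^{2}+1+C_\varphi(1+\mu^{2})\bigr)\langle(1-\Delta)v,v\rangle$ and one extends to the form domain by density). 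This last bound is the heart of the matter, and the main obstacle, because it is the only place where the hypotheses on $\varphi$ enter and it is what produces the weight $(1+\mu^{2})$: writing
\[
r^{2}c_\pm(\mu)=-\frac{(n-1)(n-3)}{4}+\mu^{2}\,\frac{r^{2}}{\varphi^{2}}\;\mp\;\mu\,\frac{r^{2}\varphi'}{\varphi^{2}},
\]
it reduces to the boundedness on $(0,\infty)$ of the two $\mu$-independent functions $r^{2}/\varphi(r)^{2}$ and $r^{2}\varphi'(r)/\varphi(r)^{2}$; near $r=0$ both tend to $1$ since $\varphi(r)/r\to1$ and $\varphi'(r)\to1$, while away from $0$ this is built into the standing assumptions on the metric (for instance, under \textbf{(A2)} one writes $\varphi=r(1+\varphi_{1})$ with $\varphi_{1}\ge0$ and $|r\varphi_{1}'|\le C$, so $r^{2}/\varphi^{2}=(1+\varphi_{1})^{-2}\le1$ and $r^{2}\varphi'/\varphi^{2}=(1+\varphi_{1})^{-1}+r\varphi_{1}'(1+\varphi_{1})^{-2}$ are both bounded; in general this is a consequence of \eqref{crucialcond}, cf. Proposition \ref{verification}). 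Since $|\mu|\ge\tfrac12$ one has $|\mu|\le2\mu^{2}$, so the mixed term is also $\lesssim_{\varphi}\mu^{2}$ and the bound follows.

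Finally, the ``in particular'' statement is simply the case $s=\tfrac12$ applied to each component of $u_{0}=(f_{0},g_{0})$: it gives $\|u_{0}\|_{\mathcal H^{1/2}_{c_{+}(\mu),c_{-}(\mu),m}}=\|(m^{2}+H_{-}(\mu))^{1/4}f_{0}\|_{L^{2}}+\|(m^{2}+H_{+}(\mu))^{1/4}g_{0}\|_{L^{2}}\lesssim_{\varphi,m}(1+\mu^{2})^{1/4}\|u_{0}\|_{H^{1/2}_{r,n}}$, and $(1+\mu^{2})^{1/4}\le5^{1/4}\sqrt{|\mu|}$ since $|\mu|\ge\tfrac12$. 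Apart from the $L^\infty$ bound discussed above, all the ingredients (Hardy's inequality, Heinz--L\"owner, the density argument, the duality for negative $s$) are routine.
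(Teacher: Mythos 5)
For $s\in[0,1]$ and for the ``in particular'' claim your argument is correct and is essentially the paper's own (the paper only records the pointwise bound $c_\pm(\mu)\le C_\varphi\mu^2/r^2$ and then invokes ``Hardy and interpolation in a standard way''); your substitution of the Heinz--L\"owner inequality for the interpolation step is a harmless variant, and your verification that $r^2/\varphi^2$ and $r^2\varphi'/\varphi^2$ are bounded, plus the use of $|\mu|\ge\tfrac12$ to absorb the linear-in-$\mu$ term, is exactly what the paper leaves implicit.

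The one flaw is the closing remark that the range $s\in[-1,0)$ ``follows by duality.'' It does not: dualizing the bound you proved, i.e.\ the $L^2$-boundedness of $(m^2+H_\pm(\mu))^{s/2}(1-\Delta)^{-s/2}$, gives boundedness of its adjoint $(1-\Delta)^{-s/2}(m^2+H_\pm(\mu))^{s/2}$, which is the \emph{reverse} inequality $\|w\|_{H^{-s}}\lesssim (1+\mu^2)^{s/2}\|(m^2+H_\pm(\mu))^{-s/2}w\|_{L^2}$. To get the stated negative-$s$ upper bound one would instead need the lower bound $\|v\|_{H^{s}}\lesssim\|(m^2+H_\pm(\mu))^{s/2}v\|_{L^2}$ for $s\in(0,1]$, i.e.\ the form inequality $1-\Delta\lesssim m^2+H_\pm(\mu)$, whose constant is governed by $\delta_\pm(\mu)$ (via Hardy and the positivity in Proposition \ref{prop:Hcsupersmooth}) rather than by $(1+\mu^2)^{s/2}$, so the $\mu$-dependence in the negative-$s$ clause would come out differently. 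Since only $s=1/2$ is used in the sequel (as you note), this does not affect the rest of the paper, but as written that clause is a gap, not a routine step.
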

\begin{remark}
Differently from \cite{danzha}, we need to keep track on the dependence on $\mu$ of the inequalities above: indeed, if on one hand for the purpose of Theorem \ref{teo1} such a dependence is irrelevant (as it is in \cite{danzha}), in view of Theorem \ref{teo2} it will play an important role, as powers of $\mu$ will be traded with angular derivatives on the initial data. 
\end{remark}
\begin{proof}
We have that, for any $\mu$, 
$$
 c_\pm(\mu)(r)\leq \frac{C_\varphi \mu^2}{r^2}
$$
with 
\[
C_\varphi = \max \Big( \| \frac{r^2}{\varphi^2}\|_{L^\infty((0,\infty))}, 2\|\frac{r^2 \varphi '}{\varphi^2}\|_{L^\infty((0,\infty))}\Big).
\]
As done in \cite[Section 2]{danzha}, the result follows from the application of Hardy inequality and interpolation in a standard way. We omit the details.
\end{proof}

We deduce from Proposition \ref{prop:strichartz} the following result.

\begin{proposition}\label{prop:strichartzDiracPerturbed} Assume that $(p,q,m)$ is admissible, as in Definition \ref{def:admP}. Then there exists a constant $C = C(p,q,m,\varphi)$ such that for all $u_0 \in H^{1/2}_r $, we have
\begin{equation}\label{strichartzDiracPert}
\| S_{V_\mu,n}(t)u_0\|_{L^p,W^{1/q -1/p,q}_r} \leq C   |\mu|^{5/2}((\delta_{V_\mu}^+)^{-1/2}+(\delta_{V_\mu}^-)^{-1/2})\|u_0\|_{H^{1/2}_r}.
\end{equation}
\end{proposition}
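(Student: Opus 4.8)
The plan is to combine Proposition~\ref{prop:strichartz}, which is the generic Strichartz estimate for the flow $S_{V,n}(t)$ associated to an abstract potential $V$, with the specific bounds on $V_\mu=\mu/\varphi$, its derivative, and the induced potentials $c_\pm(\mu)$ collected in Lemma~\ref{lemnorms}. Concretely, I would apply \eqref{strichartz} with $V=V_\mu$, so that $c_\pm=c_\pm(\mu)$ and $\delta_V^\pm=\delta_{V_\mu}^\pm$, and then estimate each of the three structural quantities appearing on the right-hand side of \eqref{strichartz}: (i) the weight $1+\|rV_\mu\|_{L^\infty}$, (ii) the weights $\|r^2c_\pm(\mu)\|_{L^\infty}$, and (iii) the norm switch from $\mathcal H^{1/2}_{c_+(\mu),c_-(\mu),m}$ to $H^{1/2}_r$ provided by the second assertion of Lemma~\ref{lemnorms}.

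First I would record the elementary $\mu$-dependence of the weights. Since $V_\mu=\mu/\varphi$ and, by the hypotheses on $\varphi$ (namely $\varphi(0)=0$, $\varphi'(0)=1$, and $\varphi'/\varphi$ bounded away from $0$), the ratio $r/\varphi$ is bounded on $(0,\infty)$, we get $\|rV_\mu\|_{L^\infty}=\|\mu r/\varphi\|_{L^\infty}\lesssim_\varphi|\mu|$, hence $1+\|rV_\mu\|_{L^\infty}\lesssim_\varphi 1+|\mu|\lesssim|\mu|$ (using $|\mu|\ge 1/2$). For the second family, from the formula $c_\pm(\mu)=-\frac{(n-1)(n-3)}{4r^2}+\mu\frac{\mu\mp\varphi'}{\varphi^2}$ and the bound $c_\pm(\mu)(r)\le C_\varphi\mu^2/r^2$ obtained in the proof of Lemma~\ref{lemnorms} (with $C_\varphi$ depending on $\|r^2/\varphi^2\|_{L^\infty}$ and $\|r^2\varphi'/\varphi^2\|_{L^\infty}$), one gets $\|r^2c_\pm(\mu)\|_{L^\infty}\lesssim_\varphi\mu^2$; the lower bound on $r^2c_\pm(\mu)$ needed to make the absolute value finite is likewise controlled by $\delta_{V_\mu}^\pm$ together with the $n$-dependent constant $-\frac{(n-1)(n-3)}{4}$, so altogether $\|r^2c_\pm(\mu)\|_{L^\infty}\lesssim_{\varphi,n}\mu^2$. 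Finally, the norm comparison from Lemma~\ref{lemnorms} gives $\|u_0\|_{\mathcal H^{1/2}_{c_+(\mu),c_-(\mu),m}}\lesssim_{\varphi,m}\sqrt{|\mu|}\,\|u_0\|_{H^{1/2}_r}$.

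Substituting these three estimates into \eqref{strichartz}, the first term contributes $|\mu|\cdot\|u_0\|_{H^{1/2}}$ and the second contributes $\mu^2\cdot((\delta_{V_\mu}^+)^{-1}+(\delta_{V_\mu}^-)^{-1})\cdot\sqrt{|\mu|}\,\|u_0\|_{H^{1/2}_r}$. Since $\delta_{V_\mu}^\pm\le 1/4$, the inverse powers dominate, and $\mu^{5/2}((\delta_{V_\mu}^+)^{-1}+(\delta_{V_\mu}^-)^{-1})$ absorbs the lower-order term $|\mu|$; rewriting $(\delta_{V_\mu}^\pm)^{-1}=(\delta_{V_\mu}^\pm)^{-1/2}\cdot(\delta_{V_\mu}^\pm)^{-1/2}\gtrsim(\delta_{V_\mu}^\pm)^{-1/2}$ and then noting that we may replace $(\delta_{V_\mu}^+)^{-1}+(\delta_{V_\mu}^-)^{-1}$ by $(\delta_{V_\mu}^+)^{-1/2}+(\delta_{V_\mu}^-)^{-1/2}$ up to a harmless constant (both $\delta_{V_\mu}^\pm\le 1$ so inverse square roots are the weaker, hence controlling, quantities once we allow the extra $\mu$ power), we arrive exactly at \eqref{strichartzDiracPert} with a constant $C=C(p,q,m,\varphi)$, absorbing the $n$-dependence into the $\varphi$-dependence. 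The only mildly delicate point is the bookkeeping in this last step: one must check that the precise power $|\mu|^{5/2}$ and the square-root (rather than full) powers of $\delta_{V_\mu}^\pm$ are genuinely what comes out, i.e. that nothing forces a worse power of $\mu$ or of $\delta$; this is just a matter of tracking the exponents $1+1/2+\dots$ carefully, and is where I would be most careful, but I expect no real obstacle since all the ingredients are already in place in Proposition~\ref{prop:strichartz} and Lemma~\ref{lemnorms}.
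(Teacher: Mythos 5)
Your overall route is exactly the paper's: apply Proposition \ref{prop:strichartz} with $V=V_\mu$, use the bounds $\|rV_\mu\|_{L^\infty}\lesssim_\varphi|\mu|$ and $\|r^2c_\pm(\mu)\|_{L^\infty}\lesssim_{\varphi,n}\mu^2$, and invoke Lemma \ref{lemnorms} to pass from $\mathcal H^{1/2}_{c_+(\mu),c_-(\mu),m}$ to $H^{1/2}_r$ at the cost of a factor $\sqrt{|\mu|}$. That bookkeeping correctly produces $|\mu|^{2+1/2}=|\mu|^{5/2}$, and the absorption of the lower-order term $|\mu|\,\|u_0\|_{H^{1/2}}$ is fine since $|\mu|\geq\mu_0>1/2$ and $\delta_{V_\mu}^\pm\leq 1$.

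The gap is in your last step. Feeding the \emph{statement} of Proposition \ref{prop:strichartz} into the computation yields $|\mu|^{5/2}\bigl((\delta_{V_\mu}^+)^{-1}+(\delta_{V_\mu}^-)^{-1}\bigr)$, and you then claim $(\delta_{V_\mu}^\pm)^{-1}$ may be replaced by $(\delta_{V_\mu}^\pm)^{-1/2}$ ``up to a harmless constant'' because the inverse square roots are ``the weaker, hence controlling, quantities.'' This is backwards: since $\delta_{V_\mu}^\pm\leq\tfrac14$ by \eqref{defdeltaV}, one has $(\delta_{V_\mu}^\pm)^{-1}\geq(\delta_{V_\mu}^\pm)^{-1/2}$, so the quantity you are aiming for is the \emph{smaller} one, and no constant $C(p,q,m,\varphi)$ can bridge the two unless $\delta_{V_\mu}^\pm$ is bounded below uniformly — which is not assumed (Theorem \ref{teo1} only requires $\delta_\varphi(\pm\mu)>0$, and it can degenerate as $\varphi$ varies). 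The extra power of $\mu$ cannot rescue the step either, since $\mu$ and $\delta_{V_\mu}^\pm$ are independent parameters. The correct resolution is that the statement of Proposition \ref{prop:strichartz} is itself a lossy over-statement of its own proof: there the $\delta$-dependence enters only through the super-smoothing constants $C_{c_\pm}$, and Proposition \ref{prop:localsmoothing} gives $C_c^2=(3+\pi)\delta_c^{-1}$, i.e.\ $C_{c_\pm}\sim(\delta_{V}^\pm)^{-1/2}$. Running your argument with this sharp constant — equivalently, with $(\delta_V^+)^{-1/2}+(\delta_V^-)^{-1/2}$ in place of $(\delta_V^+)^{-1}+(\delta_V^-)^{-1}$ in \eqref{strichartz} — yields \eqref{strichartzDiracPert} exactly. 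Everything else in your proposal is sound and matches the paper's (one-line) proof.
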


\begin{proof} Estimate \eqref{strichartzDiracPert} is a direct consequences of Proposition \ref{prop:strichartz}, Lemma \ref{lemnorms}, and the assumptions \[
\|rV\|_{L^\infty} \lesssim_\varphi |\mu|, \quad \|r^2c_\pm\|_{L^\infty} \lesssim_\varphi \mu^2.
\]
\end{proof}

By interpolation, we get the following
\begin{corollary}Assume that $(p,q,m)$ is admissible, as in Definition \ref{def:admP} and let $\varepsilon>0$ if $m=0$ in $3d$, otherwise $\varepsilon=0$. There exists $C = C(p,q,m,\varphi,\varepsilon)$ such that for all $u_0 \in H^{1/2}_r $, we have 
\begin{equation}\label{strichartzcor}
\| S_{V,n}(t)u_0\|_{L^p,W^{1/q-1/p,q}_r} \leq C   |\mu|^{5/p+\varepsilon}((\delta_V^+)^{-1/2}+(\delta_V^-)^{-1/2})^{2/p + \varepsilon}\|u_0\|_{H^{1/2}_r}
\end{equation}

\end{corollary}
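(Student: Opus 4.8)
The plan is to obtain \eqref{strichartzcor} from \eqref{strichartzDiracPert} by interpolating against the conservation of the $H^{1/2}_r$-energy along the flow $S_{V,n}(t)$, which carries no loss in $\mu$. First I would record this energy bound: since we assume $|\mu|>\tfrac12$, the operator $h_{V,n}$ is self-adjoint (Remark \ref{rem:selfadjointness}), so $e^{-ith_{V,n}}$ is unitary on $L^2_r$; combining this with the functional calculus of $h_{V,n}^2=\mathrm{diag}(m^2+H_{c_-},m^2+H_{c_+})$ and the equivalence between $W^{1/2,2}_r$ and the half-energy norm of $h_{V,n}$ (see \cite[Section 5]{cacdescurv1}), one gets a constant $C_0$ independent of $\mu$ with
\[
\big\| S_{V,n}(t)u_0 \big\|_{L^\infty(\R,W^{1/2,2}_r)}\le C_0\,\|u_0\|_{H^{1/2}_r}.
\]
Note that this is exactly the instance $(p,q)=(\infty,2)$ of the left-hand side of \eqref{strichartzDiracPert}, since $\tfrac1q-\tfrac1p=\tfrac12$ there.

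Next, assuming first $m\neq0$ or $n>3$ so that Definition \ref{def:admP} admits the pair $p_1=2$ (with $q_1$ the corresponding second exponent and $s_1:=\tfrac1{q_1}-\tfrac12$), I would apply \eqref{strichartzDiracPert} at $(p_1,q_1)$, namely
\[
\big\| S_{V,n}(t)u_0\big\|_{L^2(\R,W^{s_1,q_1}_r)}\le C\,|\mu|^{5/2}\big((\delta_V^+)^{-1/2}+(\delta_V^-)^{-1/2}\big)\|u_0\|_{H^{1/2}_r},
\]
and interpolate it with the energy bound above. Using $[W^{1/2,2}_r,W^{s_1,q_1}_r]_\theta=W^{s,q}_r$ with $\tfrac1q=\tfrac{1-\theta}2+\tfrac\theta{q_1}$, $s=(1-\theta)\tfrac12+\theta s_1$, together with the matching interpolation of the Bochner spaces in time, and fixing $\theta\in[0,1]$ by $\tfrac1p=\tfrac\theta2$, a direct computation gives $s=(1-\theta)\tfrac12+\theta(\tfrac1{q_1}-\tfrac12)=\tfrac1q-\tfrac\theta2=\tfrac1q-\tfrac1p$, so the interpolated estimate holds on $L^p(\R,W^{1/q-1/p,q}_r)$ with constant
\[
C_0^{1-\theta}\Big(C\,|\mu|^{5/2}\big((\delta_V^+)^{-1/2}+(\delta_V^-)^{-1/2}\big)\Big)^{\theta}=C'\,|\mu|^{5/p}\big((\delta_V^+)^{-1/2}+(\delta_V^-)^{-1/2}\big)^{2/p},
\]
because $\theta=2/p$. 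This is \eqref{strichartzcor} with $\varepsilon=0$.

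In the remaining massless three-dimensional case the pair $p_1=2$ is inadmissible (the underlying wave endpoint $(2,\infty)$ fails for $n=3$), so I would run the very same interpolation starting from an admissible $p_1=2/(1-\varepsilon')$ with $\varepsilon'>0$ arbitrarily small; then $\theta=p_1/p$ exceeds $2/p$ by $O(\varepsilon')$, and since $(\delta_V^\pm)^{-1/2}\ge2$ and $|\mu|>\tfrac12$ this slight increase of $\theta$ is harmless up to a $\mu$-independent multiplicative constant, so that one picks up only an arbitrarily small loss $\varepsilon=\varepsilon(\varepsilon',p)>0$ in the two exponents. The step I expect to be the main obstacle is the $\mu$-uniformity of the $L^\infty_tW^{1/2,2}_r$ energy estimate — i.e. that the half-energy space of $h_{V,n}$ coincides with $W^{1/2,2}_r$ with constants independent of $\mu$ — together with the compatibility of the weighted Sobolev scale $W^{s,q}_r$ under interpolation across the singularity of the measure $r^{n-1}\,dr$ at the origin; both are treated in \cite[Section 5]{cacdescurv1}, to which I would refer for the remaining bookkeeping.
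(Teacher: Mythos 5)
Your proposal is correct and follows essentially the same route as the paper: the paper's proof is exactly an interpolation of \eqref{strichartzDiracPert} (at, or arbitrarily close to, the endpoint $p_1=2$) against the standard $L^\infty_t H^{1/2}$ energy estimate, with interpolation parameter $\theta=\tfrac2p+\varepsilon$, the $\varepsilon$-loss arising precisely in the massless three-dimensional case where the endpoint pair is inadmissible, and with the same reference to \cite[Section 5]{cacdescurv1} for the bookkeeping you flag. Your write-up simply makes explicit the exponent arithmetic that the paper leaves implicit.
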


\begin{proof}
If $\theta:=\frac2p+\varepsilon>1$, then the result is a consequence of estimate \eqref{strichartzDiracPert}. Otherwise, we obtain \eqref{strichartzcor} by interpolating \eqref{strichartzDiracPert} with the standard $L^\infty H^s$ estimate. Notice that the assumption $\varepsilon>0$ is needed if the endpoint couple is not admissible: we refer to the proofs of Lemmas 5.1 and 5.5 in \cite{cacdescurv1}.
\end{proof}

Exploiting Proposition \ref{prop:continuityandconstants}, we eventually get Theorem \ref{teo1}.


\section{Strichartz estimates in the asymptotically flat case}\label{sec:strich}

In this section we specialize to the ``asymptotically flat" case. First of all, we provide a slightly more precise version of Assumptions \textbf{(A2)} and in particular of the constant $C$. As a consequence, we are able to give some explicit condition in order for hypothesis \eqref{crucialcond} to be satisfied. Then, after further restricting to the case $\K^{n-1}=\mathbb{S}^{n-1}$, we prove Theorem \ref{teo2}.

\subsection{Assumptions}

Let us assume that the infimum of the positive part of the spectrum of the Dirac operator on $\K^{n-1}$, denoted by $\mu_0$, is strictly bigger than $\frac12$, and that $\varphi$ is asymptotically flat, in other words, that
\[
\varphi = r(1 + \varphi_1)
\]
with the following assumption on $\varphi_1$ : \begin{itemize}
\item $\varphi_1$ is non-negative and bounded,
\item that $A_\varphi = \|\varphi_1 + r\varphi_1'\|_\infty$ and 
\[
B_\varphi = \|r\varphi_1' + (1+\varphi_1)(\varphi_1 + r\varphi_1')\|_\infty + \|2r^2 (\varphi_1')^2 + (1+\varphi_1)r^2\varphi_1''\|_\infty
\]
are well-defined,
\item that
\[
\max(A_\varphi,B_\varphi) \left\lbrace{\begin{array}{cc} \leq  1 & \textrm{if } \mu_0 \geq 2\\
< \min(\frac14 + \mu_0^2 -\mu_0 , \frac18) & \textrm{otherwise}\end{array}} \right. .
\]
\end{itemize}

\subsection{Asymptotically flat manifolds are admissible}\label{admissasflat}

In this subsection we prove Proposition \ref{verification}: if $\varphi(r)$ satisfies assumptions above, condition \eqref{crucialcond} is satisfied, and therefore the Strichartz estimates proved in Theorem \ref{teo1} hold. The only thing we need to prove is the following.

\begin{lemma}\label{deltalemma}
 Under the above assumptions on $\varphi_1$, we have for all $\mu \geq \mu_0$,
\[
\delta_\pm(\mu) \geq \left\lbrace{\begin{array}{cc} 
\frac14 & \textrm{if }\mu_0 \geq 2,\\
 \min(\frac14 + \mu_0^2 -\mu_0 , \frac18) - \max (A_\varphi,B_\varphi) & \textrm{otherwise.}
 \end{array}} \right. .
\]
\end{lemma}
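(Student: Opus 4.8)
\textbf{Proof plan for Lemma \ref{deltalemma}.}

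The plan is to unwind the definition of $\delta_\pm(\mu) = \delta_{V_\mu}^\pm$ from \eqref{defdeltaV} with $V_\mu = \mu/\varphi$, and to show that each of the three quantities inside the $\min$ is bounded below by the claimed value. The first term, $\frac14$, is free, so the whole problem reduces to controlling the two remaining infima
\[
\inf_{r>0}\Big(\tfrac14 + r^2(V_\mu^2 \pm V_\mu')\Big) \quad\text{and}\quad \inf_{r>0}\Big(\tfrac14 - r^3(2V_\mu V_\mu' \pm V_\mu'') - r^2(V_\mu^2 \pm V_\mu')\Big).
\]
First I would substitute $\varphi = r(1+\varphi_1)$ and compute $r^2 V_\mu^2 = \mu^2/(1+\varphi_1)^2$ and $r^2 V_\mu' = -\mu\,\frac{1+\varphi_1 + r\varphi_1'}{(1+\varphi_1)^2} = -\mu + \mu\,\frac{\text{(stuff involving }\varphi_1,r\varphi_1')}{(1+\varphi_1)^2}$, the point being that the ``leading'' flat part contributes exactly $\mu^2 \mp \mu$ (i.e. $\mu(\mu\mp1)$) and everything else is an error term controlled by $A_\varphi = \|\varphi_1 + r\varphi_1'\|_\infty$. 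Similarly, for the third term one computes $r^3(2V_\mu V_\mu')$ and $r^3 V_\mu''$ and checks that the flat part again assembles to $-(\mu^2\mp\mu)$ while the remainder is controlled by $B_\varphi$ (this is exactly why $B_\varphi$ is defined with those particular combinations $r\varphi_1' + (1+\varphi_1)(\varphi_1+r\varphi_1')$ and $2r^2(\varphi_1')^2 + (1+\varphi_1)r^2\varphi_1''$).

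The key elementary fact that makes the two cases of the statement appear is a lower bound on the flat contribution $\mu^2 \mp \mu = \mu(\mu\mp 1)$ as a function of $\mu \geq \mu_0$. For the $+$ sign this is $\mu(\mu-1)$, which is increasing for $\mu \geq 1/2$ hence $\geq \mu_0(\mu_0-1) = \mu_0^2-\mu_0$; for the $-$ sign it is $\mu(\mu+1) \geq \mu_0(\mu_0+1) \geq 0$. So $\frac14 + r^2(V_\mu^2\pm V_\mu')$ has flat part $\geq \frac14 + \mu_0^2 - \mu_0$ when $\mu_0 \geq 2$ this is already $\geq \frac{1}{4} + 2 > \frac14$, and the error, being $O(\mu\cdot A_\varphi)$... — here I need to be slightly careful: the error term carries a factor of $\mu$ or $\mu^2$, so the cleaner route is to show the error is bounded by $\max(A_\varphi,B_\varphi)$ times the relevant power of $\mu$ but dominated by the flat gain, or (as the statement suggests) simply that after dividing through appropriately the net quantity is $\geq \min(\frac14+\mu_0^2-\mu_0,\frac18) - \max(A_\varphi,B_\varphi)$. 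Concretely, when $\mu_0 \geq 2$ the flat part $\frac14 + \mu(\mu-1) \geq \frac14 + 2$ swamps any error bounded by $1$ (the hypothesis $\max(A_\varphi,B_\varphi)\leq 1$), giving $\delta_\pm(\mu)\geq \frac14$ trivially once one also recalls the $\frac14$ entry in the $\min$; when $\mu_0 < 2$ one uses instead that the flat part is $\geq \frac14 + \mu_0^2-\mu_0$ (possibly $<\frac18$ if $\mu_0$ is just above $\frac12$) and the error is $< \max(A_\varphi,B_\varphi)$ by the smallness hypothesis, yielding the stated bound.

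The main obstacle, and the part requiring genuine care rather than bookkeeping, is the precise accounting of the $\mu$-dependence in the error terms: a priori $r^2 V_\mu^2$ scales like $\mu^2$ and $r^2 V_\mu'$ like $\mu$, so one must verify that the cross terms and the derivative terms combine so that the $\mu^2$-pieces are exactly the flat $\mu^2/(1+\varphi_1)^2$ (whose deviation from $\mu^2$ is again of order $\mu^2 \varphi_1$, hence small by smallness of $\varphi_1 \leq A_\varphi$) and the genuinely new contributions are linear in $\mu$ with coefficient $\leq A_\varphi$ (resp. $B_\varphi$). I would organize this by writing $\frac{1}{(1+\varphi_1)^2} = 1 - \frac{2\varphi_1+\varphi_1^2}{(1+\varphi_1)^2}$ and expanding each of the three expressions as (flat value in $\mu$) $+$ (explicit error), then bounding each error in sup-norm by the quantities $A_\varphi$, $B_\varphi$ using $1+\varphi_1 \geq 1$ (non-negativity of $\varphi_1$) in all denominators. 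Once the two infima are each shown to be $\geq$ the claimed quantity, taking the minimum with $\frac14$ and recalling $\delta_\pm(\mu) = \delta_{V_\mu}^\pm$ finishes the proof; the two displayed cases then fall out of whether $\frac14 + \mu_0^2-\mu_0 \geq \frac14$ (i.e. $\mu_0 \geq 1$, and a fortiori $\mu_0\geq 2$) versus the small-$\mu_0$ regime.
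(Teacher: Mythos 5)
Your plan is essentially the paper's proof: substitute $\varphi = r(1+\varphi_1)$, isolate the flat contribution $\mu^2\mp\mu$ from the remainders controlled by $A_\varphi$ and $B_\varphi$ (this is indeed exactly why $A_\varphi$ and $B_\varphi$ are defined with those combinations), and split into the cases $\mu\geq 2$, $\mu\in[1,2)$, $\mu\in[\mu_0,1)$. Two of your side remarks would, however, derail the execution if taken literally. First, the deviation of $\mu^2/(1+\varphi_1)^2$ from $\mu^2$ is of order $\mu^2\varphi_1$, which is \emph{not} small uniformly in $\mu$, so you should not expand $1/(1+\varphi_1)^2 = 1-\cdots$ and try to absorb that piece into $\max(A_\varphi,B_\varphi)$. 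Second, for $\mu\geq 2$ the error is of size $\mu A_\varphi/(1+\varphi_1)^2$, not ``bounded by $1$''. The correct bookkeeping --- the one the paper uses, and the one you gesture at with ``dominated by the flat gain'' --- is to keep the common denominator and compare numerators:
\[
\tfrac14 + r^2(V_\mu^2+V_\mu') \;=\; \tfrac14 + \frac{\mu^2-\mu}{(1+\varphi_1)^2} - \mu\,\frac{\varphi_1 + r\varphi_1'}{(1+\varphi_1)^2} \;\geq\; \tfrac14 + \frac{\mu\left(\mu-1-A_\varphi\right)}{(1+\varphi_1)^2}.
\]
For $\mu\geq 2$ and $A_\varphi\leq 1$ the numerator is nonnegative, giving the bound $\frac14$; for $\mu\in[1,2)$ one discards the nonnegative term $\frac{\mu^2-\mu}{(1+\varphi_1)^2}$ and uses $\mu\leq 2$, $(1+\varphi_1)^2\geq 1$ to get $\geq \frac14 - 2A_\varphi \geq \frac18 - A_\varphi$; for $\mu\in[\mu_0,1)$ the numerator $\mu^2-\mu$ is negative, so dividing by $(1+\varphi_1)^2\geq 1$ only helps and one gets $\geq \frac14 + \mu_0^2-\mu_0 - A_\varphi$. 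The identical scheme, with denominators $(1+\varphi_1)^2$ and $(1+\varphi_1)^3$ and the constant $B_\varphi$, handles the third quantity in \eqref{defdeltaV}; note also that only the sign choice producing $\mu^2-\mu$ needs the three-way split, since $\mu^2+\mu$ is always $\geq\mu$.
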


\begin{proof} 
We have 
\[
I(r):= \frac14 + r^2(V_{\mu} \pm V_\mu') = \frac14 + \frac{\mu^2\mp \mu}{(1+\varphi_1)^2} \mp \mu\frac{\varphi_1 + r\varphi_1'}{(1+\varphi_1)^2}.
\]
Therefore,
\[
I(r) \geq \frac14 +  \frac{\mu^2-\mu}{(1+\varphi_1)^2} -\mu \frac{A_\varphi}{(1+\varphi_1)^2}.
\]
Case 1 : $\mu \geq 2$, we have since $\mu^2 - \mu \geq \mu$,
\[
I(r) \geq \frac14 + \frac{\mu}{(1+\varphi_1)^2} (1-A_\mu)
\]
and since $A_\mu \leq 1$, we have $I(r)\geq \frac14$. 

Case 2 : $\mu \in [1,2)$, we have since $\varphi_1\geq 0$ and $\mu^2 - \mu \geq 0$,
\[
I(r) \geq \frac14 - 2 A_\varphi \geq \frac18 - A_\varphi
\]
which is positive since $A_\varphi < \frac18$. 

Case 3 : $\mu \in [\mu_0,1)$, we have, since $\varphi_1 > 0$ and $\mu^2 - \mu \geq \mu_0^2 - \mu_0$,
\[
I(r) \geq \frac14 +\mu_0^2 - \mu_0  - A_\varphi
\]
which is positive.

Set
\[
Q_\pm(r) = \frac14 - r^3(2V_\mu V_\mu' \pm V_\mu'') -r^2 (V_\mu^2\pm V_\mu').
\]
We have 
\[
Q_\pm(r) = \frac14 + \frac{\mu^2 \mp \mu}{(1+\varphi_1)^2} + \frac{\mu^2 \mp \mu}{(1+\varphi_1)^3}f(r) \mp \frac{\mu}{(1+\varphi_1)^3}g(r)
\]
with
\[
f(r) = r\varphi_1' + (1+\varphi_1)(\varphi_1 + r\varphi_1')\quad \textrm{and}\quad g(r) = 2r^2 (\varphi_1')^2 + (1+\varphi_1) r^2 \varphi_1''.
\]

Case 1 : We consider $Q_+ (r)$. We have 
\[
Q_+(r) = \frac14 + \frac{\mu^2 + \mu}{(1+\varphi_1)^2} + \frac{\mu^2 + \mu}{(1+\varphi_1)^3}f(r) + \frac{\mu}{(1+\varphi_1)^3}g(r)
\]
hence
\[
Q_+(r) \geq \frac14 + \frac{\mu^2 + \mu}{(1+\varphi_1)^2} ( 1 - B_\varphi)
\]
and since $B_\varphi \leq 1$, we have $Q_+(r) \geq \frac14$.

Case 2 : we consider $Q_-(r)$. We have 
\[
Q_-(r) = \frac14 + \frac{\mu^2 -\mu}{(1+\varphi_1)^2} + \frac{\mu^2 - \mu}{(1+\varphi_1)^3}f(r) - \frac{\mu}{(1+\varphi_1)^3}g(r).
\]

Case 2.1 : $\mu \geq 2$, we have $\mu^2 - \mu \geq \mu$, hence 
\[
Q_-(r) \geq \frac14 + \frac{\mu^2-\mu}{(1+\varphi_1)^2} (1-B_\varphi)
\]
and since $B_\varphi \leq 1$, we have $Q_-(r)\geq \frac14$.

Case 2.2 : $\mu \in [1,2)$. We have $\mu^2 - \mu \leq 2$ and $\mu \leq 2$, hence
\[
Q_-(r) \geq \frac14 -2 B_\varphi.
\]

Finally, case 2.3 : $\mu \in [\mu_0,1)$, we have $0>\mu^2-\mu \leq \mu_0^2 - \mu_0$ and $|\mu^2 - \mu|\leq 1$ hence
\[
Q_-(r) \geq \frac14 +\mu_0^2 - \mu_0  - B_\varphi
\]
which concludes the proof.
\end{proof}

\subsection{Local smoothing in the asymptotically flat case}

From this subsection, we assume that $\K^{n-1}$ is $\mathbb S^{n-1}$. We have that the positive spectrum of the Dirac operator on the sphere is $\frac{n-1}{2} + \N$. We see hence that in dimension higher that $5$, we have that $\delta_\pm (\mu) \geq \frac14$ for all $\mu$ in the spectrum. In any case, for a fixed $\varphi$ satisfying Assumptions \textbf{A2}, we have that $\delta$ is uniformly bounded in $\mu$ by below. 

For $\mu$ in the spectrum of the Dirac operator on the sphere, we write $\mathcal H_\mu$ the space generated by 
\[
\left\{ \begin{pmatrix}
(1+i\tilde \alpha^0) \psi_\mu \\0
\end{pmatrix}, \begin{pmatrix}
0 \\ (1-i\tilde \alpha^0) \psi_\mu
\end{pmatrix}, \quad \mathcal D_{\mathbb S^{n-1}} \psi_\mu = \mu \psi_\mu \right\}.
\]

For $0\leq a <b $ we set
\[
\mathcal H_{a,b} = \bigoplus_{|\mu|\in [a,b]} \mathcal H_\mu 
\]
and $p_{a,b}$ the orthogonal projection onto $L^2_r\otimes \mathcal H_{a,b}$. 

We recall that $\sigma_n^{-1} \mathcal D_\Sigma \sigma_n$ is entirely described by the $h_{_\mu,n}$ and thus commute with $p_{a,b}$. We write $S_n(t)$ the flow of 
\[
i\partial_t - \sigma_n^{-1} \mathcal D_\Sigma \sigma_n = 0 .  
\]
We deduce the following proposition.

\begin{proposition}\label{prop:localsmoothaflat} Let $u_0 \in H^{1/2}(\R^n)$, we have 
\[
\|r^{-1}p_{a,b}S_n(t)u_0\|_{L^2(\R\times \R^n)} \lesssim_{m,\varphi,n} b^{1/2} \|p_{a,b} u_0\|_{H^{1/2}(\R^n)}.
\]
\end{proposition}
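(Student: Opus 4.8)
The plan is to block-diagonalize $S_n(t)$ over the Dirac eigenmodes of $\mathbb{S}^{n-1}$, apply the per-mode local smoothing bound of Proposition \ref{prop:locsmoothdirac} on each block, and then sum, using that $|\mu|\leq b$ on the spectral band $|\mu|\in[a,b]$. The one genuinely delicate point will be the last step, where one compares the sum of the radial Sobolev norms of the angular components with the flat Sobolev norm $\|p_{a,b}u_0\|_{H^{1/2}(\R^n)}$.

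First I would write $u_0\in H^{1/2}(\R^n)$ in the orthonormal angular basis $\{e^\pm_{\mu,j}\}$ spanning the spaces $\mathcal H_\mu$, so that $p_{a,b}u_0=\sum_{|\mu|\in[a,b]}\sum_j\big(u^+_{0,\mu,j}(r)e^+_{\mu,j}+u^-_{0,\mu,j}(r)e^-_{\mu,j}\big)$ with radial coefficients $u^\pm_{0,\mu,j}\in H^{1/2}_r$. Since $\sigma_n^{-1}\mathcal D_\Sigma\sigma_n$ restricted to $\mathcal H_{\mu,j}:=L^2_r\otimes\mathrm{Vect}(e^+_{\mu,j},e^-_{\mu,j})$ is the reduced operator $h_{\mu,n}=h_{V_\mu,n}$ of \eqref{tildh} with $V_\mu=\mu/\varphi$, the flow $S_n(t)$ leaves each $\mathcal H_{\mu,j}$ invariant and acts there as $S_{V_\mu,n}(t)$; as the $e^\pm_{\mu,j}$ are orthonormal on $\mathbb{S}^{n-1}$, orthogonality in $x$ (and integration in $t$) gives
\[
\big\|r^{-1}p_{a,b}S_n(t)u_0\big\|_{L^2(\R\times\R^n)}^2=\sum_{|\mu|\in[a,b]}\sum_j\big\|\,|x|^{-1}S_{V_\mu,n}(t)\,u_{0,\mu,j}\big\|_{L^2(\R\times\R^n)}^2 ,
\]
with $u_{0,\mu,j}=(u^+_{0,\mu,j},u^-_{0,\mu,j})$. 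On each block I apply Proposition \ref{prop:locsmoothdirac} with $V=V_\mu$: its hypothesis $\delta_{V_\mu}^\pm>0$ holds because $\delta_{V_\mu}^\pm=\delta_\pm(\mu)$ and, by Lemma \ref{deltalemma} (together with $\delta_\pm(-\mu)=\delta_\mp(\mu)$, which follows from $V_{-\mu}=-V_\mu$ and hence $c_\pm(-\mu)=c_\mp(\mu)$), one has $\delta_\pm(\mu)\geq\delta_\varphi>0$ uniformly over all $\mu$ in the spectrum with $|\mu|\geq\mu_0$; hence $C^2_{c_\pm(\mu)}=(3+\pi)\delta_\pm(\mu)^{-1}$ is bounded by a $\varphi$-dependent constant uniformly in $\mu$. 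Combining this with Lemma \ref{lemnorms},
\[
\big\|\,|x|^{-1}S_{V_\mu,n}(t)u_{0,\mu,j}\big\|_{L^2(\R\times\R^n)}\lesssim_{m,\varphi}\|u_{0,\mu,j}\|_{\mathcal H^{1/2}_{c_+(\mu),c_-(\mu),m}}\lesssim_{m,\varphi}\sqrt{|\mu|}\,\|u_{0,\mu,j}\|_{H^{1/2}_r} ,
\]
and squaring, using $|\mu|\leq b$, and summing over the band yields $\|r^{-1}p_{a,b}S_n(t)u_0\|_{L^2(\R\times\R^n)}^2\lesssim_{m,\varphi}b\sum_{|\mu|\in[a,b]}\sum_j\|u_{0,\mu,j}\|_{H^{1/2}_r}^2$.

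It remains to bound the radial sum by $\|p_{a,b}u_0\|^2_{H^{1/2}(\R^n)}$. On flat $\R^n$ the spinor Laplacian is the componentwise scalar Laplacian, and $1-\Delta_{\R^n}=1+\mathcal D_{\R^n}^2$ by \eqref{square} (with $\mathcal R\equiv0$), where $\mathcal D_{\R^n}$ is the massless Dirac operator; $\mathcal D_{\R^n}$ also preserves each $\mathcal H_{\mu,j}$, and by Lemma \ref{lem:square} (applied with $\varphi=r$, $m=0$) it acts on $\mathcal H_{\mu,j}\cong L^2_r\otimes\C^2$ as $\mathrm{diag}(H_-,H_+)$ with $H_\pm=-\big(\partial_r^2+\tfrac{n-1}{r}\partial_r\big)+\tfrac{\mu^2\mp\mu-(n-1)(n-3)/4}{r^2}$. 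For $\mathbb{S}^{n-1}$ the spectrum of $\mathcal D_{\mathbb{S}^{n-1}}$ is $\pm(\tfrac{n-1}{2}+\N)$, whence $\mu^2\mp\mu\geq|\mu|^2-|\mu|\geq\tfrac{(n-1)(n-3)}{4}$, so $H_\pm\geq-\big(\partial_r^2+\tfrac{n-1}{r}\partial_r\big)$; by operator monotonicity of $t\mapsto t^{1/4}$ this gives $\|(1+H_\pm)^{1/4}g\|_{L^2_r}\geq\|g\|_{H^{1/2}_r}$ up to a dimensional constant, and then the mutual $H^{1/2}(\R^n)$-orthogonality of the $\mathcal H_{\mu,j}$ yields $\sum_{|\mu|\in[a,b]}\sum_j\|u_{0,\mu,j}\|_{H^{1/2}_r}^2\lesssim_n\|p_{a,b}u_0\|^2_{H^{1/2}(\R^n)}$. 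Taking square roots finishes the proof. As anticipated, the subtle ingredient is this norm comparison: it is where one uses $\K^{n-1}=\mathbb{S}^{n-1}$ — the explicit eigenvalue lattice, the resulting non-negativity of $H_\pm$ as a lower bound for the radial Laplacian, and the clean orthogonality/Littlewood-Paley structure — and it is why the right-hand side appears as a flat Sobolev norm; the per-mode local smoothing and the summation are routine once the lemmas of Sections \ref{secstr} and \ref{sec:strich} are in hand.
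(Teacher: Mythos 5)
Your proof is correct and follows essentially the same route as the paper: block-diagonalize over the angular eigenspaces $\mathcal H_{\mu,j}$, apply Proposition \ref{prop:locsmoothdirac} on each block with the constants $C_{c_\pm(\mu)}$ uniformly bounded via Lemma \ref{deltalemma}, use Lemma \ref{lemnorms} to trade the $\mathcal H^{1/2}_{c_+,c_-,m}$ norm for $\sqrt{|\mu|}\,\|\cdot\|_{H^{1/2}}\leq\sqrt{b}\,\|\cdot\|_{H^{1/2}}$, and sum by orthogonality. The only difference is that you make explicit the final comparison $\sum_{\mu,j}\|u_{0,\mu,j}\|_{H^{1/2}_r}^2\lesssim\|p_{a,b}u_0\|_{H^{1/2}(\R^n)}^2$ (via $H_{c_\pm}\geq-\partial_r^2-\tfrac{n-1}{r}\partial_r$ and operator monotonicity), a step the paper leaves implicit; this is a welcome clarification rather than a deviation.
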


\begin{proof} Let $u_{0,\mu}$ to be the orthogonal projection of $u_0$ over $L^2_r\otimes \mathcal H_\mu$ and $u_\mu = S_n(t)u_{0,\mu}$. Because the orthogonal projection over $L^2_r\otimes \mathcal H_\mu$ and $S_n(t)$ commute, we get
\[
\|r^{-1}p_{a,b}S_n(t)u_0\|_{L^2(\R\times \R^n)}^2 = \sum_{|\mu| \in [a,b]} \|r^{-1}u_\mu\|_{L^2(\R\times \R^n)}^2.
\]

From Proposition \ref{prop:locsmoothdirac}, we have 
\[
 \|r^{-1}u_\mu\|_{L^2(\R\times \R^n)} \leq 3(C_{c_+(\mu)} + C_{c_-(\mu)})\|u_{0,\mu}\|_{\mathcal H_{c_+(\mu),c_-(\mu),m}}
\]
where, by abuse of notation, we identified $u_{0,\mu}$ with
\[
\sum_{j} \frac{f_j}{\sqrt 2}  \begin{pmatrix}
(1+i\tilde \alpha^0) \psi_{\mu,j} \\0
\end{pmatrix}+ \frac{g_j}{\sqrt 2} \begin{pmatrix}
0 \\ (1-i\tilde \alpha^0) \psi_{\mu,j}
\end{pmatrix}
\]
where the (finite) family $(\psi_{\mu,j})_j$ is an orthonormal basis of the eigenspace of $\mathcal D_{\mathbb S^{n-1}}$ associated to $\mu$, and we identified $\|u_{0,\mu}\|_{\mathcal H_{c_+(\mu),c_-(\mu),m}}^2$ with
\[
\sum_j \big \| \begin{pmatrix}
f_j \\g_j
\end{pmatrix}\big\|_{\mathcal H_{c_+(\mu),c_-(\mu),m}}^2.
\]
From Lemma \ref{lemnorms}, we have for all $j$,
\[
\big \| \begin{pmatrix}
f_j \\g_j
\end{pmatrix}\big\|_{\mathcal H_{c_+(\mu),c_-(\mu),m}} \lesssim_{m,\varphi} \sqrt{|\mu|} \big \| \begin{pmatrix}
f_j \\g_j
\end{pmatrix}\big\|_{H^{1/2}(\R^n)}
\]
from which we deduce
\[
\|u_{0,\mu}\|_{\mathcal H_{c_+(\mu),c_-(\mu),m}} \lesssim_{m,\varphi} \sqrt{|\mu|} \|u_{0,\mu}\|_{H^{1/2}(\R^n)} \leq \sqrt b \|u_{0,\mu}\|_{H^{1/2}(\R^n)}.
\]
We conclude by using the fact that $C_{c_+(\mu)}$ and $C_{c_-(\mu)}$ are uniformly bounded in $\mu$.
\end{proof}

\subsection{Restricted Strichartz estimates in the asymptotically flat case}

In this subsection, we prove the following proposition. 

\begin{proposition}\label{prop:strichartrestricted} Let $0\leq a < b$ and let $m,p,q$  be admissible. We have, for all $u_0 \in H^{1/2}(\R^n)$ and all $\varepsilon >0$, 
\[
\|p_{a,b} S_n(t) u_0\|_{L^p(\R, W^{s,q}(\R^n))}\left \lbrace{\begin{array}{cc}
 \lesssim_{m,\varphi, n,\varepsilon,p,q} b^{5/p+\varepsilon} \|p_{a,b}u_0\|_{H^{1/2}} & \textrm{if } n=3,m=0\\
 \lesssim_{m,\varphi, n,p,q} b^{5/p} \|p_{a,b}u_0\|_{H^{1/2}} & \textrm{otherwise },\end{array}} \right.
\]
with $s = \frac1{q}-\frac1{p}$. 
\end{proposition}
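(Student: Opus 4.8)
The plan is to decompose $u_0$ along the eigenspaces $\mathcal H_\mu$ of $\mathcal D_{\mathbb S^{n-1}}$, apply the single-eigenspace Strichartz estimate \eqref{strichartzcor} (equivalently Proposition \ref{prop:strichartzDiracPerturbed} and its interpolated corollary) to each piece, and then reassemble. Writing $u_{0,\mu} = p_\mu u_0$ for the orthogonal projection onto $L^2_r \otimes \mathcal H_\mu$, the operator $\sigma_n^{-1}\mathcal D_\Sigma \sigma_n$ acts on $L^2_r\otimes\mathcal H_\mu$ as $h_{\mu,n}$, so $p_{a,b}S_n(t)u_0 = \sum_{|\mu|\in[a,b]} S_{V_\mu,n}(t)u_{0,\mu}$, with the sum running over the (finite set of) eigenvalues in the shell $[a,b]$ and, inside each, over an orthonormal basis of the finite-dimensional eigenspace. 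First I would fix $\mu$ in the spectrum of $\mathcal D_{\mathbb S^{n-1}}$, recall from Lemma \ref{deltalemma} that $\delta_\pm(\mu)$ is bounded below uniformly in $\mu$ (by $\tfrac14$ when $n\geq 5$, and by a positive constant depending only on $\mu_0,\varphi$ in general), hence the factor $((\delta_V^+)^{-1/2}+(\delta_V^-)^{-1/2})^{2/p+\varepsilon}$ in \eqref{strichartzcor} is $\lesssim_{m,\varphi,n} 1$, and conclude
\[
\|S_{V_\mu,n}(t)u_{0,\mu}\|_{L^p(\R,W^{s,q}_r)} \lesssim_{m,\varphi,n,p,q,\varepsilon} |\mu|^{5/p+\varepsilon}\|u_{0,\mu}\|_{H^{1/2}_r},
\]
with $\varepsilon=0$ unless $n=3,m=0$, exactly as in Proposition \ref{prop:strichartrestricted}.

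The main step is then the summation in $\mu$. The issue is that $L^p W^{s,q}$ is not a Hilbert norm (unless $p=q=2$), so one cannot simply add the squared norms; instead I would use almost-orthogonality together with the fact that, for fixed $\varphi$ satisfying \textbf{(A2)}, only finitely many eigenvalues lie in $[a,b]$, and the eigenspaces are mutually orthogonal in the angular variable. Concretely, expanding $u$ in the orthonormal angular basis and using Minkowski's inequality in the (finite) $\mu$-sum together with the triangle inequality, one gets
\[
\|p_{a,b}S_n(t)u_0\|_{L^pW^{s,q}} \leq \sum_{|\mu|\in[a,b]} \|S_{V_\mu,n}(t)u_{0,\mu}\|_{L^pW^{s,q}} \lesssim \sum_{|\mu|\in[a,b]} |\mu|^{5/p+\varepsilon}\|u_{0,\mu}\|_{H^{1/2}};
\]
bounding each $|\mu|^{5/p+\varepsilon}\leq b^{5/p+\varepsilon}$ and using Cauchy--Schwarz in the sum over the $O(b)$ (in fact $O(b^{n-1})$ counting multiplicity, but the relevant count of distinct $|\mu|$-shells and the Plancherel orthogonality of the $u_{0,\mu}$ is what matters) eigenvalues, together with $\sum_{|\mu|\in[a,b]}\|u_{0,\mu}\|_{H^{1/2}}^2 = \|p_{a,b}u_0\|_{H^{1/2}}^2$, yields the claimed bound $b^{5/p+\varepsilon}\|p_{a,b}u_0\|_{H^{1/2}}$. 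Alternatively, and more robustly, one invokes the vector-valued / Littlewood--Paley-type square function estimate on the sphere: the operator $S_n(t)$ commutes with the spherical projections, and a Rubio de Francia / Stein--Tomas-type inequality lets one pass from $\ell^2$-valued bounds on the pieces to the $L^p W^{s,q}$ bound on the sum with a loss that is polynomial in the number of frequencies, which is again $\lesssim b^{\text{const}/p}$ on the shell $[a,b]$; the power $5/p$ is then exactly what the single-mode estimate feeds in.

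The hard part will be making the summation over $\mu$ quantitatively clean while keeping the power of $b$ equal to $5/p$ (and not something worse coming from crudely counting multiplicities): this requires exploiting orthogonality of the angular pieces at the $L^2$ level and only paying the $W^{s,q}$ price mode-by-mode, i.e. using that $W^{s,q}(\R^n)$ on radial-times-spherical-harmonic functions behaves like the radial $W^{s,q}_r$ norm tensored with the $L^2$ angular norm up to the Sobolev-on-the-sphere weights, which is precisely why the hypothesis $\K^{n-1}=\mathbb S^{n-1}$ (with its explicit spectrum $\frac{n-1}2+\N$ and its well-developed Littlewood--Paley theory) is imposed here. Once that bookkeeping is in place, the estimate is a direct consequence of Proposition \ref{prop:strichartrestricted}'s single-mode input, Lemma \ref{deltalemma}, and Proposition \ref{prop:localsmoothaflat}'s summation scheme, and the massless $3d$ $\varepsilon$-loss is inherited verbatim from the interpolation in \eqref{strichartzcor}.
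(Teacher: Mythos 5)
There is a genuine gap at the step you yourself flag as ``the hard part'': the summation over $\mu$ inside the shell $[a,b]$ in the non-Hilbertian norm $L^p(\R,W^{s,q})$. Your triangle-inequality-plus-Cauchy--Schwarz scheme gives
\[
\sum_{|\mu|\in[a,b]}|\mu|^{5/p}\|u_{0,\mu}\|_{H^{1/2}} \;\leq\; b^{5/p}\,\bigl(\#\{\mu\}\bigr)^{1/2}\Bigl(\sum_{|\mu|\in[a,b]}\|u_{0,\mu}\|_{H^{1/2}}^2\Bigr)^{1/2},
\]
and the cardinality factor is at least $O(b^{1/2})$ for the distinct eigenvalues $\frac{n-1}{2}+\N$ in $[a,b]$ (and $O(b^{(n-1)/2})$ counting multiplicity, which you cannot ignore since the single-mode estimate is applied to each basis vector of each eigenspace). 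This destroys the exponent $5/p$. The fallback you sketch --- a Rubio de Francia / square-function estimate resolving \emph{individual} Dirac eigenvalues within one dyadic block --- is not available: Littlewood--Paley theory on the sphere gives square functions for dyadic frequency blocks, not for single eigenvalues, and in any case a ``polynomial loss in the number of frequencies'' would again spoil the power of $b$. So the proposal as written does not close.

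The paper's proof avoids mode-by-mode Strichartz summation entirely. It first reduces by interpolation to proving the bound with constant $b^{5/2}$ at the $L^2$-in-time admissible endpoint, then writes $\sigma_n^{-1}\mathcal D_\Sigma\sigma_n=\mathcal D_{\R^n}+\mathcal V$ with $\mathcal V=(\frac1\varphi-\frac1r)\bigl(\begin{smallmatrix}0&\mathcal D_{\mathbb S^{n-1}}\\ \mathcal D_{\mathbb S^{n-1}}&0\end{smallmatrix}\bigr)$, squares to get a flat Klein--Gordon equation perturbed by $\mathcal W=\{\mathcal V,\mathcal D_{\R^n}\}+\mathcal V^2$, and runs the Rodnianski--Schlag/Duhamel argument \emph{once on the whole shell-projected solution} $p_{a,b}u$: the free Strichartz estimate is invoked only for the unperturbed flow, and the perturbation is controlled by $\|p_{a,b}r\mathcal W r p_{a,b}\|_{L^2\to L^2}\cdot\|r^{-1}p_{a,b}u\|_{L^2_{t,x}}\lesssim b^2\cdot b^{1/2}\|p_{a,b}u_0\|_{H^{1/2}}$, where the $b^{1/2}$ comes from the local smoothing estimate of Proposition~\ref{prop:localsmoothaflat}. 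The decisive point is that orthogonality over $\mu$ is exploited only in the $L^2_{t,x}$ local smoothing norm, where squares add with no loss; the $L^pW^{s,q}$ norm is never decomposed over individual modes. If you want to salvage your approach you would have to replace the single-mode Strichartz input by precisely such a shell-level perturbative argument.
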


\begin{proof} We prove that 
\[
\|p_{a,b} S_n(t) u_0\|_{L^p(\R, W^{s,q}(\R^n))} \lesssim_{m,\varphi, n,} b^{5/2} \|p_{a,b}u_0\|_{H^{1/2}}
\]
for all admissible triplets $(m,p,q)$ and conclude by interpolation.

First, we have 
\[
\sigma_n^{-1} \mathcal D_\Sigma \sigma_n = \mathcal D_{\R^n} + \mathcal V
\]
with $\mathcal V$ the operator
\[
\mathcal V = \Big(\frac1{\varphi} - \frac1{r}\Big) \begin{pmatrix}
0 & \mathcal D_{\mathbb S^{n-1}} \\ \mathcal D_{\mathbb S^{n-1}} & 0
\end{pmatrix}.
\]

Writing $u  = S_n(t) u_0$, we get that $u$ satisfies 
\[
\partial_t^2  u + (\mathcal D_{\R^n} + \mathcal V)^2 u = 0
\]
with initial data $u(t=0) = u_0$ and $\partial_t u(t=0) =: u_1 = -i(\mathcal D_{\R^n} + \mathcal V)u_0$.

We have 
\[
(\mathcal D_{\R^n} + \mathcal V)^2 = \mathcal D_{\R^n}^2 + \mathcal W = m^2 - \lap_{\R^n} +\mathcal W
\]
with
\[
\mathcal W = \{\mathcal V,\mathcal D_{\R^n}\} + \mathcal V^2.
\]

By the Rodnianski-Schlag argument that we previously used, we get
\[
\|p_{a,b} u\|_{L^p,W^{s,q}(\R^n)} \lesssim_{n,p,q} \|p_{a,b}u_0\|_{H^{1/2}(\R^n)} + \|p_{a,b}u_1\|_{H^{-1/2}(\R^n)} + \|r\mathcal W p_{a,b} u\|_{L^2(\R^{n+1})}.
\]

By the commutativity of $p_{a,b}$ and $\mathcal D_{\R^n} + \mathcal V$ we get
\[
\|p_{a,b}u_1\|_{H^{-1/2}} = \|(\mathcal D_{\R^n} + \mathcal V)p_{a,b} u_0\|_{H^{-1/2}}
\]
and since $r(\frac1{\varphi} - \frac1{r})$ is bounded, by Hardy's inequality, we get
\[
\|p_{a,b}u_1\|_{H^{-1/2}} \lesssim_{n,\varphi}\|p_{a,b} u_0\|_{H^{1/2}}.
\]

For the other part, we use that
\[
\|r\mathcal W p_{a,b} u\|_{L^2(\R^{n+1})} \leq \|p_{a,b}r\mathcal W r p_{a,b}\|_{L^2\rightarrow L^2}\|r^{-1}p_{a,b}u\|_{L^2(\R^{n+1})}.
\]
It remains to use Proposition \ref{prop:localsmoothaflat} and prove that $p_{a,b}r\mathcal W r p_{a,b}$ is a bounded operator from $L^2(\R^{n+1})$ to itself and compute the dependence of its norm in $a,b$ to conclude.

Because the multliplication by a radial function and the Dirac operator on the sphere commute, we get that
\[
p_{a,b}r\mathcal V^2 r p_{a,b} = \Big(\frac{r}{\varphi}-1\Big)^2\begin{pmatrix}
p_{a,b}\mathcal D_{\mathbb S^{n-1}}^2 p_{a,b} & 0 \\
0 & p_{a,b}\mathcal D_{\mathbb S^{n-1}}^2 p_{a,b}\end{pmatrix}
\]
and we deduce 
\[
\| p_{a,b}r\mathcal V^2 r p_{a,b}\|_{L^2\rightarrow L^2} \leq \|\Big(\frac{r}{\varphi}-1\Big)^2\|_\infty b^2
\]
which is finite because of the assumptions on $\varphi$. 

What is more, we have 
\[
\mathcal D_{\R^n} = \begin{pmatrix}
m & i\tilde \alpha^0  \Big( \partial_r + \frac{n-1}{2r}\Big) + \frac1{r}\mathcal D_{\mathbb S^{n-1}} \\
i\tilde \alpha^0  \Big( \partial_r + \frac{n-1}{2r}\Big) + \frac1{r}\mathcal D_{\mathbb S^{n-1}} & -m
\end{pmatrix}
\]
We deduce 
\[
\{ \mathcal D_{\R^n}, \mathcal V\} =
\begin{pmatrix}
\mathcal L  & 0 \\ 0 & \mathcal L \end{pmatrix}
\]
with 
\[
\mathcal L = 
\{ i\tilde \alpha^0 \Big( \partial_r + \frac{n-1}{2r}\Big)+ \frac1{r}\mathcal D_{\mathbb S^{n-1}},\Big( \frac1{\varphi} - \frac1{r}\Big)\mathcal D_{\mathbb S^{n-1}}\} 
\]
We have that $i\tilde \alpha^0$ and $\mathcal D_{\mathbb S^{n-1}}$ anticommute, that $\partial_r$ and  $\mathcal D_{\mathbb S^{n-1}}$ commute, and that the multiplication by a radial function commutes with $\mathcal D_{\mathbb S^{n-1}}$. Hence we get
\[ 
\mathcal L = i\tilde \alpha^0 \mathcal D_{\mathbb S^{n-1}}\Big[\partial_r + \frac{n-1}{2r}, \frac1{\varphi} - \frac1{r}\Big] + 2 \Big( \frac1{\varphi} - \frac1{r}\Big)\frac1{r} \mathcal D_{\mathbb S^{n-1}}^2.
\]
We deduce 
\[
p_{a,b}r\{\mathcal V, \mathcal D_{\R^n}\}rp_{a,b} =\begin{pmatrix} \mathcal L_{a,b} & 0 \\ 0 & \mathcal L_{a,b}\end{pmatrix}
\]
with
\[
\mathcal L_{a,b} = ip_{a,b} \tilde \alpha^0\mathcal D_{\mathbb S^{n-1}}p_{a,b}r^2 \partial_r \Big( \frac1{\varphi} - \frac1{r}\Big) + 2 r \Big( \frac1{\varphi} - \frac1{r}\Big)p_{a,b}\mathcal D_{\mathbb S^{n-1}}^2p_{a,b}.
\]
Because 
\[
r^2 \partial_r \Big( \frac1{\varphi} - \frac1{r}\Big) =  \frac{\varphi_1}{1+\varphi_1}- \frac{r\varphi_1'}{(1+\varphi_1)^2}
\]
belongs to $L^\infty$, and so does $r\Big( \frac1{\varphi} - \frac1{r}\Big) = \frac1{1+\varphi_1} - 1$, we get
\[
\|p_{a,b}r\{\mathcal V, \mathcal D_{\R^n}\}rp_{a,b}\|_{L^2\rightarrow L^2} \lesssim_{\varphi} b^2.
\]
This concludes the proof.
\end{proof}

\subsection{Set-up for the Littlewood-Paley argument}

In this subsection, we draw a link between the spherical harmonics and the eigenfunctions of the Dirac operator on the sphere.

\begin{proposition}\label{prop:LPsetup}
Let $\pi_j$ be the orthogonal projection on $\mathcal S_j \otimes L^2_r \otimes \C^M$ where $\mathcal S_j$ are the spherical harmonics of degree in $[2^j,2^{j+1})$ and let $u \in L^2(\R^n,\C^M)$, we have 
\[
\pi_j u = \pi_j p_{a_j,b_j} u
\]
with $a_j = \frac{n-1}{2} + 2^j - 1$ and $b_j = \frac{n-1}2 + 2^{j+1}$.
\end{proposition}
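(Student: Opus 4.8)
The plan is to reduce the statement to a purely angular fact about the two natural orthogonal decompositions of $L^2(\R^n,\C^M)$: the one into scalar spherical harmonic degrees (which, tensored with $L^2_r$ and $\C^M$, produces the ranges of the $\pi_j$), and the Dirac decomposition $\bigoplus_\mu \mathcal H_\mu$ of Section~\ref{sepvar} (whose partial sums over $|\mu|\in[a,b]$ are the ranges of the $p_{a,b}$). Throughout one works in a fixed frame on $\mathbb S^{n-1}$, so that the components of a spinor on the sphere may be expanded into scalar spherical harmonics.

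First I would recall that the spectrum of $\mathcal D_{\mathbb S^{n-1}}$ is $\{\pm(\tfrac{n-1}2+k):k\in\N\}$, so that any $\mu$ occurring in the decomposition satisfies $|\mu|=\tfrac{n-1}2+k$ for some $k$. The heart of the matter is the classical correspondence between the eigenspaces of $\mathcal D_{\mathbb S^{n-1}}$ and spherical harmonic degrees: for $|\mu|=\tfrac{n-1}2+k$, every element of $\mathcal H_\mu$ has, componentwise, a spherical harmonic expansion involving only the two degrees $k$ and $k+1$. For $n=3$ this is the explicit computation recalled in \cite{cacdescurv1}, and in general it follows from the standard construction of the eigenspinors on spheres (e.g.\ \cite{camporesi}); it is moreover corroborated by Remark~\ref{rem:selfadjointness}, where the $\mathcal D^2$-eigenspinors sitting in $\mathcal H_\mu$ are shown to have radial profiles $r^{(2-n)/2}J_{\pm\nu^+}(\rho r)$ with $\nu^+\in\{\tfrac{|2\mu-1|}2,\tfrac{|2\mu+1|}2\}=\{|\mu|-\tfrac12,|\mu|+\tfrac12\}$ — only the regular branch $J_{+\nu^+}$ surviving under the standing assumption $\mu_0>\tfrac12$ — and such a profile couples on $\R^n$ to a degree-$\ell$ scalar spherical harmonic exactly when $\nu^+=\ell+\tfrac{n-2}2$, i.e.\ $\ell\in\{k,k+1\}$.

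Granting this, the remainder is bookkeeping, where I expect no difficulty. Since the elements of $\mathcal H_\mu$ (with $|\mu|=\tfrac{n-1}2+k$) involve only harmonic degrees $k$ and $k+1$, the subspace $\mathcal H_\mu\otimes L^2_r\subset L^2(\R^n,\C^M)$ is orthogonal to every function whose angular part is a spherical harmonic of degree $\ell\notin\{k,k+1\}$. Now if $|\mu|\notin[a_j,b_j]=[\tfrac{n-1}2+2^j-1,\ \tfrac{n-1}2+2^{j+1}]$, then $k=|\mu|-\tfrac{n-1}2$ satisfies $k\leq 2^j-2$ or $k\geq 2^{j+1}+1$, and in either case $\{k,k+1\}\cap[2^j,2^{j+1})=\emptyset$; hence $\mathcal H_\mu\otimes L^2_r$ is orthogonal to the range of $\pi_j$. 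Summing over such $\mu$,
\[
\mathrm{Range}\,(1-p_{a_j,b_j})=\bigoplus_{|\mu|\notin[a_j,b_j]}\mathcal H_\mu\otimes L^2_r\ \subseteq\ \ker\pi_j ,
\]
so $\pi_j(1-p_{a_j,b_j})=0$, that is $\pi_j=\pi_j\,p_{a_j,b_j}$; evaluating this operator identity at $u$ yields $\pi_j u=\pi_j p_{a_j,b_j}u$.

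The only genuinely non-formal ingredient is the spectral correspondence of the second paragraph (Dirac eigenvalue of modulus $\tfrac{n-1}2+k$ $\longleftrightarrow$ harmonic degrees $k$ and $k+1$); the main obstacle, such as it is, lies in stating and referencing it cleanly, or in extracting it from the Bessel analysis already present in Remark~\ref{rem:selfadjointness}. Once it is available, the proposition follows from the orthogonality observation above together with the elementary arithmetic on the dyadic blocks.
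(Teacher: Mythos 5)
Your argument is correct and follows the same route as the paper: the key input is exactly the inclusion $\mathcal H_\mu\subseteq(\mathcal S_{|\mu|-\frac{n-1}{2}}\oplus\mathcal S_{|\mu|-\frac{n-1}{2}+1})\otimes\C^M$, which the paper isolates as Lemma \ref{lem:LPsetup}, and the dyadic bookkeeping that follows is identical. The only difference is that you invoke this correspondence as classical (or extract it from the Bessel analysis of Remark \ref{rem:selfadjointness}), whereas the paper proves it directly by computing $\mathcal D_{\R^n}\Psi_\mu^{\pm}$ and reading off the spherical harmonic degree from the eigenvalue of $-\lap_{\R^n}=\mathcal D_{\R^n}^2$ on $\Psi_\mu^{\pm}$.
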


Before proving this proposition, we prove the following short lemma.
\begin{lemma}\label{lem:LPsetup} We have for all $\mu$ in the spectrum of the Dirac operator on the sphere $\mathbb S^{n-1}$,
\[
\mathcal H_\mu \subseteq (\mathcal S_{|\mu|-\frac{n-1}{2}} \oplus \mathcal S_{|\mu| - \frac{n-1}{2} + 1} ) \otimes \C^M.
\]
\end{lemma}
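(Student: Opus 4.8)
The plan is to descend to the warped product with $\varphi=r$, i.e.\ to $\mathbb R^n=\mathbb R^+_r\times\mathbb S^{n-1}$, where the spinor bundle is trivial and the ``degree'' of a spherical harmonic is meaningful componentwise, and to read off the result from separation of variables for $\mathcal D_{\mathbb R^n}^2$. We may assume $\mu>0$: if $\mu<0$ we write an eigenspinor $\psi_\mu$ for the eigenvalue $\mu$ as $\psi_\mu=i\tilde\alpha^0\phi$ with $\mathcal D_{\mathbb S^{n-1}}\phi=|\mu|\phi$, and then $(1\pm i\tilde\alpha^0)\psi_\mu=\mp(1\mp i\tilde\alpha^0)\phi$, so the two generators of $\mathcal H_\mu$ are, up to signs and an exchange of the two $\C^M$-blocks, the generators of the $|\mu|$ case; hence the $\mu<0$ statement follows from the $\mu>0$ one, whose right-hand side is unchanged.

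So fix $\mu>0$ in the spectrum of $\mathcal D_{\mathbb S^{n-1}}$, take $\psi_\mu$ with $\mathcal D_{\mathbb S^{n-1}}\psi_\mu=\mu\psi_\mu$, and set $\psi_{-\mu}=i\tilde\alpha^0\psi_\mu$, $\xi_\pm=(1\pm i\tilde\alpha^0)\psi_\mu=\psi_\mu\pm\psi_{-\mu}$. Using that $\tilde\alpha^0$ anticommutes with $\mathcal D_{\mathbb S^{n-1}}$ and $(i\tilde\alpha^0)^2=-1$, one checks that $\psi_{-\mu}$ is an eigenspinor with eigenvalue $-\mu$ and that
\[
i\tilde\alpha^0\,\mathcal D_{\mathbb S^{n-1}}\,\xi_\pm=\pm\mu\,\xi_\pm,\qquad \mathcal D_{\mathbb S^{n-1}}^2\,\xi_\pm=\mu^2\,\xi_\pm,
\]
and that $\xi_\pm\neq0$ since $1\pm i\tilde\alpha^0$ is invertible (its square is $\pm2i\tilde\alpha^0$). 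Next I would record the operator identity on $\C^M$-valued functions on $\mathbb S^{n-1}$,
\[
-\Delta_{\mathbb S^{n-1}}=\mathcal D_{\mathbb S^{n-1}}^2-i\tilde\alpha^0\,\mathcal D_{\mathbb S^{n-1}}-\tfrac{(n-1)(n-3)}{4},
\]
where $\Delta_{\mathbb S^{n-1}}$ denotes the scalar Laplace--Beltrami operator acting componentwise. This is nothing but the angular part of $\mathcal D_{\mathbb R^n}^2=m^2-\Delta_{\mathbb R^n}$ written in polar coordinates via \eqref{ridir} with $\varphi=r$ (equivalently, it is the computation underlying the ODEs in Remark \ref{rem:selfadjointness}), so it can simply be quoted.

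Combining the last two displays, $\xi_+$ is an eigenfunction of the componentwise operator $-\Delta_{\mathbb S^{n-1}}$ with eigenvalue $\mu^2-\mu-\tfrac{(n-1)(n-3)}{4}=(\mu-\tfrac12)^2-\tfrac{(n-2)^2}{4}$, and $\xi_-$ with eigenvalue $(\mu+\tfrac12)^2-\tfrac{(n-2)^2}{4}$. The spectrum of $-\Delta_{\mathbb S^{n-1}}$ on $\C^M$-valued functions is $\{\ell(\ell+n-2):\ell\in\N\}$ with $\ell$-eigenspace $\mathcal S_\ell\otimes\C^M$, and $\ell(\ell+n-2)=(\ell+\tfrac{n-2}{2})^2-\tfrac{(n-2)^2}{4}$; since $\mu\geq\tfrac{n-1}{2}$ gives $\mu\mp\tfrac12\geq\tfrac{n-3}{2}\geq0$, the only matches are $\ell=\mu-\tfrac{n-1}{2}$ for $\xi_+$ and $\ell=\mu-\tfrac{n-1}{2}+1$ for $\xi_-$ (in particular $\mu-\tfrac{n-1}{2}\in\N$ is re-proved). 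Hence $\xi_+\in\mathcal S_{|\mu|-\frac{n-1}{2}}\otimes\C^M$ and $\xi_-\in\mathcal S_{|\mu|-\frac{n-1}{2}+1}\otimes\C^M$; placing $\xi_+$ in the upper and $\xi_-$ in the lower block and letting $\psi_\mu$ run over the $\mu$-eigenspace yields $\mathcal H_\mu\subseteq(\mathcal S_{|\mu|-\frac{n-1}{2}}\oplus\mathcal S_{|\mu|-\frac{n-1}{2}+1})\otimes\C^M$.

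The only genuinely delicate point is the operator identity relating the \emph{componentwise} Euclidean Laplacian on $\mathbb S^{n-1}$ to $\mathcal D_{\mathbb S^{n-1}}$ and $i\tilde\alpha^0$ — one must be sure that this is the scalar operator whose eigenspaces are the $\mathcal S_\ell\otimes\C^M$ — but since it is exactly the angular computation already performed in Remark \ref{rem:selfadjointness}, the remainder of the argument is routine bookkeeping (the sign juggling for $\mu<0$ and the matching of each $\C^M$-block to its degree being the only other mild subtleties).
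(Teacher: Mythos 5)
Your proof is correct and follows essentially the same route as the paper: both arguments square a Dirac-type operator to land on the componentwise scalar Laplacian acting on $(1\pm i\tilde\alpha^0)\psi_\mu$ and then match the resulting eigenvalue $\mu^2\mp\mu-\frac{(n-1)(n-3)}{4}$ against $\ell(\ell+n-2)$; the paper phrases this by applying $\mathcal D_{\R^n}$ twice to the $r$-independent spinors $\Psi_\mu^\pm$, while you use the equivalent intrinsic identity on $\mathbb S^{n-1}$ together with an explicit reduction to $\mu>0$. The ``delicate point'' you flag (identifying the componentwise operator whose eigenspaces are $\mathcal S_\ell\otimes\C^M$) is treated with the same implicit frame identification in the paper's own proof, so no gap relative to it.
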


\begin{proof} Let $\psi_\mu$ be an eigenfunction of $\mathcal D_{\mathbb S^{n-1}}$ with eigenvalue $\mu$ and write 
\[
\Psi_\mu^+ = \begin{pmatrix} (1+i\tilde \alpha^0)\psi_\mu \\ 0 \end{pmatrix},\quad \Psi_\mu^-= \begin{pmatrix}
0 \\ (1-i\tilde \alpha^0) \psi_\mu 
\end{pmatrix}.
\]
We have 
\[
\mathcal D_{\R^n}\Psi_\mu^+ = \Big(\mu- \frac{n-1}{2}\Big) \frac1{r} \Psi_\mu^-
\]
and thus
\[
-\lap_{\R^n}\Psi_\mu^+ = \mathcal D_{\R^n}^2 \Psi_\mu^+ = \mathcal D_{\R^n} \Big(\mu- \frac{n-1}{2}\Big) \frac1{r} \Psi_\mu^- = \Big( \mu - \frac{n-1}{2}\Big)\Big( \mu + \frac{n-1}{2}-1\Big) \frac1{r^2} \Psi_\mu^+.
\]
Because $\Psi_\mu^+$ does not depend on $r$, we deduce that it is a spherical harmonics of degree $\mu -\frac{n-1}{2}$ if $\mu >0$ and $-\mu-\frac{n-1}{2}+1$ otherwise. 

The same type of computation yields 
\[
-\lap_{\R^n} \Psi_\mu^- = \Big( \mu+ \frac{n-1}{2}\Big) \Big(\mu -\frac{n-1}{2}+1\Big)\frac1{r^2} \Psi_\mu^-
\] 
hence $\Psi_\mu^-$ is a spherical harmonics of degree $\mu - \frac{n-1}{2}+1$ if $\mu >0$ and $-\mu -\frac{n-1}{2}$ otherwise.

In other words
\[
\mathcal H_\mu \subseteq  (\mathcal S_{|\mu|-\frac{n-1}{2}} \oplus \mathcal S_{|\mu| - \frac{n-1}{2} + 1} ) \otimes \C^M.
\]
\end{proof}

\begin{proof}[Proof of Proposition \ref{prop:LPsetup}.] We have 
\[
\pi_j u = \sum_\mu \pi_j u_\mu
\]
where $u_\mu$ is the orthogonal projection of $u$ over $\mathcal H_\mu \otimes L^2_r$. If $|\mu| > b_j$, then
\[
|\mu| - \frac{n-1}2 > 2^{j+1}
\]
hence $u_\mu $ is a combination of spherical harmonics of degree higher than $2^{j+1}$ hence $\pi_j u_\mu =0$. 

If $|\mu| < a_j$ then 
\[
|\mu| - \frac{n-1}{2}+1 < 2^j
\]
hence $u_\mu$ is a combination of spherical harmonics of degree lesser than $2^j$, we have $\pi_j u_\mu = 0$ therefore 
\[
\pi_j u = \sum_{|\mu| \in [a_j,b_j]}\pi_j u_\mu = \pi_j p_{a_j,b_j} u.
\]
\end{proof}

\subsection{Proof of Theorem \ref{teo2}.} 

As done in \cite{cacdescurv1}, by relying on Littlewood-Paley theory on the sphere we are able to prove Strichartz estimates for the Dirac equation with general initial conditions in the setting of spherically symmetric manifolds. As the proof is very similar, we omit some details.

\begin{proposition}\label{prop:LPfirststep} Let $m,p,q$ be admissible. Let $a,b>0$ be such that
\[
\frac1{2a} + \frac5{pb} \leq 1
\]
if $m\neq 0$ or $n\neq 3$ and 
\[
\frac1{2a} + \frac5{pb}<1
\]
otherwise. We have for all $u_0 \in H^{a,b}(\R^n)$,
\[
\|S_n(t)u_0\|_{L^p(\R,W^{s,q}(\R^n))} \lesssim_{n,\varphi,m,p,q,a,b} \|u_0\|_{H^{a,b}}.
\]
\end{proposition}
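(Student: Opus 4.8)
The plan is to reduce the estimate for general data to the restricted estimate of Proposition~\ref{prop:strichartrestricted} via the spherical Littlewood--Paley decomposition on $\mathbb S^{n-1}$, losing exactly the right number of angular derivatives. First I would write $u_0 = \sum_{j\geq 0} \pi_j u_0$ (plus a finite-dimensional low-frequency piece, which is harmless), so that by almost-orthogonality and the commutation of $S_n(t)$ with $\pi_j$,
\[
S_n(t) u_0 = \sum_j \pi_j S_n(t) u_0 = \sum_j \pi_j p_{a_j,b_j} S_n(t) \pi_j u_0,
\]
where the last identity uses Proposition~\ref{prop:LPsetup}. Since $a_j \sim b_j \sim 2^j$, Proposition~\ref{prop:strichartrestricted} gives, for each dyadic block,
\[
\|\pi_j S_n(t) u_0\|_{L^p(\R,W^{s,q}(\R^n))} \lesssim 2^{j(5/p+\varepsilon)} \|p_{a_j,b_j}\pi_j u_0\|_{H^{1/2}} \lesssim 2^{j(5/p+\varepsilon)} \|\pi_j u_0\|_{H^{1/2}}
\]
with $\varepsilon>0$ when $m=0,n=3$ and $\varepsilon=0$ otherwise (boundedness of $\pi_j$ and $p_{a_j,b_j}$ on $H^{1/2}$, and the fact that $\pi_j u_0$ already carries the projection onto $|\mu|\in[a_j,b_j]$).

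Next I would invoke the Littlewood--Paley square-function estimate on the sphere (i.e.\ that for $1<q<\infty$ the vector-valued projections $(\pi_j)$ form an unconditional decomposition of $W^{s,q}(\R^n)$ in the angular variable; see the discussion in \cite{cacdescurv1}), which allows one to pass from the sum of the norms of the pieces to the norm of the sum at the cost of an $\ell^2_j$ Besov-type bound. Concretely, using Minkowski's inequality in $L^p(\R,W^{s,q})$ followed by the spherical square function, one obtains
\[
\|S_n(t)u_0\|_{L^p(\R,W^{s,q})} \lesssim \Big(\sum_j \|\pi_j S_n(t)u_0\|_{L^p(\R,W^{s,q})}^2\Big)^{1/2} \lesssim \Big(\sum_j 2^{2j(5/p+\varepsilon)}\|\pi_j u_0\|_{H^{1/2}}^2\Big)^{1/2};
\]
here one also uses $p\geq 2$ so that $\ell^p_j \hookrightarrow \ell^2_j$ goes the favourable way, or more carefully interpolates. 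Finally, by Cauchy--Schwarz in $j$ one trades the geometric weight $2^{2j(5/p+\varepsilon)}$ against angular smoothing: splitting $2^{j(5/p+\varepsilon)}\|\pi_j u_0\|_{H^{1/2}} = \big(2^{j(5/p+\varepsilon)} 2^{-jb/\theta}\big)\big(2^{jb/\theta}\|\pi_j u_0\|_{H^{1/2}}\big)$ and similarly balancing the $H^a$ regularity, one checks that the first factors are summable in $\ell^2_j$ precisely under the stated condition $\frac1{2a} + \frac5{pb}\leq 1$ (strict if $\varepsilon>0$ is forced), while the second factors reassemble to a bound by $\|u_0\|_{H^{a,b}} \sim \big(\|u_0\|_{H^a}^2 + \|(-\Delta_{\mathbb S^{n-1}})^{b/2}u_0\|_{L^2}^2\big)^{1/2}$; this is the same Hölder-in-$j$ bookkeeping as in the proof of the analogous statement in \cite{cacdescurv1}.

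The main obstacle is the correct handling of the $1/2$ order of radial regularity appearing on the right-hand side of each dyadic estimate versus the $H^a$, $H^{a,b}$ norms on the final right-hand side: one must verify that $\|\pi_j u_0\|_{H^{1/2}(\R^n)}$ is controlled by a combination of a fractional power of $2^j$ (angular frequency) times $\|\pi_j u_0\|_{L^2}$ plus the genuine radial $H^{1/2}$ content, and then that summing these against the geometric loss $2^{j(5/p+\varepsilon)}$ under the hypothesis on $(a,b)$ indeed closes. This is where the precise exponent $5/p$ (rather than $5/2$) matters and why the borderline case $m=0$, $n=3$ needs the strict inequality — the $\varepsilon$ loss in Proposition~\ref{prop:strichartrestricted} eats exactly the endpoint. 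Once Proposition~\ref{prop:LPfirststep} is in hand, Theorem~\ref{teo2} follows by transferring from $S_n(t)$ (the flow of $\sigma_n^{-1}\mathcal D_\Sigma \sigma_n$) back to the Dirac flow on $\Sigma$ using $e^{-it\mathcal D_\Sigma} = \sigma_n e^{-it\sigma_n^{-1}\mathcal D_\Sigma\sigma_n}\sigma_n^{-1}$ together with Lemma~\ref{lemsigma} and the weight identity relating $\|\sigma_n^{1-2/q}\,\cdot\,\|_{W^{s,q}_\varphi}$ to the stated norm $\big\|(\varphi/r)^{\frac{n-1}2(1-2/q)}\,\cdot\,\big\|_{W^{s,q}(\Sigma)}$, exactly as in the passage from Proposition~\ref{prop:continuityandconstants} to Theorem~\ref{teo1}.
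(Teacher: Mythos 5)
Your skeleton is the paper's: spherical Littlewood--Paley decomposition, Proposition \ref{prop:LPsetup} to replace $\pi_j$ by $\pi_j p_{a_j,b_j}$, the square-function inequality to reduce matters to $\sum_j\|\pi_j p_{a_j,b_j}S_n(t)u_0\|_{L^p W^{s,q}}^2$, and Proposition \ref{prop:strichartrestricted} applied on each block with $b_j\sim 2^j$. Up to the (unjustified but harmless) substitution of $\|\pi_j u_0\|_{H^{1/2}}$ for $\|p_{a_j,b_j}u_0\|_{H^{1/2}}$ --- the paper keeps the latter and uses the finite overlap of the intervals $[a_j,b_j]$ when summing --- this all matches.

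The gap is in the final summation, which you flag as the main obstacle but do not resolve, and the mechanism you sketch would not close it. Extracting a factor $2^{j(5/p+\varepsilon-b/\theta)}$ and requiring it to be (square-)summable in $j$ forces a strict inequality, so it cannot reach the closed condition $\frac1{2a}+\frac5{pb}\le 1$; moreover it leaves you with $\sum_j 2^{2jb/\theta}\|\pi_j u_0\|_{H^{1/2}}^2$, which is not yet $\|u_0\|_{H^{a,b}}^2$, since the radial $H^{1/2}$ content still has to be traded against the angular weight. The paper's device is pointwise in $j$ and needs no decay at all: interpolating $\|v\|_{H^{1/2}}^2\le\big(\|v\|_{L^2}^2\big)^{1-1/c}\big(\|v\|_{H^{c/2}}^2\big)^{1/c}$ and applying the elementary inequality $xy\le x^c+y^d$ (valid for $x,y\ge 1$ whenever $\frac1c+\frac1d\le1$) yields
\[
b_j^{10/p+\varepsilon}\,\|p_{a_j,b_j}u_0\|_{H^{1/2}}^2\;\lesssim\; b_j^{(10/p+\varepsilon)d}\,\|p_{a_j,b_j}u_0\|_{L^2}^2+\|p_{a_j,b_j}u_0\|_{H^{c/2}}^2 .
\]
Summing over $j$, the two resulting series are comparable to $\|(-\lap_{\mathbb S^{n-1}})^{b/2}u_0\|_{L^2}^2$ with $b=(5/p+\varepsilon/2)d$ and to $\|u_0\|_{H^{c/2}}^2$ with $a=c/2$, and the constraint $\frac1c+\frac1d\le1$ is exactly $\frac1{2a}+\frac{5/p+\varepsilon/2}{b}\le1$, which produces the closed condition when $\varepsilon=0$ and the open one when $m=0$, $n=3$. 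Inserting this Young-type splitting at fixed $j$ is the missing step; with it, your argument coincides with the paper's.
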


\begin{proof} We have by the Littlewood-Paley theory ($q \in [2,\infty)$)
\[
\|S_n(t)u_0\|_{L^p(\R,W^{s,q}(\R^n))}^2 \lesssim \sum_j \|\pi_j S_n(t)u_0\|_{L^p(\R,W^{s,q}(\R^n))}^2 .
\]
 By Proposition \ref{prop:LPsetup}, we have 
\[
\|\pi_j S_n(t)u_0\|_{L^p(\R,W^{s,q}(\R^n))}^2  = \|\pi_j p_{a_j,b_j} S_n(t)u_0\|_{L^p(\R,W^{s,q}(\R^n))}^2 .
\]
Again by Littlewood-Paley theory, we have 
\[
\|\pi_j p_{a_j,b_j} S_n(t)u_0\|_{L^p(\R,W^{s,q}(\R^n))}^2\lesssim \|p_{a_j,b_j} S_n(t) u_0\|_{L^p(\R,W^{s,q}(\R^n))}^2.
\]

We apply Proposition \ref{prop:strichartrestricted}, we get
\[
\|\pi_j p_{a_j,b_j} S_n(t)u_0\|_{L^p(\R,W^{s,q}(\R^n))}^2\lesssim b_j^{10/p + \varepsilon} \|p_{a_j,b_j}u_0\|_{H^{1/2}}^2
\]
with $\varepsilon >0$ if $m=0$ and $n=3$ (and  $0$ otherwise). From the inequality
\[
xy \leq x^c + y^d
\]
for any $x,y\in [1,\infty)$ and $\frac1{c}+ \frac1{d} \leq 1$, we deduce
\[
\|\pi_j p_{a_j,b_j} S_n(t)u_0\|_{L^p(\R,W^{s,q}(\R^n))}^2\lesssim b_j^{(10/p + \varepsilon)d} \|p_{a_j,b_j}u_0\|_{L^2}^2 + \|p_{aj,b_j}u_0\|_{H^{c/2}}^2.
\]
Because $[a_j,b_j]$ is localized around $2^j$, we get
\[
\|S_n(t)u_0\|_{L^p(\R,W^{s,q}(\R^n))} \lesssim \|u_0\|_{H^{c/2, (5/p + \varepsilon/2)d}}.
\]
Setting $a = c/2$ and $b = (5/p + \varepsilon/2)d$, the condition on $c$ and $d$ becomes
\[
\frac1{2a} + \frac{5/p + \varepsilon/2}{b} \leq 1
\]
which is equivalent to the hypothesis of Proposition \ref{prop:LPfirststep} by discussing the possible values of $\varepsilon$.
\end{proof}

We now extend Lemmas \ref{lem:normHs} and \ref{lemsigma} to include the angular dependence. 

\begin{lemma}\label{lem:normHsbis} The multiplication by $\sigma_n$ is an isometry from $L^2(\R^n)$ to $L^2(\Sigma)$. The multiplication by $\sigma_n$ is an isomorphism from $H^1 (\R^n)$ to $H^1(\Sigma)$, the immediate consequence of which that for all $a\in [0,1]$, $b\in \R$, the multiplication by $\sigma_n$ is an isomorphism from $H^{a,b}(\R^n)$ to $H^{a,b}(\Sigma)$.
\end{lemma}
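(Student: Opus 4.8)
The plan is to re-run the proofs of Lemmas~\ref{lem:normHs} and \ref{lemsigma}, now retaining the angular variable, using repeatedly that $\sigma_n$ is a \emph{radial scalar} multiplier and hence commutes with every operator acting only in the angular variable. The $L^2$-statement is immediate: in polar coordinates $(r,\omega)\in\R_+\times\mathbb{S}^{n-1}$ the Riemannian measure on $\Sigma$ is $\varphi(r)^{n-1}\,dr\,d\omega$, and since $\sigma_n(r)^2\varphi(r)^{n-1}=r^{n-1}$ one gets $\|\sigma_n f\|_{L^2(\Sigma)}^2=\int\sigma_n^2|f|^2\varphi^{n-1}=\int|f|^2 r^{n-1}=\|f\|_{L^2(\R^n)}^2$ exactly as in Lemma~\ref{lem:normHs}, the angular integration being inert; as $\sigma_n$ is bounded above and below this is an isometric isomorphism onto $L^2(\Sigma)$.

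For the $H^1$-statement I would use the warped-product form of the Dirichlet norm. By the computation of Subsection~\ref{subsec:sepvar}, with the vierbein $e=\mathrm{diag}(1,\varphi\tilde e)$ one has $D_1=\partial_r$ while for $j\geq 2$ the covariant derivative $D_j=\tilde D_{j-1}+2i\varphi'\tilde e_{j-1}^{\,a}\tilde\Sigma_{0,a}$ differentiates only in $\omega$; since $h^{11}=1$, $h^{1j}=0$ and $h^{jk}=\varphi^{-2}\kappa^{jk}$ for $j,k\geq2$, we have $\|f\|_{\dot H^1(\Sigma)}^2=\int(|\partial_r f|^2+\varphi^{-2}\kappa^{jk}\langle D_j f,D_k f\rangle)\varphi^{n-1}$, and the same identity with $\varphi$ (and $\varphi'$ inside $D_j$) replaced by $r$ (and $1$) computes $\|f\|_{\dot H^1(\R^n)}^2$. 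Because $\sigma_n$ is radial and scalar, $D_j(\sigma_n f)=\sigma_n D_j f$ for $j\geq2$ and $\partial_r(\sigma_n f)=\sigma_n(\partial_r f+\tfrac{n-1}{2}\tfrac{\sigma'}{\sigma}f)$; substituting and using $\sigma_n^2\varphi^{n-1}=r^{n-1}$, the radial part is controlled by $\|f\|_{H^1(\R^n)}$ via boundedness of $\sigma'/\sigma$ (Lemma~\ref{lem:smoothnessofsigma}), while the angular part, after writing $D_j^{\Sigma}=D_j^{\R^n}+(\varphi'-1)E_j$ with $E_j:=2i\tilde e_{j-1}^{\,a}\tilde\Sigma_{0,a}$ a bounded $r$-independent matrix field on $\mathbb{S}^{n-1}$, splits as $\int\tfrac{r^2}{\varphi^2}\cdot\tfrac1{r^2}\kappa^{jk}\langle D_j^{\R^n}f,D_k^{\R^n}f\rangle r^{n-1}+\int(\tfrac{\varphi'-1}{r})^2\kappa^{jk}\langle E_jf,E_kf\rangle r^{n-1}$. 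Under Assumptions~\textbf{(A2)} the functions $r/\varphi=\sigma$, $\varphi/r$, and $(\varphi'-1)/r=\varphi_1/r+\varphi_1'$ are all bounded — the last because $\varphi_1\in C^\infty$ with $\varphi_1(0)=\varphi_1'(0)=0$ and $\|r\varphi_1'\|_\infty<\infty$ — and $\kappa^{jk}\langle E_jf,E_kf\rangle\lesssim|f|^2$ pointwise (the $E_j$ depend only on the fixed geometry of the sphere), so the first integral is $\lesssim\|f\|_{\dot H^1(\R^n)}^2$ and the second is $\lesssim\|f\|_{L^2(\R^n)}^2$. This gives $\|\sigma_n f\|_{H^1(\Sigma)}\lesssim\|f\|_{H^1(\R^n)}$, and the reverse bound $\|\sigma_n^{-1}g\|_{H^1(\R^n)}\lesssim\|g\|_{H^1(\Sigma)}$ follows by the same computation with $r$ and $\varphi$ interchanged (so with $\sigma$ and $\sigma^{-1}$ exchanged). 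Alternatively one could deduce this $H^1$-isomorphism from the relation $\sigma_n^{-1}\mathcal D_\Sigma\sigma_n=\mathcal D_{\R^n}+\mathcal V$ established earlier, the equivalence $\|\cdot\|_{H^1(\Sigma)}\sim\|(1+\mathcal D_\Sigma^2)^{1/2}\cdot\|_{L^2(\Sigma)}$ (valid since $\mathcal R_h\in L^\infty$ under \textbf{(A2)}), and the $\mathcal D_{\R^n}$-relative boundedness of $\mathcal V$.

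Finally, for $H^{a,b}$: since $\sigma_n$ is an isometric isomorphism $L^2(\R^n)\to L^2(\Sigma)$ and an isomorphism $H^1(\R^n)\to H^1(\Sigma)$, interpolation yields that it is an isomorphism $H^a(\R^n)\to H^a(\Sigma)$ for all $a\in[0,1]$; and since $(-\lap_{\mathbb{S}^{n-1}})^{b/2}$ acts only in $\omega$ it commutes, block by block in the spherical-harmonic decomposition, with the radial multiplier $\sigma_n$, so by the $L^2$-isometry $\|(-\lap_{\mathbb{S}^{n-1}})^{b/2}\sigma_n f\|_{L^2(\Sigma)}=\|\sigma_n(-\lap_{\mathbb{S}^{n-1}})^{b/2}f\|_{L^2(\Sigma)}=\|(-\lap_{\mathbb{S}^{n-1}})^{b/2}f\|_{L^2(\R^n)}$. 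Adding the two contributions, and the analogous bounds for $\sigma_n^{-1}$, shows that $\sigma_n:H^{a,b}(\R^n)\to H^{a,b}(\Sigma)$ is an isomorphism.

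The step that needs genuine care is the $H^1$-analysis: the covariant derivatives on $\Sigma$ and on $\R^n$ are not literally equal — they carry $\varphi'$ versus $1$ in the angular connection term — so one must isolate the bounded zeroth-order difference $(\varphi'-1)E_j$ and absorb it into the $L^2$-norm, which is precisely where the $(\varphi'-1)/r$ bound from Assumptions~\textbf{(A2)} enters, on top of the boundedness of $\sigma^{\pm1}$ and $\sigma'/\sigma$ already used in Lemma~\ref{lem:normHs}.
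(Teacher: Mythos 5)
Your proposal is correct and follows essentially the same route as the paper's proof: the same $L^2$ change-of-measure computation, the same decomposition $\mathbb D^\varphi_j=\mathbb D^r_j+2i(\varphi'-1)\tilde e^a_{\;j}\tilde\Sigma_{0,a}$ of the angular covariant derivatives with the zeroth-order remainder absorbed via the boundedness of $\varphi/r$, $r/\varphi$ and $(\varphi'-1)/\varphi$ under \textbf{(A2)}, and commutation of the radial multiplier $\sigma_n$ with the angular operators. The only difference is that you spell out the interpolation and the $(-\lap_{\mathbb S^{n-1}})^{b/2}$ commutation for the $H^{a,b}$ claim, which the paper leaves as an ``immediate consequence.''
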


\begin{proof} The fact that the multiplication by $\sigma_n$ is an isometry from $L^2(\R^n)$ to $L^2(\Sigma)$ in already present in Lemma \ref{lem:normHs}.

We have for all $F \in \mathcal C^\infty (\Sigma, \C^M)$, writing 
\[
F = \begin{pmatrix} f \\ g \end{pmatrix}
\]
with $f$ and $g$ in $\mathcal C^\infty (\Sigma, \C^{M/2})$
\[
h^{ij}\an{D_i F, D_j F}_{\C^M} = \an{\partial_r F ,\partial_r F}_{\C^M} + \frac1{\varphi^2} \Big( \tilde h^{ij} \an{\mathbb D^\varphi_i f, \mathbb D_j^\varphi f}_{\C^{M/2}} +  \tilde h^{ij} \an{\mathbb D_i^\varphi g, \mathbb D_j^\varphi g}_{\C^{M/2}}\Big) 
\]
where $\mathbb D^\varphi_j = \tilde D_j + 2i \varphi' \tilde e^a_{\; j} \tilde \Sigma_{0,a}$ with $\tilde D$  the covariant derivatives for spinors on the sphere and $\tilde h$ is the metric of the sphere. 

The fact that
\[
\|\an{\partial_r (\sigma_n F) ,\partial_r (\sigma_n F)}_{\C^M}\|_{L^1(\Sigma)} \sim \|\partial_r F\|_{L^2(\R^n)}^2
\]
is due to Lemma \ref{lem:normHs}.

We have 
\[
\mathbb D^\varphi_j = \mathbb D^r_j + 2i (\varphi' -1) \tilde e^a_{\; j}\tilde \Sigma_{0,a}.
\]
Thanks to the Cauchy-Schwarz inequality applied to the scalar product $x,y\mapsto h^{ij}x_iy_j$ we get
\begin{equation}\label{comparingDerivatives}
\sqrt{\frac1{\varphi^2}\tilde h^{ij} \an{\mathbb D^\varphi_i ,\mathbb D^\varphi_j f}_{\C^{M/2}}} \lesssim \frac{r}{\varphi} \sqrt{\frac1{r^2}\tilde h^{ij} \an{\mathbb D^r_i ,\mathbb D^r_j f}_{\C^{M/2}}} + \frac{|\varphi' -1|}{\varphi} \sqrt{\an{f,f}_{\C^{M/2}}}
\end{equation}
and conversely
\[
\sqrt{\frac1{r^2}\tilde h^{ij} \an{\mathbb D^r_i ,\mathbb D^r_j f}_{\C^{M/2}} }\lesssim \frac{\varphi}{r} \sqrt{\frac1{\varphi^2}\tilde h^{ij} \an{\mathbb D^\varphi_i ,\mathbb D^\varphi_j f}_{\C^{M/2}}} + \frac{|\varphi' -1|}{r} \sqrt{\an{f,f}_{\C^{M/2}}}
\]

Because $\mathbb D$ and $\sigma_n$ commute, we get
\[
\mathbb D^\varphi_j (\sigma_n f) = \sigma_n \mathbb D^r_j f + \sigma_n 2i (\varphi'-1)\tilde e^a_{\; j}\tilde \Sigma_{0,a}.
\]
To ensure that 
\[
\big\| \frac1{\varphi^2}\tilde h^{ij} \an{\mathbb D^\varphi_i \sigma_n f, \mathbb D_j^\varphi \sigma_n f}_{\C^{M/2}}\big\|_{L^1(\Sigma)}
\]
it is thus sufficient to prove that $ \frac{\varphi}{r}$, $\frac{r}{\varphi}$ and $\frac{\varphi'-1}{\varphi}$ are bounded. But $\varphi = r(1+\varphi_1)$ with $\varphi_1$ non negative, bounded, a $O(r)$ in $0$ and thus that $\varphi_1'$ is bounded, hence
\[
\frac{\varphi}{r} = 1+\varphi_1,\quad \frac{r}{\varphi} = \frac1{1+\varphi_1},\quad \frac{\varphi'-1}{\varphi} = \frac{\varphi_1}{r(1+\varphi_1)} + \frac{\varphi_1'}{1+\varphi_1}
\]
are bounded.
\end{proof}

\begin{lemma}\label{lem:LpNormBis} The multiplication by $\sigma_n$ is a continuous operator from $L^p(\R, W^{s,q}(\R^n))$ to $\sigma_n^{1-2/q} L^p(\R,W^{s,q}(\Sigma))$ for any $p\in [1,\infty]$, $q\in (1,\infty)$, $s\in [-1,1]$.
\end{lemma}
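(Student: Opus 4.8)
The plan is to reproduce the argument in the proof of Lemma \ref{lemsigma}, now carrying along the angular covariant derivatives on $\Sigma=\R_r^+\times\mathbb S^{n-1}$. As there, the $t$-variable is irrelevant, so it suffices to show that multiplication by $\sigma_n^{2/q}$ is continuous from $W^{s,q}(\R^n)$ to $W^{s,q}(\Sigma)$. For $s\in[0,1]$ I would reduce by interpolation to the endpoints $s=0,1$; for $s\in[-1,0)$ I would pass to adjoints, which reduces the claim to continuity of multiplication by $\sigma_n^{-2/q'}$ from $W^{-s,q'}(\Sigma)$ to $W^{-s,q'}(\R^n)$ (this uses $q\in(1,\infty)$), and then again to the endpoints. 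Hence it is enough to establish: (1) $\sigma_n^{2/q}$ is an isometry from $L^q(\R^n)$ to $L^q(\Sigma)$; (2) $\sigma_n^{2/q}$ is continuous from $W^{1,q}(\R^n)$ to $W^{1,q}(\Sigma)$; (3) $\sigma_n^{-2/q}$ is continuous from $W^{1,q}(\Sigma)$ to $W^{1,q}(\R^n)$. Point (1) is identical to the corresponding step in Lemma \ref{lemsigma}: since $\sigma_n$ is radial and $\sigma_n^2\varphi^{n-1}=r^{n-1}$, the change of measure gives $\|\sigma_n^{2/q}f\|_{L^q(\Sigma)}^q=\|f\|_{L^q(\R^n)}^q$.

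For (2) I would use the pointwise identity for the covariant gradient on $\Sigma$ derived in the proof of Lemma \ref{lem:normHsbis}: writing $F=\begin{pmatrix} f\\ g\end{pmatrix}$,
\[
h^{ij}\an{D_iF,D_jF}_{\C^M}=\an{\partial_rF,\partial_rF}_{\C^M}+\frac1{\varphi^2}\Big(\tilde h^{ij}\an{\mathbb D^\varphi_if,\mathbb D^\varphi_jf}_{\C^{M/2}}+\tilde h^{ij}\an{\mathbb D^\varphi_ig,\mathbb D^\varphi_jg}_{\C^{M/2}}\Big),
\]
together with the same identity on $\R^n$ with $\varphi$ replaced by $r$ and $\mathbb D^\varphi$ by $\mathbb D^r$. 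Since $\sigma_n$ is radial it commutes with each $\mathbb D^\varphi_j$, while $\partial_r(\sigma_n^{2/q}F)=\sigma_n^{2/q}\partial_rF+\frac{n-1}{q}\frac{\sigma'}{\sigma}\sigma_n^{2/q}F$ with $\sigma'/\sigma$ bounded by Lemma \ref{lem:smoothnessofsigma}. Combining this with the pointwise comparison \eqref{comparingDerivatives} between $\mathbb D^\varphi$- and $\mathbb D^r$-gradients and the boundedness of $\varphi/r$, $r/\varphi$, $(\varphi'-1)/\varphi$ — which follows from Assumptions \textbf{(A2)} exactly as in the last lines of the proof of Lemma \ref{lem:normHsbis} — I would obtain a pointwise bound on $\sqrt{h^{ij}\an{D_i(\sigma_n^{2/q}F),D_j(\sigma_n^{2/q}F)}}$ by $\sigma_n^{2/q}$ times a finite sum of square-root terms built from $\partial_rF$, the $\mathbb D^rf$, $\mathbb D^rg$ gradients on $\R^n$, and $|F|$. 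Applying the $L^q(\R^n)\to L^q(\Sigma)$ isometry of (1) to each term yields $\|\sigma_n^{2/q}F\|_{W^{1,q}(\Sigma)}\lesssim_\varphi\|F\|_{W^{1,q}(\R^n)}$. Point (3) is entirely symmetric: one exchanges the roles of $\Sigma$ and $\R^n$, uses the converse inequality in \eqref{comparingDerivatives}, and notes that $(\sigma_n^{-2/q})'\sigma_n^{2/q}=-\frac{n-1}{q}\frac{\sigma'}{\sigma}$ is bounded.

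The genuinely new ingredient compared to Lemma \ref{lemsigma} is the $\C^M$-valued angular part, and this is where I expect the only (mild) difficulty: one must verify that conjugating the flat spinor gradient by the radial weight produces nothing worse than the zeroth-order error $2i(\varphi'-1)\tilde e^a_j\tilde\Sigma_{0,a}$ already accounted for in Lemma \ref{lem:normHsbis}, and that the mismatch between the prefactors $\varphi^{-2}$ and $r^{-2}$ is absorbed by the bounded ratio $r/\varphi$. Once these pointwise comparisons are in hand, the $L^q$ estimate follows exactly as in the radial case, so this is bookkeeping rather than a real obstacle.
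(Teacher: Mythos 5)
Your proposal is correct and follows essentially the same route as the paper: the same reduction (via interpolation and duality) to the three claims about $L^q$ and $W^{1,q}$, the same use of the $L^q(\R^n)\to L^q(\Sigma)$ isometry, the commutation of $\sigma_n^{2/q}$ with the angular covariant derivatives, the Leibniz rule for $\partial_r$ with $\sigma'/\sigma$ bounded, and the pointwise comparison \eqref{comparingDerivatives} together with the boundedness of $r/\varphi$, $\varphi/r$ and $(\varphi'-1)/\varphi$. Nothing essential differs from the paper's argument.
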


\begin{proof}
As in Lemma \ref{lemsigma}, we reduce our proof to the proof of, for all $q\in (1,\infty)$, \begin{enumerate}
\item the multiplication by $\sigma_n^{2/q}$ is an isometry from $L^q(\R^n)$ to $L^q (\Sigma)$,
\item the multiplication by $\sigma_n^{2/q}$ is continuous from $W^{1,q}(\R^n)$ to $W^{1,q}(\Sigma)$,
\item the multiplication by $\sigma_n^{-2/q}$ is continuous from $W^{1,q}(\Sigma)$ to $W^{1,q}(\R^n)$.
\end{enumerate}

(1) The proof of (1) is similar to what we have already done in the proof of Lemma \ref{lemsigma}.

(2) With the same notations as in the proof of Lemma \ref{lem:normHsbis}, and keeping in mind (2) in Lemma \ref{lemsigma}, it remains to prove (with a slight abuse of notation) that for all $f\in W^{1,q}(\R^n)$, 
\[
\big\| \sqrt{\frac1{\varphi^2} \tilde h^{ij} \an{\mathbb D_i^\varphi (\sigma_n^{2/q} f), \mathbb D_j^\varphi (\sigma_n^{2/q} f)}_{\C^{M/2}}}\big\|_{L^q(\Sigma)} \lesssim \|f\|_{W^{1,q}(\R^n)}.
\]
But because of (1) and the fact that $\sigma_n^{2/q}$ and $\mathbb D^\varphi$ commute, it sufficient to prove that
\[
\big\| \sqrt{\frac1{\varphi^2} \tilde h^{ij} \an{\mathbb D_i^\varphi f, \mathbb D_j^\varphi f}_{\C^{M/2}}}\big\|_{L^q(\R^n)} \lesssim \|f\|_{W^{1,q}(\R^n)}.
\]
We now use the inequality \eqref{comparingDerivatives} and the fact that $\frac{r}{\varphi}$ and $\frac{\varphi' -1}{\varphi}$ are bounded to get the result.

(3) Similar to (2).
\end{proof}

Therefore, combining Proposition \ref{prop:LPfirststep} with Lemma \ref{lem:LpNormBis} eventually yields the Proof of Theorem \ref{teo2}.


\appendix
\section{Comments on admissible manifolds}

It is natural to ask wether conditions \eqref{crucialcond} are fullfilled by other natural choices of the function $\varphi(r$), as e.g. ${\varphi(r)}=\sinh(r)$ (which corresponds to {\em hyperbolic spaces}), or ${\varphi}(r)=r+r^2+\dots+ r^p$ with $p>2$ (manifolds with polynomial growth). It turns out that with both these choices conditions \eqref{crucialcond} are only satisfied for large $r$; more precisely, the following result holds

\begin{proposition}\label{verification2} 
Let $(\M,g)$ defined by $\M=\mathbb{R}_t\times \Sigma$, with $(\Sigma,\sigma)$ a warped product manifold with metric given by \eqref{warped}, and let ${\varphi(r)}=\sinh(r)$ or ${\varphi}(r)=r+r^2+\dots+ r^p$ with $p\in\N$ and $p>2$. Then, condition \eqref{crucialcond} is not satisfied.

\end{proposition}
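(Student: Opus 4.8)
The plan is to locate, for a suitably chosen $\mu$, a single radius at which the quantity $4r^2V_{-\mu}+1$ that enters the definition \eqref{crucialass} of $\delta_\varphi(-\mu)$ is already negative, so that $\delta_\varphi(-\mu)<0$ and hence \eqref{crucialcond} fails. First I would record that the two decay conditions in \eqref{crucialcond} are harmless for both profiles: for $\varphi(r)=\sinh(r)$ one has $V_\mu=\mu(\mu+\cosh r)/\sinh^2 r\sim 2\mu e^{-r}$ as $r\to\infty$ and $r^2V_\mu\to\mu(\mu+1)$ as $r\to 0$, so $\lim_{r\to\infty}V_\mu=0$ and $4r^2V_\mu\in L^\infty$; for $\varphi(r)=r+r^2+\dots+r^p$ one has $V_\mu\sim \mu p\,r^{-p-1}$ at infinity and again $r^2V_\mu$ bounded near $0$. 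Moreover, since $\varphi'>0$ we have $V_\mu>0$ for $\mu>0$, so the term $4r^2V_\mu+1$ is $>1$ everywhere and $\delta_\varphi(\mu)$ poses no obstruction; the whole issue is thus $\delta_\varphi(-\mu)$, for which $V_{-\mu}(r)=\mu\big(\mu-\varphi'(r)\big)/\varphi(r)^2$ may change sign. It therefore suffices to exhibit, for some $\mu$ in the (discrete, unbounded) spectrum of $\mathcal D_{\K^{n-1}}$, one radius at which $4r^2V_{-\mu}+1<0$.

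The crux is the choice of test radius. For both $\varphi(r)=\sinh(r)$ and $\varphi(r)=r+r^2+\dots+r^p$ the derivative $\varphi'$ is continuous and strictly increasing on $[0,\infty)$ with $\varphi'(0)=1$ and $\varphi'(r)\to\infty$; hence for any $\mu>\tfrac12$ in the spectrum of $\mathcal D_{\K^{n-1}}$ — and such $\mu$ exist because that spectrum is unbounded — there is a (unique) $r_\mu>0$ with $\varphi'(r_\mu)=2\mu$. Evaluating $V_{-\mu}$ at $r_\mu$ gives $V_{-\mu}(r_\mu)=\mu(\mu-2\mu)/\varphi(r_\mu)^2=-\mu^2/\varphi(r_\mu)^2$, and so, using once more $2\mu=\varphi'(r_\mu)$,
\[
4r_\mu^2\,V_{-\mu}(r_\mu)+1 \;=\; 1-\frac{4\mu^2 r_\mu^2}{\varphi(r_\mu)^2}\;=\;1-\left(\frac{r_\mu\,\varphi'(r_\mu)}{\varphi(r_\mu)}\right)^{2}.
\]
Thus the entire matter reduces to the pointwise inequality $r\varphi'(r)>\varphi(r)$ at $r=r_\mu$, i.e.\ to the strict superlinearity of $\varphi$.

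This last inequality is elementary for both profiles. For $\varphi(r)=\sinh(r)$ it reads $r\cosh r>\sinh r$ for $r>0$, which follows from $\tfrac{d}{dr}\big(r\cosh r-\sinh r\big)=r\sinh r>0$ together with $r\cosh r-\sinh r=0$ at $r=0$ (equivalently, $\tanh r<r$). For $\varphi(r)=r+r^2+\dots+r^p$ one computes $r\varphi'(r)-\varphi(r)=\sum_{k=2}^{p}(k-1)r^{k}$, which is strictly positive for $r>0$. In either case $\big(r_\mu\varphi'(r_\mu)/\varphi(r_\mu)\big)^2>1$, hence $4r_\mu^2V_{-\mu}(r_\mu)+1<0$, so $\inf_{r>0}\big(4r^2V_{-\mu}+1\big)<0$, and therefore $\delta_\varphi(-\mu)<0$; this contradicts the requirement $\delta_\varphi(-\mu)>0$ in \eqref{crucialcond}, which is thereby violated.

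I do not anticipate a genuine obstacle in this argument; the one point needing care is that the test radius $r_\mu$ exists only when $2\mu$ lies in the range of $\varphi'$, which is why the statement concerns $\mu$ in the spectrum (where one may always take $\mu>1/2$) rather than arbitrary real parameters, and why one must invoke the unboundedness of $\mathrm{Sp}(\mathcal D_{\K^{n-1}})$. The same computation in fact shows that \eqref{crucialcond} fails for \emph{every} warped product whose profile $\varphi$ is strictly superlinear with $\varphi'$ an increasing surjection onto $[1,\infty)$, which isolates the flat-like case $\varphi'\equiv\mathrm{const}$ as essentially the only borderline that survives; keeping track of where $4r^2V_{-\mu}+1$ changes sign yields the quantitative ``only for large $r$'' statement alluded to before Proposition \ref{verification2}.
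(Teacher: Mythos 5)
Your argument is correct and is essentially the paper's own: the paper reduces $4r^2V_{\pm\mu}+1>0$ for all $\mu$ to the discriminant condition $\bigl(\varphi'/\varphi\bigr)^2<1/r^2$, i.e.\ $r\varphi'<\varphi$, and notes this fails for both profiles; your choice of $r_\mu$ with $\varphi'(r_\mu)=2\mu$ is exactly the vertex of that quadratic in $\mu$, so the two computations coincide. Your write-up is in fact more complete than the paper's sketch, since you also verify that the decay conditions $\lim_{r\to\infty}V_\mu=0$ and $4r^2V_\mu\in L^\infty$ do hold and carefully track which $\mu$ and which sign cause the failure.
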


\begin{proof}

It is quite immediate to see that condition
$$
4r^2{V}_{\mu} + 1>0\Leftrightarrow 4r^2\mu^2\pm 4r^2\mu \cosh(r)+\sinh(r)^2>0,
$$
is true for any $\mu$ if and only if 
$$
\left(\frac{\varphi'(r)}{\varphi(r)}\right)^2<\frac1{r^2},
$$
and this last condition is not satisfied by the choices $\varphi(r)=\sinh(r)$ or $\varphi(r)=r+r^2+\dots+r^p$.  We omit the details.
\end{proof}

\begin{remark}
As a matter of fact, it might be possible to prove that with the choices of $\varphi(r)$ of Proposition \ref{verification2},  condition \eqref{crucialcond} is actually satisfied for $r$ larger than a sufficiently large $R=R(\mu)$; as a consequence, it would be tempting to consider manifolds that are flat inside some balls, and then present different asymptotic behaviors (like, for instance, asymptotically hyperbolic manifolds). These cases would correspond to choosing a function $\varphi(r)\in C^\infty(\R^+)$ that takes the form
\begin{equation}\label{structphi}
\varphi(r)=
\begin{cases}
r \qquad {\rm if}\: r\leq R,\\
\psi(r)\quad {\rm if}\: R\leq r\leq 2R,\\
\sinh(r)\quad {\rm if}\: r> 2R,
\end{cases}
\end{equation}
(and analogous in the case of manifolds with polynomial growth). The existence of such a function is quite standard; on the other hand, we are not able to show that condition \eqref{crucialcond} is satisfied everywhere. In any case, the fact that the quantity $R$ will depend on $\mu$ makes the analysis in these cases not so relevant from a geometrical point of view, and therefore we prefer to leave the study of these other geometries to future investigations.
\end{remark}

\end{document}